\def\titlerunning#1{\gdef\titrun{#1}}
\def\author#1{\gdef\autrun{\def\and{\unskip, }#1}\gdef\@author{#1}}
\def\address#1{{\def\and{\\\hspace*{18pt}}\renewcommand{\thefootnote}{}%
\footnote {#1}}%
\markboth{\autrun}{\titrun}} \makeatother
\def\email#1{e-mail: #1}
\def\subjclass#1{{\renewcommand{\thefootnote}{}%
\footnote{\emph{Mathematics Subject Classification (2010):} #1}}}
\def\keywords#1{\par\medskip
\noindent\textbf{Keywords.} #1}
\def\qed{\hfill$\square$\smallskip}
\newtheorem{theorem}{Theorem}[section]
\newtheorem{corollary}[theorem]{Corollary}
\newtheorem{definition}[theorem]{Definition}
\newtheorem{remark}[theorem]{Remark}
\newtheorem{lemma}[theorem]{Lemma}
\newtheorem{proposition}[theorem]{Proposition}
\numberwithin{equation}{section}
\numberwithin{equation}{section}
\begin{document}
\baselineskip=17pt

\titlerunning{***}

\title{Existence results for coupled Dirac systems via Rabinowitz-Floer theory
}

\author{Wenmin Gong and Guangcun Lu}

\date{June 2, 2016}

\maketitle

\address{F1. Gong: School of Mathematical Sciences, Beijing Normal University,
    Beijing 100875, The People's Republic
 of China; \email{20133110010@mail.bnu.edu.cn}}

\address{F2. Lu: School of Mathematical Sciences, Beijing Normal University,
Laboratory of Mathematics  and Complex Systems,  Ministry of
  Education,    Beijing 100875, The People's Republic
 of China; \email{gclu@bnu.edu.cn}; Partially supported by the NNSF (grant no. 10971014 and 11271044 ) of China.}

\subjclass{Primary~53C27, 57R58, 58E05, 58J05}

\begin{abstract}
In this paper, we construct the Rabinowitz-Floer homology for the coupled Dirac system
\begin{equation*}
\left\{ \begin{aligned}
Du=\frac{\partial H}{\partial v}(x,u,v)\hspace{4mm} {\rm on}
\hspace{2mm}M,\\
Dv=\frac{\partial H}{\partial u}(x,u,v)\hspace{4mm} {\rm on}
\hspace{2mm}M,
\end{aligned} \right.
\end{equation*}
where $M$ is an $n$-dimensional compact Riemannian spin manifold, $D$ is the Dirac operator on $M$, and $H:\Sigma
M\oplus \Sigma M\to \mathbb{R}$ is a real valued superquadratic function of class $C^1$ with subcritical growth rates. Solutions of this system can be obtained from the critical points of a Rabinowitz-Floer functional on a product space of suitable fractional Sobolev spaces. In particular, we consider the $S^1$-equivariant $H$ that includes a nonlinearity of the form
$$
H(x,u,v)=f(x)\frac{|u|^{p+1}}{p+1}+g(x)\frac{|v|^{q+1}}{q+1},
$$
where $f(x)$ and $g(x)$ are strictly positive continuous functions on $M$, and  $p>1,q>1$ satisfy
$$
\frac{1}{p+1}+\frac{1}{q+1}>\frac{n-1}{n}.
$$
We establish the existence of a nontrivial solution by computing the Rabinowitz-Floer homology in the Morse-Bott situation.

\keywords{Coupled Dirac system; Rabinowitz-Floer homology; Strongly indefinite functionals}
\end{abstract}

\section{Introduction and main results}\label{sec:1}
\setcounter{equation}{0}

Let $(M,g)$ be an $n$-dimensional compact
oriented Riemannian manifold equipped with a spin structure
$\rho:P_{{\rm Spin}(M)}\rightarrow P_{{\rm SO}(M)}$, and let $\Sigma
M=\Sigma(M,g)=P_{{\rm Spin}(M)} \times_{\sigma}\Sigma_n$ denote the
complex spinor bundle on $M$. The latter is a complex vector bundle of
rank $2^{[n/2]}$ endowed with the spinorial Levi-Civita connection
$\nabla$ and a pointwise Hermitian scalar product.
In the following, let $\langle\cdot,\cdot\rangle$ always denote the real part of the
Hermitian product on $\Sigma M$. It induces a natural
 inner product $(u,v)_{L^2}=\int_M\langle u(x), v(x)\rangle dx$
 on the space $C^\infty(M,\Sigma M)$ of all $C^\infty$-sections of the bundle $\Sigma M$,
 where $dx$ is the Riemannian measure of $g$.
Denote by $L^2(M,\Sigma M)$ the completion Hilbert space of $C^\infty(M,\Sigma M)$.
The Dirac operator is an elliptic
differential operator of order one,  $D=D_g:C^{\infty}(M,\Sigma
M)\rightarrow C^{\infty}(M,\Sigma M)$, locally given by
$D\psi=\sum^n_{i=1}e_i\cdot\nabla_{e_i}\psi$ for $\psi\in
C^{\infty}(M,\Sigma M)$ and a local $g$-orthonormal frame
$\{e_i\}^n_{i=1}$ of the tangent bundle $TM$.
Consider Whitney direct sum  $\Sigma M\oplus\Sigma M$ of  $\Sigma M$ and itself,
and write a point of it as $(x,\xi,\zeta)$, where $x\in M$ and $\xi,\zeta\in \Sigma_x M$.  
Nolinear Dirac equations arise in many interesting problems in geometry and physics including Dirac-harmonic maps describing the generalized Weierstrass representation of surfaces in three-manifolds \cite{Fri} and the supersymmetric nonlinear sigma model in quantum field theory \cite{CJL,CJW1,CJW2}. In this paper we will
construct the Rabinowitz-Floer homology to study the following
system of the coupled semilinear Dirac equations:
\begin{equation} \label{eq:1.1}
\left\{ \begin{aligned}
Du=\frac{\partial H}{\partial v}(x,u,v)\hspace{4mm} {\rm on}
\hspace{2mm}M,\\
Dv=\frac{\partial H}{\partial u}(x,u,v)\hspace{4mm} {\rm on}
\hspace{2mm}M,
\end{aligned} \right.
\end{equation}
where $u,v\in C^1(M,\Sigma M)$ are spinors and $H:\Sigma M\oplus\Sigma M\to \mathbb{R}$ is a continuous  function. (\ref{eq:1.1})  is the Euler-Lagrange equation of
the functional
\begin{equation} \label{eq:1.2}
\mathfrak{L}_H(u,v)=\int_M\big(\langle Du,v\rangle-H(x,u,v)\big)dx.
\end{equation}
 The functional $\mathfrak{L}_H$ is strongly indefinite since the spectrum of the
operator $D$ is unbounded from below and above.

The problem (\ref{eq:1.1}) can be viewed as a spinorial analogue of other strongly indefinite variational problems such as infinite dynamical systems \cite{BaD1,BaD2} and elliptic systems \cite{AnV,Fed}, and in quantum physics it describes a coupled fermionic fields, and this is our main motivation for its study. A typical way to deal with such problems is the min-max method of Benci and Rabinowitz \cite{BeR}, including the mountain pass theorem, linking arguments and so on. For example, Isobe \cite{Iso1,Iso2} and authors \cite{GoL}
used this method to study the existence of solutions of generalized nonlinear Dirac equations
$Du=H_u(x, u)$ on a compact oriented spin Riemannian manifold.
Another way to solve them is homological approach
by using Morse theory or Floer homology as in \cite{Abb,BaL,Iso3, KrS}.
Inspired by the Rabinowitz-Floer homological method in \cite{AbS,AlF,CiF,CFA}, Maalaoui \cite{Maa}
studied the existence of solutions of the following subcritical Dirac equation
\begin{equation} \label{eq:1.3}
Du=|u|^{p-1}u \hspace{4mm}{\rm on}
\hspace{2mm}M,
\end{equation}
where $1<p<\frac{n+1}{n-1}$, by constructing Rabinowitz-Floer homology.
Recently, he also extended his results to a class of non-linear problems
with the so-called starshaped potential \cite{MaM}.
 Comparing these two methods, it seems that the homological approach is more ``intrinsic" in the sense that the topology of the space of solutions is invariant under perturbations of the subcritical exponent $p$.

In the following we assume that two real numbers $p,q>1$ satisfy
\begin{equation}\label{eq:1.4}
\frac{1}{p+1}+\frac{1}{q+1}>\frac{n-1}{n}.
\end{equation}
It is not hard to verify that we can choose a real number $s\in (0,1)$ such that
\begin{equation}\label{eq:1.4.1}
p<\frac{n+2s}{n-2s}\quad\hbox{and}\quad q<\frac{n+2-2s}{n+2s-2}.
\end{equation}
On nonlinearity $H$, we make the following hypotheses:

(\textbf{H0}) $H\in C^0(\Sigma M\oplus\Sigma M, \mathbb{R})$ is
 $C^1$ in the fiber direction, and $C^2$ in the fiber direction of
 $\Sigma M\oplus\Sigma M\setminus\{0\}$.

(\textbf{H1}) There exists a constant $c_0\in (0, 2)$  such that
\begin{equation} \label{eq:1.5}
\langle H_u(x,u,v),u\rangle+\langle H_v(x,u,v),v\rangle \geq2 H(x,u,v)- c_0
\end{equation}
for all $(x,u,v)$.

(\textbf{H2}) There exists a constant $c_1>0$ such that
\begin{gather}
\big|H_u(x,u,v)\big|\leq c_1\left(1+|u|^p+|v|^{\frac{p(q+1)}{p+1}}\right),\label{eq:1.6}\\
\big|H_v(x,u,v)\big|\leq c_1\left(1+|u|^{\frac{q(p+1)}{q+1}}+|v|^q\right). \label{eq:1.7}
\end{gather}

(\textbf{H3}) There exist constants $\delta>0$ and $c_2>0$ such that
for $|z|>\delta$ with $z=(u,v)$,
\begin{eqnarray} \label{eq:1.8}
&&\big|H_{uu}(x,u,v)\big|\leq c_2(1+|u|^{p-1}), \label{H3:1}\\ &&\big|H_{vv}(x,u,v)\big|\leq c_2(1+|v|^{q-1}),\label{H3:2}\\&&\big|H_{uv}(x,u,v)\big|\leq c_2,\quad\big|H_{vu}(x,u,v)\big|\leq c_2\label{H3:3}.
\end{eqnarray}

(\textbf{H4}) For any $a\in \mathbb{R}$, the map
$$\mathcal{T}:E_s\to L^{\frac{2n}{n+2s}}(M,\Sigma M)\times L^{\frac{2n}{n+2(1-s)}}(M,\Sigma M)$$
given by $\mathcal{T}(z)=\big(H_u(x,z),H_v(x,z)\big)^T$,
is bounded on the set
$$
\Sigma_a(H)=\Big\{z\in E_s\;\Big|\;\int_M H(x,z(x))dx\leq a\Bigr\},
$$
where $E_s=H^s(M,\Sigma M)\times H^{1-s}(M,\Sigma M)$,
see (\ref{eq:2.3}) for its definition.

{\it Note}: Since the equation (\ref{eq:1.1})
and the following assumptions (\textbf{H}2)-(\textbf{H}4) are invariant after
adding a constant to $H$, the assumption that $c_0<2$ in (\textbf{H}1) is unnecessary.
We assume it  so that the proof of Proposition~\ref{prop:3.1} becomes simple.

Consider the following typical examples  satisfying the above $({\bf H}0)-({\bf H}4)$,
\begin{equation} \label{eq:1.9}
H(x,u,v)=f(x)\frac{|u|^{p+1}}{p+1}+g(x)\frac{|v|^{q+1}}{q+1},
\end{equation}
where $f(x)$ and $g(x)$ are strictly positive continuous functions on $M$. Then~(\ref{eq:1.1}) reduces to the following form
\begin{equation} \label{eq:1.10}
\left\{ \begin{aligned}
Du=g(x)|v|^{p-1}v\hspace{4mm} {\rm on}
\hspace{2mm}M,\\
Dv=f(x)|u|^{q-1}u\hspace{4mm} {\rm on}
\hspace{2mm}M.
\end{aligned} \right.
\end{equation}
Note that $\int_MH(x,u,v)dx$ is not well-defined on the Hilbert space $H^{\frac{1}{2}}(M,\Sigma M)\times H^{\frac{1}{2}}(M,\Sigma M)$
unless we make a stronger hypothesis on the exponents $p,q$ as in \cite{Fed}.
The analytic framework in \cite{MaM} did not work well for our problem and so
Maalaoui and Martino's result cannot directly lead to the existence of the solutions of (\ref{eq:1.10}).
To overcome this difficulty, inspired by the ideas of
    Hulshof and Van der Vorst~\cite{HuV},
    we consider the following
    well-defined functional
   \begin{equation} \label{eq:1.11}
\mathcal{A}_H(u,v,\lambda)=\int_M\langle Du,v\rangle dx-\lambda\int_M\big(H(x,u,v)-1\big)dx
\end{equation}
 on a fractional Sobolev space $H^s(M,\Sigma M)\times H^{1-s}(M,\Sigma M)\times \mathbb{R}$ as
  an analogue of one  used by Rabinowitz ~\cite{Rab}.
From \S~\ref{sec:2} to \S~\ref{sec:6} we shall construct,  under the suitable assumptions on $H$,
 the  Rabinowitz-Floer homology in Morse and Morse-Bott situations, respectively.
For the latter case, that is,  the critical manifold consists of connected components with different dimensions,
in contrast to breaking the symmetry via a small perturbation to construct the $S^1$-equivalent homology as in \cite{Maa},
we shall follow \cite{Bou, BoO} construct the Morse-Bott homology as follows:  choose  a Morse function on the critical manifold
and define the chain complex to be the $\mathbb{Z}_2$-vector space generated by the critical points of this Morse function,
 while the boundary operator is defined by counting flow lines with cascades. The advantage of this method is that
there exists a nice grading for such a complex and the Rabinowitz-Floer homology for $H_0(x,u,v)=\frac{1}{2}(|u|^2+|v|^2)$
can be partially worked out
in Section~\ref{sec:7}. Based on these  we  prove the following result.

\begin{theorem}\label{th:1.1}
Assume that $n\geq 2$ and $0\notin {\rm Spec}(D)$.
Problem (\ref{eq:1.10}) has at least a nontrival solution $(u,v)\in C^1(M,\Sigma M)\times C^1(M,\Sigma M)$.
\end{theorem}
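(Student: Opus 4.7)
The plan is to specialize the general Rabinowitz-Floer machinery of Sections~\ref{sec:2}--\ref{sec:6} to the concrete $H$ in (\ref{eq:1.9}) and then read off existence from the Morse-Bott computation in Section~\ref{sec:7}. I would first pick $s\in(0,1)$ satisfying (\ref{eq:1.4.1}), which is possible exactly because (\ref{eq:1.4}) holds, and verify that (\ref{eq:1.9}) satisfies (\textbf{H}0)--(\textbf{H}4) on $E_s=H^s(M,\Sigma M)\times H^{1-s}(M,\Sigma M)$. The growth bounds in (\textbf{H}2) and (\textbf{H}3) amount to the subcritical embeddings $H^s\hookrightarrow L^{p+1}$ and $H^{1-s}\hookrightarrow L^{q+1}$, while (\textbf{H}1) follows from the pointwise identity
\[
\langle H_u,u\rangle+\langle H_v,v\rangle-2H(x,u,v)=(p-1)\frac{f(x)|u|^{p+1}}{p+1}+(q-1)\frac{g(x)|v|^{q+1}}{q+1}\geq 0,
\]
so (\textbf{H}1) holds with $c_0=0$ because $p,q>1$ and $f,g>0$ on the compact $M$.

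With the hypotheses checked, the previous sections produce a well-defined $C^1$ Rabinowitz-Floer functional
\[
\mathcal{A}_H(u,v,\lambda)=\int_M\langle Du,v\rangle\,dx-\lambda\int_M\bigl(H(x,u,v)-1\bigr)\,dx
\]
on $E_s\times\mathbb{R}$, whose critical points $(u,v,\lambda)$ satisfy $Du=\lambda H_v(x,u,v)$, $Dv=\lambda H_u(x,u,v)$ together with the constraint $\int_M H(x,u,v)\,dx=1$. The constraint rules out the trivial pair $(u,v)=(0,0)$, and since $0\notin\mathrm{Spec}(D)$ the value $\lambda=0$ is impossible as it would force $Du=Dv=0$ and hence $u=v=0$. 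Because $H$ is positively homogeneous in $u$ and $v$ separately, a critical point with $\lambda\neq 0$ can then be rescaled as $(u,v)\mapsto(|\lambda|^{a}u,|\lambda|^{b}v)$ with $a,b$ chosen to solve the linear system $a-bq=-1,\ b-ap=-1$ (solvable since $pq>1$), producing a nontrivial solution of (\ref{eq:1.10}).

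To show the critical set of $\mathcal{A}_H$ is nonempty I would compute the Rabinowitz-Floer homology in the Morse-Bott setting. The nonlinearity (\ref{eq:1.9}) is $S^1$-equivariant, so the critical manifolds appear as unions of $S^1$-orbits together with possibly higher-dimensional components coming from degeneracies in $\mathrm{Spec}(D)$; this is exactly the setting handled by the cascade construction of Bourgeois--Oancea used in Sections~\ref{sec:2}--\ref{sec:6}. The key input is the partial computation in Section~\ref{sec:7} of the Rabinowitz-Floer homology of the quadratic model $H_0(x,u,v)=\frac{1}{2}(|u|^{2}+|v|^{2})$, which also satisfies (\textbf{H}0)--(\textbf{H}4) and whose critical set is explicitly described by the spectrum of $D$ (well-defined since $0\notin\mathrm{Spec}(D)$ and $n\ge 2$). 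Homotopy invariance of the Rabinowitz-Floer homology, established in the earlier sections via uniform Palais--Smale and $L^{\infty}$ bounds along homotopies within the class (\textbf{H}0)--(\textbf{H}4), then transfers this nontriviality from $H_0$ to our $H$, forcing the critical set of $\mathcal{A}_H$ to be nonempty.

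Finally I would close the argument by elliptic bootstrapping: applied to $Du=\lambda H_v$ and $Dv=\lambda H_u$, standard Schauder-type estimates for the Dirac operator combined with the subcritical growth of $H_u,H_v$ raise the weak solution $(u,v)\in E_s$ to $C^1(M,\Sigma M)\times C^1(M,\Sigma M)$, and the rescaled pair then solves (\ref{eq:1.10}). The main obstacle I anticipate is the homotopy-invariance step: the strongly indefinite nature of $\mathcal{A}_H$, the presence of the Lagrange multiplier $\lambda$, and the asymmetric fractional splitting $H^{s}\times H^{1-s}$ force one to verify uniform compactness of the cascade moduli spaces along the family of nonlinearities interpolating between $H_0$ and $H$; combined with the Morse-Bott degeneracies coming from Dirac eigenspaces, ensuring that the cascade differential actually produces a nontrivial group surviving the continuation is the delicate piece of the proof.
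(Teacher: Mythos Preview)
Your proposal is correct and follows the paper's route: verify (\textbf{H}0)--(\textbf{H}4) for (\ref{eq:1.9}), compute $RHF_{-1}(H_0)=\mathbb{Z}_2$ for the quadratic $H_0$ via the Morse--Bott cascade construction, transfer nontriviality to $RHF_{-1}(H)$ by continuation, extract a nontrivial critical point $(u^*,v^*,\lambda^*)$ of $\mathcal{A}_H$, and rescale by powers of $\lambda^*$ to a $C^1$ solution of (\ref{eq:1.10}). One minor clarification: the paper runs the Morse--Bott computation only for $H_0$, not for the given $H$ (one does not know $\mathcal{A}_H$ is Morse--Bott); for $H$ it instead approximates by Morse perturbations $H_n$ with $RHF_{-1}(H_n)\neq 0$, obtains critical points $(z_n,\lambda_n)$, and passes to the limit using the $(PS)_c$ property---a step your sketch implicitly folds into ``forcing the critical set of $\mathcal{A}_H$ to be nonempty.''
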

\noindent The same method can also be used to derive analogue existence results for a larger class of  homogeneous nonlinearities $H$.
Of course, if $H\in C^2(\Sigma M\oplus\Sigma M)$
satisfies $({\bf H}1)-({\bf H}4)$ then the functional $\mathfrak{L}_H$ in (\ref{eq:1.2}) is of class $C^2$ by Proposition~\ref{prop:2.1}.
The methods in \cite{Iso1} can be used to prove some results on existence and
multiplicity for solutions of (\ref{eq:1.1}) under certain further assumptions on $H$.
 We can use the saddle point reduction to study it as done in \cite{YJLu} for Dirac equations.
 These will be given in other places.\


{\bf Organization of the paper}. In section~2, we define a Rabinowitz-Floer functional on a suitable
product space of fractional Sobolev spaces, and the perturbed gradient flow. The aim of Section~3 is to prove the $(PS)_c$ condition and boundedness of the perturbed flows. In section~4, we define and study the relative index and moduli space of trajectories. Section~5 constructs the Rabinowitz Floer homology in Morse and Morse-Bott situations, and also proves continuation invariance of the homology.
In section~6, we establish the transversality result. Finally,  we compute the Rabinowitz-Floer homology and prove Theorem~\ref{th:1.1}
in section~7.


\section{The analytic framework}\label{sec:2}
\setcounter{equation}{0}
Let $(M, g)$ be as in Section~\ref{sec:1}.  The Dirac operator $D = D_g:C^\infty(M,\Sigma M)\to C^\infty(M,\Sigma M)$ is essentially self-adjoint in $L^2(M,\Sigma M)$ and its spectrum consists of an unbounded sequence of real numbers  (cf.~\cite{Fri,LaM}).
The well known Schr\"{o}dinger-Lichnerowicz formula implies that
 all eigenvalues of $D$ are nonzero  if $M$ has positive scalar curvature. Hereafter, {\it we assume}:
 $$
 0\notin {\rm spec}(D)\quad\hbox{and}\quad \int_M dx=1\quad\hbox{i.e., the volume of $(M,g)$  equals to $1$}.
 $$
  (The second  assumption  is only for simplicity, it is actually unnecessary for our result!).

Let $(\psi_k)_{k=1}^\infty$ be a complete $L^2$- orthonormal basis of eigenspinors corresponding to the eigenvalues $(\lambda_k)_{k=1}^\infty$ counted with multiplicity such that $|\lambda_k|\to \infty$ as $k\to \infty$.
For each $s\geq0$, let $H^s(M,\Sigma M)$ be the Sobolev space of fractional order $s$, its dual space is denoted by  $H^{-s}(M,$
$\Sigma M)$.
We have a  linear operator $|D|^s : H^s(M,\Sigma M)\subset L^2(M,\Sigma M) \to L^2(M,\Sigma M)$
 defined by
\begin{equation} \label{eq:2.1}
|D|^su =\sum\limits_{k=1}^{\infty}a_k|\lambda_k|^s\psi_k,
\end{equation}
where $u=\sum_{k=1}^{\infty}a_k\psi_k\in H^s(M,\Sigma M)$.
Since $0\notin {\rm spec}(D)$ the inverse $|D|^{-s}\in\mathscr{L}(L^2(M,\Sigma M))$
is  compact and self-adjoint. $|D|^s$  can be used to define a new inner product on $H^s(M,\Sigma M)$,
\begin{equation} \label{eq:2.2}
(u,v)_{s,2}:=(|D|^su,|D|^sv)_2.
\end{equation}
The induced norm $\|\cdot\|_{s,2}$
$=\sqrt{(\cdot,\cdot)_{s,2}}$ is equivalent to the usual one  on $H^s(M,\Sigma M)$ (cf.~\cite{Ada,Amm}).
For $r\in\mathbb{R}$ consider the Hilbert space
$$
\bar{\omega}^{2r}=\left\{{\bf a}=(a_1,a_2,\cdots)\;\Bigm|\;\sum^\infty_{k=1}a_k^2\lambda_k^{2r}<\infty\right\}
$$
with inner product
$$
\langle\!\langle {\bf a}, {\bf b}\rangle\!\rangle_{2r}=\sum^\infty_{k=1}\lambda_k^{2r}a_kb_k.
$$
Then $H^s(M,\Sigma M)$ can be identified with the Hilbert space $\bar{\omega}^{2s}$.
Hence
$$
H^{-s}(M,\Sigma M)=(H^{s}(M,\Sigma M))'
$$
can be identified with $\bar{\omega}^{-2s}$,
where the pairing between $\bar{\omega}^{-2s}$ and $\bar{\omega}^{2s}$ is given by
$$
\langle {\bf a}, {\bf b}\rangle=\sum^\infty_{k=1}a_kb_k.
$$
It follows that $|D|^{-2s}$ gives a Hilbert space isomorphism from
$H^{-s}(M,\Sigma M)$ to $H^{s}(M,\Sigma M)$ with respect to the equivalent new inner products
as in (\ref{eq:2.2}). Moreover we have a continuous inclusion $L^2(M,\Sigma M)\hookrightarrow H^{-s}(M,\Sigma M)$
and
\begin{equation} \label{eq:2.2.1}
(|D|^{-2s}u, v)_{s,2}:=(u,v)_2\quad\forall u,v\in L^2(M,\Sigma M).
\end{equation}

Consider the Hilbert space
\begin{equation} \label{eq:2.3}
E_s:=H^s(M,\Sigma M)\times H^{1-s}(M,\Sigma M)
\end{equation}
with norm $\|z\|:=(\|u\|_s^2+\|v\|_{1-s}^2)^{\frac{1}{2}}$ for $z=(u,v)\in E_s$. By the Sobolev embedding theorem, we have the compact embedding $E_s\hookrightarrow L^{p+1}(M,\Sigma M)\times L^{q+1}(M,\Sigma M)$.
 Let $E_s^*=H^{-s}(M,\Sigma M)\times H^{-(1-s)}(M,\Sigma M)$, which is the dual space of $E_s$.
Then
$$
\mathcal{D}_s:=\begin{pmatrix}
 |D|^{-2s} & 0 \\ 0 & |D|^{-2(1-s)}
\end{pmatrix}:E_s^*\to E_s
$$
is a Hilbert space isomorphism by the arguments above (\ref{eq:2.2.1}) and
\begin{equation}\label{eq:2.4}
(\mathcal{D}_sz_1,z_2)_{E_s}=(z_1,z_2)_{L^2}
\end{equation}
for any $z_1$, $z_2\in L^2(M,\Sigma M)\times L^2(M,\Sigma M)$.

Since $M$ is compact, by the assumption ({\bf H1}) we have constants $C_1, C_2>0$
such that
\begin{equation} \label{eq:2.4.1}
|H(x,u,v)|\geq C_1(|u|^2+|v|^2)-C_2\quad\forall (x,u,v),
\end{equation}
and by the assumption ({\bf H2}) we can use Young's inequality to derive
\begin{equation} \label{eq:2.4.2}
|H(x,u,v)|\leq C(1+|u|^{p+1}+|v|^{q+1})\quad\forall (x,u,v)
\end{equation}
for some constant $C>0$. (Later on, we also use  $C$ to denote various positive
constants independent of $u$ and $v$ without special statements).
  (\ref{eq:2.4.1}) and (\ref{eq:2.4.2}) show  that the nonlinearity $H$ is asymptotically quadric or superquadric.

From now on  we also assume  that
\begin{equation}\label{eq:2.4.3}
H\in C^2(\Sigma M\oplus\Sigma M).
\end{equation}

\begin{proposition}\label{prop:2.1}
Assume that $H\in C^1(\Sigma M\oplus\Sigma M)$
satisfies $({\bf H}1)-({\bf H}2)$ and $({\bf H}4)$. Then the functional  $\mathcal{H}:E_s\to \mathbb{R}$
 defined by
 \begin{equation} \label{eq:2.4.4}
\mathcal{H}(x,u,v)=\int_MH(x,u(x),v(x))dx,
\end{equation}
 is of class $C^1$,  its derivation at $(u,v)\in E_s$ is given by
\begin{equation}\label{eq:2.4.5}
\mathcal{H}^\prime(u,v)(\xi,\zeta)=\int_M\big(\langle H_u(x,u,v),\xi\rangle +\langle H_v(x,u,v),\zeta\rangle\big) dx\quad\forall (\xi,\zeta)\in E_s,
\end{equation}
and $\mathcal{H}^\prime:E_s\to E_s^\ast\equiv E_s$ is a compact map.
Furthermore, if this $H$ also belongs to  $C^2(\Sigma M\oplus\Sigma M)$
and satisfies $({\bf H}3)$, then
$\mathcal{H}$ is of class $C^2$.
\end{proposition}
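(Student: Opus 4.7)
The plan is to treat $\mathcal{H}$ as a composition of the Nemytskii operator associated with $H$ and the Sobolev embeddings $E_s\hookrightarrow L^{p+1}(M,\Sigma M)\times L^{q+1}(M,\Sigma M)$, which are continuous and in fact compact by the choice of $s$ in (\ref{eq:1.4.1}). Well-definedness of $\mathcal{H}$ on $E_s$ follows immediately from (\ref{eq:2.4.2}) and these embeddings. For G\^ateaux differentiability I would fix $(u,v),(\xi,\zeta)\in E_s$ and, for small $|t|$, apply the pointwise mean value theorem to write
\begin{equation*}
\frac{H(x,u+t\xi,v+t\zeta)-H(x,u,v)}{t}=\langle H_u(x,u+\theta t\xi,v+\theta t\zeta),\xi\rangle+\langle H_v(x,u+\theta t\xi,v+\theta t\zeta),\zeta\rangle
\end{equation*}
for some $\theta=\theta(x,t)\in(0,1)$, then use ({\bf H}2) plus H\"older to produce an $L^1$-dominating function and pass to the limit via dominated convergence, obtaining formula (\ref{eq:2.4.5}).

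The key observation, which makes the pairing in (\ref{eq:2.4.5}) bounded in $(\xi,\zeta)\in E_s$, is the exact arithmetic of the exponents in ({\bf H}2): raising the bound on $|H_u|$ to the power $(p+1)/p$ gives $|H_u|^{(p+1)/p}\le C\bigl(1+|u|^{p+1}+|v|^{q+1}\bigr)$, since $\tfrac{p(q+1)}{p+1}\cdot\tfrac{p+1}{p}=q+1$, and analogously $|H_v|^{(q+1)/q}\le C\bigl(1+|u|^{p+1}+|v|^{q+1}\bigr)$. Hence the Nemytskii map $(u,v)\mapsto (H_u(\cdot,u,v),H_v(\cdot,u,v))$ is continuous as a map $L^{p+1}\times L^{q+1}\to L^{(p+1)/p}\times L^{(q+1)/q}$, and composed with the dual embeddings $L^{(p+1)/p}\hookrightarrow H^{-s}$ and $L^{(q+1)/q}\hookrightarrow H^{-(1-s)}$ (dual to the Sobolev embeddings above, which are in the valid range by (\ref{eq:1.4.1})), it defines a continuous map $E_s\to E_s^\ast$. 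Continuity of $\mathcal{H}'$ then upgrades G\^ateaux- to Fr\'echet-differentiability, so $\mathcal{H}\in C^1$.

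Compactness of $\mathcal{H}':E_s\to E_s^\ast$ follows from the same diagram: if $z_n\rightharpoonup z$ in $E_s$, then $z_n\to z$ strongly in $L^{p+1}\times L^{q+1}$ by compactness of the Sobolev embedding, and by the Nemytskii continuity just established $\mathcal{H}'(z_n)\to \mathcal{H}'(z)$ in $E_s^\ast$. Hypothesis ({\bf H}4) plays the role of certifying that the image of $\mathcal{T}$ lies in the dual Sobolev scale uniformly on sublevel sets, which is consistent with the above and will be used later in Section~\ref{sec:3}.

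For the second statement, under ({\bf H}3) and $H\in C^2$ I would repeat the above scheme at the level of second derivatives. Near the origin $\{|z|\le\delta\}$ the continuous functions $H_{uu},H_{uv},H_{vu},H_{vv}$ are bounded by compactness of $M$, while for $|z|>\delta$ the growth bounds (\ref{H3:1})--(\ref{H3:3}) apply. The candidate bilinear form
\begin{equation*}
\mathcal{H}''(u,v)\bigl[(\xi_1,\zeta_1),(\xi_2,\zeta_2)\bigr]=\int_M\bigl(\langle H_{uu}\xi_1,\xi_2\rangle+\langle H_{uv}\zeta_1,\xi_2\rangle+\langle H_{vu}\xi_1,\zeta_2\rangle+\langle H_{vv}\zeta_1,\zeta_2\rangle\bigr)\,dx
\end{equation*}
is shown to be a continuous bilinear form on $E_s$ via H\"older with exponents $\tfrac{p+1}{p-1},\tfrac{p+1}{2},\tfrac{p+1}{2}$ for the $H_{uu}$ term (and analogously $q$ for $H_{vv}$), all of which are admissible since $u\in L^{p+1}$ and $v\in L^{q+1}$; the mixed terms $H_{uv},H_{vu}$ are handled by their uniform boundedness. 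Continuity of $\mathcal{H}''$ into the space of continuous bilinear forms follows once more from Nemytskii continuity in the appropriate $L^r$-spaces, whence $\mathcal{H}\in C^2$.

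The main obstacle is the careful bookkeeping of the mixed exponent $\tfrac{p(q+1)}{p+1}$ in ({\bf H}2): verifying that the resulting H\"older pairings close exactly against the Sobolev embeddings permitted by (\ref{eq:1.4.1}) is what makes the whole scheme work, and everything else is standard Nemytskii calculus.
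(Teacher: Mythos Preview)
Your approach is essentially the same as the paper's: G\^ateaux differentiability via the mean value theorem and dominated convergence, continuity of $\mathcal{H}'$ via Nemytskii operator continuity between Lebesgue spaces composed with the Sobolev embeddings, compactness via Rellich, and the $C^2$ part via the same scheme applied to second derivatives under (\textbf{H3}). The paper works with the critical Sobolev exponents $\hat r_1=\tfrac{2n}{n-2s}$, $\hat r_2=\tfrac{2n}{n-2(1-s)}$ and the corresponding H\"older partners $s_1=\tfrac{n}{2s}$, $s_2=\tfrac{n}{2(1-s)}$, whereas you use the subcritical $p+1$, $q+1$ directly; both choices close under (\ref{eq:1.4.1}).

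One small slip: the H\"older triple you list for the $H_{uu}$ term, namely $\tfrac{p+1}{p-1},\tfrac{p+1}{2},\tfrac{p+1}{2}$, does not sum reciprocally to $1$. The correct three-way split is $\tfrac{p+1}{p-1},\,p+1,\,p+1$, so that $\tfrac{p-1}{p+1}+\tfrac{1}{p+1}+\tfrac{1}{p+1}=1$; equivalently, first pair $|H_{uu}|$ against $|\xi_1\xi_2|$ with conjugate exponents $\tfrac{p+1}{p-1}$ and $\tfrac{p+1}{2}$, then split the product. This is purely notational and does not affect the argument.
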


 \begin{remark}\label{rmk:2.2}
{\rm  If the real numbers $p,q$ satisfy
$$
1<p,q<\min\bigg\{\frac{n+2s}{n-2s},
\frac{n+2(1-s)}{n-2(1-s)}\bigg\}
$$
for some $s\in(0,1)$, which implies (\ref{eq:1.4}), the above space $E_s$ can be replaced by $E_{\frac{1}{2}}$.
In particular, for $n=\dim M=2$ and $2<p,q<3$, we can prove that the
functional  $\mathcal{H}:E_{\frac{1}{2}}\to \mathbb{R}$ is of class $C^3$
provided that  $H\in C^3(\Sigma M\oplus\Sigma M)$ satisfies  $({\bf H}1)-({\bf H}4)$,
and that suitable growth conditions on $H_{uuu}$, $H_{vvv}$, $H_{uuv}$ and $H_{uvv}$ are applied.
Of course, for $n=\dim M=1$ it can also be proved that
the functional  $\mathcal{H}$ is of class $C^\infty$ on $E_{\frac{1}{2}}$ if
$H\in C^\infty(\Sigma M\oplus\Sigma M)$ satisfies suitable conditions.
}
\end{remark}
For the sake of completeness we shall give the proof of Proposition~\ref{prop:2.1} in Appendix A.

It follows that the Rabinowitz-Floer functional $\mathcal{A}_H$ in (\ref{eq:1.11})
 is  of class $C^2$ on Hilbert space
 $\mathcal{E}:=E_s\times \mathbb{R}$
 with inner product
\begin{equation}\label{eq:2.5}
\big((\xi_1,\mu_1),(\xi_2,\mu_2)\big)_{\mathcal{E}}=(\xi_1,\xi_2)_{E_s}+\mu_1\cdot\mu_2
\end{equation}
for $(\xi_i,\mu_i)\in \mathcal{E}$, $i=1, 2$. Moreover,
   $(u,v,\lambda)\in \mathcal{E}$ is a critical point of $\mathcal{A}_H$ if and only if
\begin{eqnarray} \label{eq:2.6}
\left\{ \begin{array}{l}
Du=\lambda H_v(x,u,v)\hspace{8mm} {\rm on}
\hspace{2mm}M,\\
Dv=\lambda H_u(x,u,v)\hspace{8mm} {\rm on}
\hspace{2mm}M,\\
\textstyle\int_M H(x,u,v)dx=1\hspace{6mm} {\rm on}\hspace{2mm}M.
\end{array} \right.
\end{eqnarray}
Since $\langle Du,v\rangle=\langle u,Dv\rangle$ and $\int_M\langle Du,v\rangle dx=(Du,v)_2$,
 the functional $\mathcal{A}_H$ can be written as
\begin{equation} \label{eq:2.7}
\mathcal{A}_H(z,\lambda)=\frac{1}{2}\int_M\langle Lz(x),z(x)\rangle dx-\lambda\int_M (H(x,z(x))-1)dx,
\end{equation}
where
\begin{equation}\notag
L=\begin{pmatrix}
 0 & D \\ D & 0
\end{pmatrix}.
\end{equation}
Note that $\int_M\langle Lz(x),z(x)\rangle dx=(Lz,z)_2=(\mathcal{D}_sLz,z)_{E_s}$ by (\ref{eq:2.4}).
We deduce that the gradient of $\mathcal{A}_H$ with respect to the metric (\ref{eq:2.5}) is given by
\begin{equation}\label{eq:2.8}
\nabla \mathcal{A}_H(z,\lambda)=\begin{pmatrix}
\mathcal{D}_s\{Lz-\lambda H_z(x,z)\} \\
 -\int_M\big(H(x,z)-1\big)dx
\end{pmatrix},
\end{equation}
where $H_z(x,z)=(H_u(x,u,v),H_v(x,u,v))^T$.  Proposition~\ref{prop:2.1}
implies that $\nabla\mathcal{A}_H$ is of class $C^1$ on $E_s$. Hence  the following system of PDE's
\begin{eqnarray} \label{eq:2.9}
\left\{ \begin{array}{l}
\frac{\partial z}{\partial t}=-\mathcal{D}_s\{L z-\lambda H_z(x,z)\},\\[4pt]
\frac{\partial \lambda}{\partial t}=\textstyle\int_M (H(x,z)-1)dx.
\end{array} \right.
\end{eqnarray}
has a local flow on $\mathcal{E}$. But the initial value problem for the $L^2$ - gradient flow is ill-posed since the spectrum of $D$ is unbounded from below. We work on $E_s$ which makes the absence of the symmetry of $u$ and $v$ by imposing more regularity of $u$ than of $v$ if $p$ is large and $q$ is small, and vice versa.

\noindent{\bf The perturbed flows.} To obtain transversality, We shall follow the idea of Angenent and Vorst \cite{AnV} to perturb the metric on $\mathcal{E}=E_s\oplus\mathbb{R}$ and thus make all connecting orbits between critical points to be transverse.

Let $C^2=C^2(M,\Sigma M\oplus\Sigma M)$, which is a separable Banach space; see \cite{KrP}.
By the definition (cf. \cite{HiP}), a nuclear  operator $T$ from
$\mathcal{E}$ to $C^2\oplus\mathbb{R}$ is a bounded linear operator which
can be written as an absolutely convergent sum
$\sum_{k=1}^\infty y_k\otimes x_k^*$, where $x_k^*\in \mathcal{E}^*$ and $y_k\in C^2\oplus\mathbb{R}$.
The norm of $T$ is defined by
\begin{equation*}
\|T\|_{\mathcal{NS}}={\rm inf}\sum\limits_{k=1}^\infty\|x^*_k\|_{\mathcal{E}^*}\|y_k\|
_{C^2\oplus\mathbb{R}}.
\end{equation*}
Consider the space
\begin{eqnarray}\notag
\mathcal{NS}(\mathcal{E},C^2\oplus\mathbb{R}):=\left\{K\in \mathscr{L}(\mathcal{E},C^2\oplus\mathbb{R})\bigg|\begin{array}{l}
K\hbox{ is nuclear and symmetric with}\\
\hbox{respect to the inner product of }\mathcal{E}
\end{array}\right\}.
\end{eqnarray}
It is a separable Banach space with respect to the above norm
$\|\cdot\|_{\mathcal{NS}}$, and contains the space of finite rank operator from
$\mathcal{E}$ to $C^2\oplus\mathbb{R}$ as a dense subspace.

 Let $K:\mathcal{E}\to\mathcal{NS}(\mathcal{E},C^2\oplus\mathbb{R})$ be a smooth map of form
\begin{equation}\label{met:1}
K(w)=e^{-\|w\|_{\mathcal{E}}^2}\tilde{K}(w),
\end{equation}
where $\tilde{K}\in C^\infty(\mathcal{E}, \mathcal{NS}(\mathcal{E},C^2\oplus\mathbb{R}))$  satisfies the Gevrey type estimates
\begin{equation}\label{met:2}
\sup_{n\ge 0}\frac{\sup\limits_{w\in \mathcal{E}}\|\tilde{K}^{(n)}(w)\|_{\mathscr{L}_n(\mathcal{E},\mathcal{NS}
(\mathcal{E},C^2\oplus\mathbb{R}))}}{(n!)^2}<\infty.
\end{equation}
Denote by $\mathbf{K}_0$ the set of such maps $K$ such that
\begin{equation}\label{met:3}
\sup_{w\in \mathcal{E}}\|K(w)\|_{\mathscr{L}(\mathcal{E}, \mathcal{E})}<\frac{1}{2}.
\end{equation}
(Note: for any $w\in \mathcal{E}$, $K(\omega)\in \mathcal{NS}(\mathcal{E},C^2\oplus\mathbb{R})
\subset\mathscr{L}(\mathcal{E},C^2\oplus\mathbb{R})$ and $C^2\oplus\mathbb{R}\hookrightarrow\mathcal{E}$
is continuous, so $K(\omega)\in\mathscr{L}(\mathcal{E}, \mathcal{E})$.)
The norm of $K$ is defined to be the left side of inequality in  (\ref{met:2}).
 Then $\mathbf{K}_0$ is a Banach space with respect to this norm. Note that the space  $\mathbf{K}_0$ contains maps of the form
\begin{equation}\label{met:4}
\rho(\|w-w_0\|)k_0,
\end{equation}
where $k_0\in \mathcal{NS}(\mathcal{E},C^2\oplus\mathbb{R})$ is a constant, and $\rho(t)=e^{-1/(1-t^2)}$ for $t<1$, and $\rho(t)=0$ for $t\geq 1$. We define  a  closed linear subspace of $\mathbf{K}_0$,
 \begin{equation}\label{met:4.1}
\mathbf{K}={\rm span}(\{\hbox{ all maps of form (\ref{met:4})}\}).
\end{equation}
Each $K\in \mathbf{K}$ can yield a perturbed Riemannian metric $g^K$ on $\mathcal{E}$ defined by
\begin{equation}\label{met:5}
g_w^K\big(\xi_1,\xi_2\big)=
(\xi_1,(I+K(w))^{-1}\xi_2)_\mathcal{E},
\end{equation}
where $\xi_i\in T_w\mathcal{E}=\mathcal{E}, i=1, 2$.
Then the gradient of $\mathcal{A}_H$ with respect to $g^K$ is given by
\begin{equation}\label{eq:2.10}
\nabla^K \mathcal{A}_H(w)=(I+K(w))\nabla \mathcal{A}_H(w),
\end{equation}
and the modified gradient flow becomes
\begin{equation} \label{eq:2.11}
\frac{dw(t)}{dt}+\nabla^K \mathcal{A}_H(w(t))=0.
\end{equation}
Denote by ${\rm Pr_1}$ the projection from $\mathcal{E}$ to $E_s$. From (\ref{met:1}) we get
\begin{equation} \label{eq:2.12}
\|{\rm Pr_1}(K(w)\nabla\mathcal{A}_H(w))\|_{C^2}\leq C.
\end{equation}


\begin{proposition}\label{prop:2.2}
For any $x\in\mathcal{E}\setminus\{0\}$ and $y\in C^2\oplus\mathbb{R}$, there exists a $K\in\mathcal{NS}(\mathcal{E},C^2\oplus\mathbb{R})$ satisfying $K(x)=y$.
\end{proposition}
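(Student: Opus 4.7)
My plan is to construct $K$ explicitly as a rank-at-most-two operator built from two vectors in $C^2\oplus\mathbb{R}$, namely $y$ itself and a carefully chosen auxiliary vector $a$ that pairs nicely with $x$.

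First I would exploit the density of $C^2\oplus\mathbb{R}$ in $\mathcal{E}$ (which holds because $C^2(M,\Sigma M)$ is dense in each of the fractional Sobolev spaces $H^s(M,\Sigma M)$ and $H^{1-s}(M,\Sigma M)$, and then attach an $\mathbb{R}$-factor) together with the continuous inclusion $C^2\oplus\mathbb{R}\hookrightarrow\mathcal{E}$. Since $x\neq 0$, the linear functional $(x,\cdot)_{\mathcal{E}}$ is not identically zero on the dense subspace $C^2\oplus\mathbb{R}$, so I can select $a\in C^2\oplus\mathbb{R}$ with $(x,a)_{\mathcal{E}}=1$ (pick any element close enough to $x/\|x\|_{\mathcal{E}}^2$ and then rescale).

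Next I would define
\begin{equation*}
K(\xi):=(\xi,a)_{\mathcal{E}}\,y+(\xi,y)_{\mathcal{E}}\,a-(x,y)_{\mathcal{E}}(\xi,a)_{\mathcal{E}}\,a,\qquad \xi\in\mathcal{E}.
\end{equation*}
This is manifestly a finite rank map from $\mathcal{E}$ into $\mathrm{span}(a,y)\subset C^2\oplus\mathbb{R}$, so the range sits in $C^2\oplus\mathbb{R}$ and $K$ is nuclear with nuclear norm bounded by a constant times $\|a\|_{C^2\oplus\mathbb{R}}(\|y\|_{C^2\oplus\mathbb{R}}+|(x,y)_{\mathcal{E}}|\|a\|_{C^2\oplus\mathbb{R}})+\|y\|_{C^2\oplus\mathbb{R}}\|a\|_{C^2\oplus\mathbb{R}}$ (since each rank-one piece $b\otimes c^*$ with $b\in C^2\oplus\mathbb{R}$ and $c^*\in\mathcal{E}^*$ has nuclear norm at most $\|b\|_{C^2\oplus\mathbb{R}}\|c^*\|_{\mathcal{E}^*}$).

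Then I would verify the two desired properties by direct computation. For symmetry with respect to $(\cdot,\cdot)_{\mathcal{E}}$, both $(K\xi,\eta)_{\mathcal{E}}$ and $(\xi,K\eta)_{\mathcal{E}}$ expand into the same symmetric combination
\begin{equation*}
(\xi,a)_{\mathcal{E}}(\eta,y)_{\mathcal{E}}+(\xi,y)_{\mathcal{E}}(\eta,a)_{\mathcal{E}}-(x,y)_{\mathcal{E}}(\xi,a)_{\mathcal{E}}(\eta,a)_{\mathcal{E}},
\end{equation*}
which uses only the reality of the inner product on $\mathcal{E}$. For the interpolation condition, plugging $\xi=x$ and using $(x,a)_{\mathcal{E}}=1$ gives
\begin{equation*}
K(x)=y+(x,y)_{\mathcal{E}}\,a-(x,y)_{\mathcal{E}}\,a=y,
\end{equation*}
as required. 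The only step with genuine content is the density argument producing $a\in C^2\oplus\mathbb{R}$ with $(x,a)_{\mathcal{E}}=1$; everything else is formal algebra once the right ansatz is in place.
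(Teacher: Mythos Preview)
Your proof is correct and follows essentially the same approach as the paper's: use density of $C^2\oplus\mathbb{R}$ in $\mathcal{E}$ to pick an auxiliary element pairing nontrivially with $x$, then build a symmetric finite-rank (hence nuclear) operator from $y$ and that auxiliary element. The only cosmetic difference is that the paper splits into two cases---when $(x,y)_{\mathcal{E}}\neq 0$ it takes the rank-one operator $K(w)=\frac{(w,y)_{\mathcal{E}}}{(x,y)_{\mathcal{E}}}y$, and when $(x,y)_{\mathcal{E}}=0$ it uses the rank-two operator $K(w)=\frac{(w,\xi)_{\mathcal{E}}}{(x,\xi)_{\mathcal{E}}}y+\frac{(w,y)_{\mathcal{E}}}{(x,\xi)_{\mathcal{E}}}\xi$---whereas your single formula with the correction term $-(x,y)_{\mathcal{E}}(\xi,a)_{\mathcal{E}}a$ handles both cases at once.
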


\begin{proof}
As noted above the space of finite rank operators from $\mathcal{E}$ to $C^2\oplus\mathbb{R}$ is dense in the space of nuclear operators. If $(x,y)_\mathcal{E}=0$, by choosing $\xi\in C^2\oplus\mathbb{R}$ with $(x,\xi)_\mathcal{E}\neq0$ we define
\begin{equation} \label{eq:2.13}
K(w)=\frac{(w,\xi)_\mathcal{E}}{(x,\xi)_\mathcal{E}}y+\frac{(w,y)_\mathcal{E}}
{(x,\xi)_\mathcal{E}}\xi,\quad\forall w\in\mathcal{E}.
\end{equation}
If $(x,y)_\mathcal{E}\neq0$, we put
\begin{equation} \label{eq:2.14}
K(w)=\frac{(w,y)_\mathcal{E}}
{(x,y)_\mathcal{E}}y,\quad\forall w\in\mathcal{E}.
\end{equation}
In both cases, $K$ is a finite rank operator which is symmetric with respect to
 the inner product in (\ref{eq:2.5}).
\end{proof}

\section{$(PS)_c$ condition and boundedness of the perturbed flows}\label{sec:3}
\setcounter{equation}{0}

In this section we always assume that $H\in C^2(\Sigma M\oplus\Sigma M)$
satisfies $({\bf H}1)-({\bf H}4)$ without special statements.

\subsection{$(PS)_c$ condition}\label{sec:3.1}


\begin{proposition}\label{prop:3.1}
 Suppose that $H\in C^1(\Sigma M\oplus\Sigma M)$
satisfies $({\bf H}1)-({\bf H}2)$ and $({\bf H}4)$.
 Then the functional $\mathcal{A}_H$ satisfies the $(PS)_c$ condition;
that is, suppose that a sequence $\{(z_k,\lambda_k)\}^\infty_{k=1}\subset\mathcal{E}$
satisfies   $\mathcal{A}_H(z_k,\lambda_k)\to c\in\mathbb{R}$ and
$$
 \|\nabla\mathcal{A}_H(z_k,\lambda_k)\|_{\mathcal{E}}=\|d\mathcal{A}_H(z_k,\lambda_k)\|_{\mathcal{E}^*}\to0 \;\hbox{as $k\to \infty$},
 $$
 then it has a convergent subsequence.
\end{proposition}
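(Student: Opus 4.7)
I would bound $\{|\lambda_k|\}$ and $\{\|z_k\|_{E_s}\}$ simultaneously by playing off two interlocking estimates, then extract a strongly convergent subsequence via the compactness of $H_z$ from Proposition~\ref{prop:2.1}. Write $z_k=(u_k,v_k)$ and $\epsilon_k:=\|d\mathcal{A}_H(z_k,\lambda_k)\|_{\mathcal{E}^*}=\|\nabla\mathcal{A}_H(z_k,\lambda_k)\|_{\mathcal{E}}\to 0$.

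\medskip
\noindent\emph{Step 1 (gradient-side estimate).} The $\lambda$-component of $\nabla\mathcal{A}_H$ in (\ref{eq:2.8}) tending to zero gives $\int_M H(x,z_k)dx\to 1$, so eventually $z_k\in\Sigma_2(H)$. Hypothesis (\textbf{H4}) then furnishes a uniform bound $\|\mathcal{T}(z_k)\|_{L^{\frac{2n}{n+2s}}\times L^{\frac{2n}{n+2(1-s)}}}\leq M_0$, and dualizing the Sobolev embeddings $H^s\hookrightarrow L^{\frac{2n}{n-2s}}$, $H^{1-s}\hookrightarrow L^{\frac{2n}{n-2(1-s)}}$ yields $\|H_z(x,z_k)\|_{E_s^*}\leq M_0'$. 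Since $\mathcal{D}_s:E_s^*\to E_s$ is an isomorphism, the first block of the gradient equation becomes
$$Lz_k-\lambda_k H_z(x,z_k)=\eta_k\quad\text{in }E_s^*,\qquad\|\eta_k\|_{E_s^*}\to 0;$$
because $0\notin\mathrm{Spec}(D)$, $L:E_s\to E_s^*$ is itself an isomorphism, so
$$\|z_k\|_{E_s}\leq C_1\bigl(\|\eta_k\|_{E_s^*}+|\lambda_k|M_0'\bigr)\leq C_1 M_0'\,|\lambda_k|+o(1).\qquad(\star)$$

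\medskip
\noindent\emph{Step 2 (energy--derivative identity and (\textbf{H1})).} Testing $d\mathcal{A}_H(z_k,\lambda_k)$ against $(u_k,v_k,0)\in\mathcal{E}$, and using self-adjointness of $D$ together with $\int_M dx=1$, one obtains
$$2\mathcal{A}_H(z_k,\lambda_k)-d\mathcal{A}_H(z_k,\lambda_k)(u_k,v_k,0)=\lambda_k\!\int_M\!\bigl[\langle H_u,u_k\rangle+\langle H_v,v_k\rangle-2H+2\bigr]dx.$$
By (\textbf{H1}) the integrand is pointwise $\geq 2-c_0>0$; combined with $|d\mathcal{A}_H(z_k,\lambda_k)(u_k,v_k,0)|\leq\epsilon_k\|z_k\|_{E_s}$ this produces
$$|\lambda_k|(2-c_0)\leq 2|c|+o(1)+\epsilon_k\|z_k\|_{E_s}.\qquad(\star\star)$$
Inserting $(\star)$ into $(\star\star)$ yields $|\lambda_k|(2-c_0-\epsilon_k C_1 M_0')\leq 2|c|+o(1)$, and since $\epsilon_k\to 0$ the coefficient is eventually positive, so $|\lambda_k|$ is bounded; reinserting into $(\star)$ bounds $\|z_k\|_{E_s}$.

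\medskip
\noindent\emph{Step 3 (compactness).} Pass to a subsequence with $\lambda_k\to\lambda$ and $z_k\rightharpoonup z$ weakly in $E_s$. Proposition~\ref{prop:2.1} asserts that $z\mapsto H_z(x,z)$ is compact from $E_s$ into $E_s^*$, so $\lambda_k H_z(x,z_k)+\eta_k\to\lambda H_z(x,z)$ strongly in $E_s^*$; applying the continuous $L^{-1}:E_s^*\to E_s$ to the Step~1 identity gives $z_k\to z$ strongly in $E_s$, finishing the proof.

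\medskip
\noindent\emph{Main obstacle.} The delicate point is the coupled bounding in Step~2: neither $(\star)$ nor $(\star\star)$ alone controls the pair $(|\lambda_k|,\|z_k\|_{E_s})$, and the strict inequality $c_0<2$ in (\textbf{H1}) is precisely what keeps the coefficient $2-c_0-\epsilon_k C_1 M_0'$ positive for large $k$; this is the technical reason, alluded to in the note after (\textbf{H4}), that $c_0<2$ is imposed to simplify the present proposition.
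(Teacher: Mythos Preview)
Your proof is correct and follows essentially the same approach as the paper: both arguments use (\textbf{H4}) together with $\int_M H(x,z_k)dx\to 1$ to bound $H_z(x,z_k)$ uniformly in $E_s^*$, then couple the estimate $\|z_k\|_{E_s}\lesssim |\lambda_k|+o(1)$ with the (\textbf{H1})-based inequality $|\lambda_k|(2-c_0)\lesssim 1+\epsilon_k\|z_k\|_{E_s}$ to bound both quantities, and conclude via the compactness of $\mathcal{H}'$ from Proposition~\ref{prop:2.1}. The only cosmetic difference is that the paper tests $d\mathcal{A}_H$ against the full vector $(z_k,\lambda_k)$ rather than $(z_k,0)$, which introduces an extra harmless $\varepsilon_k\lambda_k$ term; your choice is marginally cleaner.
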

\proof Since
  $\|\nabla\mathcal{A}_H(z_k,\lambda_k)\|^2_{\mathcal{E}}=
 \|\mathcal{D}_s\{L z_k-\lambda_k H_z(x,z_k)\}\|_{E_s}^2+ \left(\int_M(H(x,z_k)-1)dx\right)^{2}
 $  by (\ref{eq:2.8}),   we have
  \begin{eqnarray} \label{eq:3.1}
\left\{ \begin{array}{l}
 \epsilon_k:=\|\mathcal{D}_s\{L z_k-\lambda_k H_z(x,z_k)\}\|_{E_s}\to 0,\\
 \varepsilon_k:=\int_M(H(x,z_k)-1)dx\to 0
\end{array} \right.
\end{eqnarray}
as $k\to\infty$. Recalling $\int_Mdx=1$,
from the assumption $({\bf H}1)$ we derive
\begin{eqnarray} \label{eq:3.2}
\int_M\langle H_z(x,z_k),z_k\rangle dx\geq 2(1+\varepsilon_k)-c_0\ge 2-c_0>0\quad\forall k\in\mathbb{N}.
\end{eqnarray}
 Moreover,  the definition of $d\mathcal{A}_H$ implies
\begin{eqnarray} \label{eq:3.3}
&&\langle d\mathcal{A}(z_k,\lambda_k),(z_k,\lambda_k)\rangle\notag\\
&=&\big(\mathcal{D}_s\{L z_k-\lambda_k H_z(x,z_k)\},z_k\big)_{E_s}
-\lambda_k\int_M(H(x,z_k)-1)dx\notag\\
&=&(L z_k,z_k)_{L^2}-\lambda_k
(H_z(x,z_k),z_k)_{L^2}-\lambda_k\int_M (H(x,z_k)-1)dx\notag\\
&=&2\mathcal{A}_H(z_k,\lambda_k)-\lambda_k
(H_z(x,z_k),z_k)_{L^2}+\varepsilon_k\lambda_k\notag\\
&=&2c-\lambda_k
(H_z(x,z_k),z_k)_{L^2}+\varepsilon_k\lambda_k+ o(1).
\end{eqnarray}
Then it follows from (\ref{eq:3.2}) and (\ref{eq:3.3}) that for some constant $C>0$ and all $k\in\mathbb{N}$,
\begin{eqnarray} \label{eq:3.4}
|\lambda_k|&\leq&\frac{1}{2-c_0}|\lambda_k(H_z(x,z_k),z_k)_{L^2}|\notag\\
&\leq&\frac{1}{2-c_0}\{|\varepsilon_k\lambda_k|+2|\mathcal{A}_H
(z_k,\lambda_k)|+|d\mathcal{A}_H(z_k,\lambda_k),(z_k,\lambda_k)\rangle|\}
\notag\\
&\leq&\frac{1}{2-c_0}\{|\varepsilon_k\lambda_k|+2|\mathcal{A}_H
(z_k,\lambda_k)|+\|d\mathcal{A}_H(z_k,\lambda_k)\|_{\mathcal{E}^\ast}\cdot\|(z_k,\lambda_k)\|_{\mathcal{E}}\}
\notag\\
&\leq&C\{1+|\varepsilon_k||\lambda_k|+(\epsilon_k+\varepsilon_k)(\|z_k\|+|\lambda_k|)\}.
\end{eqnarray}

Next we estimate the $E_s$- norms of $z_k$. Since $\int_MH(x,z_k)dx=1+\varepsilon_k$ is bounded, by the Sobolev imbedding theorem and the assumption $({\bf H}4)$, we get
\begin{eqnarray} \label{eq:3.5}
\|\mathcal{D}_s H_z(x,z_k)\|_{E_s}&\leq&\||D|^{-2s}H_u(x,z_k)\|_{s,2}+\||D|^{-2(1-s)}
H_v(x,z_k)\|_{1-s,2}\notag\\
&=&\|H_u(x,z_k)\|_{-s,2}+\|H_v(x,z_k)\|_{-(1-s),2}\notag\\
&\leq&C\big(\|H_u(x,z_k)\|_{\frac{2n}{n+2s}}+\|H_v(x,z_k)\|
_{\frac{2n}{n+2(1-s)}}\big)\leq C,
\end{eqnarray}
where $C>0$ denotes different  constants.
Note that the composition operator $\mathcal{D}_s L:E_s\to E_s$ is an isometry.
Thus
\begin{eqnarray} \label{eq:3.7}
\|z_k\|^2_{E_s}&=&\|\mathcal{D}_s L z_k\|^2_{E_s}\notag\\
&=&\big|\big(\mathcal{D}_s L z_k,\mathcal{D}_s\{L z_k-\lambda_k H_z(x,z_k)\}\big)_{E_s}+\big(\mathcal{D}_s L z_k,
\lambda_k\mathcal{D}_s H_z(x,z_k)\big)_{E_s}\big|\notag\\
&\leq&\epsilon_k\|\mathcal{D}_s L z_k\|_{E_s}+|\lambda_k|
\|\mathcal{D}_s L z_k\|_{E_s}\|\mathcal{D}_s H_z(x,z_k)\|_{E_s}\notag\\
&\leq&\epsilon_k\|z_k\|_{E_s}+C|\lambda_k|
\|z_k\|_{E_s}.
\end{eqnarray}
Combining  (\ref{eq:3.4}) with (\ref{eq:3.7}), we deduce that
\begin{eqnarray} \label{eq:3.8}
|\lambda_k|+\|z_k\|_{E_s}\leq C\{1+|\varepsilon_k||\lambda_k|+\epsilon_k(\|z_k\|_{E_s}+|\lambda_k|)\},
\end{eqnarray}
which implies that both $z_k$ and $\lambda_k$ are bounded. Passing to a subsequence, we may assume that $z_k$ converges weakly in $E_s$ to $z=(u,v)$ and $\lambda_k$ converges to $\lambda\in\mathbb{R}$. Let $b_k$ be the first component of $\nabla \mathcal{A}_H(z_k,\lambda_k)$, i.e., $b_k=\mathcal{D}_sL z_k-\lambda_k\mathcal{D}_s H_z(x,z_k)$, which converges to zero. Since the operator $\mathcal{D}_s L:E_s\to E_s$ is an isometry,
and $\mathcal{H}^\prime$ is compact by Proposition~\ref{prop:2.1},
 we conclude that
\begin{eqnarray} \label{eq:3.9}
z_k=(\mathcal{D}_s L)^{-1}b_k-\lambda_k(\mathcal{D}_s L)^{-1}\mathcal{D}_s H_z(x,z_k)
\end{eqnarray}
converges in $E_s$ and so $\|z_k-z\|_{E_s}\to 0$. This shows that $\mathcal{A}_H$ satisfies the $(PS)_c$ condition.
\qed

Obverse that in the proof of boundedness of $\|(z_k,\lambda_k)\|_\mathcal{E}$ we only use the boundedness of $\mathcal{A}_H(z_k,\lambda_k)$ and the condition that $\|\nabla \mathcal{A}_H(z_k,\lambda_k)\|_\mathcal{E}$ is small enough. Consequently, we have

\begin{corollary}\label{coro:3.1}
Suppose $|\mathcal{A}_H(z,\lambda)|<R$ for some constant $R>0$. Then there exist $\epsilon>0$ and $C=C(R)$ such that each $(z,\lambda)\in \mathcal{E}$ with $\|\nabla \mathcal{A}_H(z,\lambda)\|_{\mathcal{E}}\leq\epsilon$ satisfies $\|(z,\lambda)\|_{\mathcal{E}}<C$.
\end{corollary}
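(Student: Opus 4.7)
The plan is to observe that Corollary~\ref{coro:3.1} is essentially the quantitative skeleton of the first half of the proof of Proposition~\ref{prop:3.1}: the chain of estimates leading to (\ref{eq:3.8}) never invoked sequential weak limits, only a bound on $\mathcal{A}_H$ together with smallness of $\nabla\mathcal{A}_H$. So I would extract that argument, with $(z_k,\lambda_k)$ replaced by a single $(z,\lambda)$ and ``$k\to\infty$'' replaced by ``$\epsilon$ chosen sufficiently small.''

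Concretely, for $(z,\lambda)\in\mathcal{E}$ with $|\mathcal{A}_H(z,\lambda)|<R$ and $\|\nabla\mathcal{A}_H(z,\lambda)\|_{\mathcal{E}}\leq\epsilon$, I set
\[
\tilde\epsilon:=\|\mathcal{D}_s\{Lz-\lambda H_z(x,z)\}\|_{E_s},\qquad \tilde\varepsilon:=\int_M\bigl(H(x,z)-1\bigr)\,dx,
\]
so that $\tilde\epsilon\leq\epsilon$ and $|\tilde\varepsilon|\leq\epsilon$ by (\ref{eq:2.8}). Requiring $\epsilon\leq 1$ yields $\int_M H(x,z)\,dx\leq 2$, hence $z\in\Sigma_2(H)$, and (\textbf{H4}) together with the Sobolev embedding argument of (\ref{eq:3.5}) supplies a universal bound $\|\mathcal{D}_s H_z(x,z)\|_{E_s}\leq C_0$. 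Assumption (\textbf{H1}) combined with $\int_M H\,dx=1+\tilde\varepsilon$ gives
\[
\int_M\langle H_z(x,z),z\rangle\,dx\geq 2-c_0-2\epsilon,
\]
which stays above $(2-c_0)/2$ after shrinking $\epsilon$.

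With these ingredients I would repeat the computation (\ref{eq:3.3})--(\ref{eq:3.4}) verbatim, replacing the asymptotic ``$2c+o(1)$'' by the pointwise bound $|2\mathcal{A}_H(z,\lambda)|<2R$; this yields
\[
|\lambda|\leq C(R,c_0)\bigl\{1+\epsilon(\|z\|_{E_s}+|\lambda|)\bigr\}.
\]
Separately, the isometry of $\mathcal{D}_s L$ on $E_s$ and the triangle inequality applied to $\mathcal{D}_s Lz=\mathcal{D}_s\{Lz-\lambda H_z\}+\lambda \mathcal{D}_s H_z$ give, in the spirit of (\ref{eq:3.7}),
\[
\|z\|_{E_s}=\|\mathcal{D}_s Lz\|_{E_s}\leq\tilde\epsilon+C_0|\lambda|\leq\epsilon+C_0|\lambda|.
\]
Adding the two inequalities produces the analogue of (\ref{eq:3.8}),
\[
\|z\|_{E_s}+|\lambda|\leq C+C\epsilon\bigl(\|z\|_{E_s}+|\lambda|\bigr),
\]
for some $C=C(R,c_0,C_0)$. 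A final choice $\epsilon\leq\min\{1,(2-c_0)/4,1/(2C)\}$ absorbs the right-hand term into the left, giving $\|(z,\lambda)\|_{\mathcal{E}}\leq 2C$, as desired.

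The conceptual difficulty is negligible; the only point worth care is that the constant $C_0$ coming from (\textbf{H4}) must be independent of the particular pair $(z,\lambda)$, which is precisely why (\textbf{H4}) was formulated in terms of boundedness of $\mathcal{T}$ on sublevel sets $\Sigma_a(H)$. Taking $a=2$ works uniformly once $\epsilon\leq 1$, so the threshold $\epsilon$ and the bound depend only on $R$ and the structural constants $c_0,C_0$ of $H$.
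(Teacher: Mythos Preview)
Your proposal is correct and follows precisely the paper's approach: the paper does not give a separate proof of Corollary~\ref{coro:3.1} but simply observes, in the sentence preceding it, that the boundedness argument in the proof of Proposition~\ref{prop:3.1} (the chain (\ref{eq:3.1})--(\ref{eq:3.8})) used only the boundedness of $\mathcal{A}_H(z_k,\lambda_k)$ and smallness of $\|\nabla\mathcal{A}_H(z_k,\lambda_k)\|_{\mathcal{E}}$. You have faithfully made that observation quantitative, and your care with the uniform constant from (\textbf{H4}) via $z\in\Sigma_2(H)$ is exactly the right bookkeeping.
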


\subsection{Boundedness in $\mathcal{E}$ for the autonomous flow}\label{sec:3.2}

Since $\mathcal{A}_H$ is of class $C^2$ under our assumptions, the local flow of $\nabla^K \mathcal{A}_H$
always exists.

\begin{proposition}\label{prop:3.2}
Assume that $\mathbb{R}\ni t\mapsto (z(t),\lambda(t))\in \mathcal{E}$ is a flow line of
$-\nabla^K \mathcal{A}_H$ between critical points and that
$\{\mathcal{A}_H(z(t),\lambda(t))\,|\,t\in \mathbb{R}\}\subset [a, b]$
for some two real numbers $a<b$.  Then there exists a constant $C_1=C_1(R,a,b)>0$ such that
$\|(z(t),\lambda(t))\|_{\mathcal{E}}\le C_1$ for all $t\in\mathbb{R}$.
\end{proposition}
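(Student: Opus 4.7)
The plan is to combine the dissipative nature of the gradient flow with the quantitative compactness provided by Corollary~\ref{coro:3.1}. Write $w(t)=(z(t),\lambda(t))$ and recall that $\nabla^K\mathcal{A}_H(w)=(I+K(w))\nabla\mathcal{A}_H(w)$. Since $K\in\mathbf{K}$ satisfies $\|K(w)\|_{\mathscr{L}(\mathcal{E},\mathcal{E})}<1/2$, the operator $I+K(w)$ is positive definite and $g^K_w(\xi,\xi)\ge\frac{2}{3}\|\xi\|_{\mathcal{E}}^2$; in particular one checks the two-sided comparison
\begin{equation*}
\tfrac{1}{2}\|\nabla\mathcal{A}_H(w)\|_{\mathcal{E}}^{2}\le\|\nabla^{K}\mathcal{A}_H(w)\|_{g^{K}}^{2}\le\tfrac{3}{2}\|\nabla\mathcal{A}_H(w)\|_{\mathcal{E}}^{2}.
\end{equation*}

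First I would exploit the fact that $\mathcal{A}_H$ is nonincreasing along the flow. Differentiating gives $\frac{d}{dt}\mathcal{A}_H(w(t))=-\|\nabla^{K}\mathcal{A}_H(w(t))\|_{g^{K}}^{2}$, and integrating over $\mathbb{R}$ between the two critical endpoints yields
\begin{equation*}
\int_{-\infty}^{\infty}\|\nabla\mathcal{A}_H(w(t))\|_{\mathcal{E}}^{2}\,dt\;\le\;2\bigl(\mathcal{A}_H(w(-\infty))-\mathcal{A}_H(w(+\infty))\bigr)\;\le\;2(b-a).
\end{equation*}

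Next I would fix $R:=\max\{|a|,|b|\}+1$ and let $\epsilon>0$, $C=C(R)$ be the constants provided by Corollary~\ref{coro:3.1}. Choose $T>0$ large enough that $2(b-a)/T<\epsilon^{2}/4$. For any $t\in\mathbb{R}$, by the mean value theorem applied to the integral above, there is some $t^{\ast}\in[t-T,t+T]$ with $\|\nabla\mathcal{A}_H(w(t^{\ast}))\|_{\mathcal{E}}^{2}\le 2(b-a)/(2T)<\epsilon^{2}/4$, so $\|\nabla\mathcal{A}_H(w(t^{\ast}))\|_{\mathcal{E}}\le\epsilon$. Since $\mathcal{A}_H(w(t^{\ast}))\in[a,b]\subset(-R,R)$, Corollary~\ref{coro:3.1} gives $\|w(t^{\ast})\|_{\mathcal{E}}<C$.

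Finally I would propagate this bound to the original time $t$ by controlling the length of the trajectory. Since $\dot w(s)=-\nabla^{K}\mathcal{A}_H(w(s))$ and $\|I+K(w)\|_{\mathscr{L}(\mathcal{E},\mathcal{E})}\le 3/2$, Cauchy--Schwarz gives
\begin{equation*}
\|w(t)-w(t^{\ast})\|_{\mathcal{E}}\;\le\;\int_{t^{\ast}}^{t}\|\nabla^{K}\mathcal{A}_H(w(s))\|_{\mathcal{E}}\,ds\;\le\;\tfrac{3}{2}\sqrt{|t-t^{\ast}|}\Bigl(\int_{-\infty}^{\infty}\|\nabla\mathcal{A}_H(w(s))\|_{\mathcal{E}}^{2}\,ds\Bigr)^{1/2}\le\tfrac{3}{2}\sqrt{2T\cdot 2(b-a)},
\end{equation*}
yielding the uniform bound $\|w(t)\|_{\mathcal{E}}\le C+3\sqrt{T(b-a)}=:C_{1}(R,a,b)$, independent of $t$. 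The main technical point, and the only place where the specific structure of $K$ enters, is the equivalence of $\|\cdot\|_{g^{K}}$ and $\|\cdot\|_{\mathcal{E}}$, which is guaranteed by the defining condition (\ref{met:3}); everything else is a soft flow-theoretic argument driven by the compactness statement of Corollary~\ref{coro:3.1}.
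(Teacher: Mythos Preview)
Your proof is correct and follows essentially the same strategy as the paper: bound the total energy $\int_{\mathbb{R}}\|\nabla\mathcal{A}_H(w)\|_{\mathcal{E}}^2\,dt$ by $b-a$ (up to a constant coming from the equivalence of $g^K$ and the $\mathcal{E}$-metric), use this to find a nearby time where the gradient is small, invoke Corollary~\ref{coro:3.1} there, and then propagate back via Cauchy--Schwarz. The only cosmetic difference is that the paper defines a forward hitting time $\tau(s)=\inf\{t\ge 0:\|\nabla\mathcal{A}_H(w(s+t))\|_{\mathcal{E}}<\epsilon\}$ and bounds $\tau(s)\le 2(b-a)/\epsilon^2$, whereas you fix a window $[t-T,t+T]$ in advance and use the mean value; both are equivalent ways of extracting a ``good'' nearby time from the energy bound.
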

\proof
Let $\epsilon$ be as in Corollary~\ref{coro:3.1} with $R=\max\{|a|,|b|\}$. For $s\in\mathbb{R}$,
if $\big\|\nabla \mathcal{A}_H\bigl(z(s),\lambda(s)\bigr)\big\|_{\mathcal{E}}<\epsilon$
we define $\tau(s)=0$; otherwise, since $\big\|\nabla \mathcal{A}_H\bigl(z(s+t),\lambda(s+t)\bigr)\big\|_{\mathcal{E}}\to 0$ as $t\to +\infty$ we have
$\big\{t\geq 0\;\big|\; \big\|\nabla \mathcal{A}_H\bigl(z(s+t),\lambda(s+t)\bigr)\big\|_{\mathcal{E}}<\epsilon\big\}
\ne\emptyset$ and hence
\begin{eqnarray}\label{eq:3.10a}
\tau(s):=\inf \left\{t\geq 0\;\Bigm|\; \big\|\nabla \mathcal{A}_H\bigl(z(s+t),\lambda(s+t)\bigr)\big\|_{\mathcal{E}}<\epsilon\right\}
\end{eqnarray}
is a nonnegative real number. Clearly, in the latter case it holds that
$$
\big\|\nabla \mathcal{A}_H\bigl(z(s+t),\lambda(s+t)\bigr)\big\|_{\mathcal{E}}\ge\epsilon\quad
\forall t\in [s, s+\tau(s)].
$$
Moreover, (\ref{met:3}) implies that
$((I+K)w,w)_{\mathcal{E}}\geq\frac{1}{2}\|w\|^2_{\mathcal{E}}$ for any $w\in\mathcal{E}$.
Then
\begin{eqnarray*}
\|\nabla^K \mathcal{A}_H(z(t),\lambda(t))\|^2_{g^K}&=&\big((I+ K((z(t),\lambda(t))) \nabla \mathcal{A}_H(z(t),\lambda(t)), \nabla \mathcal{A}_H(z(t),\lambda(t))\big)_{\mathcal{E}}\\
&\ge&\frac{1}{2}\big(\nabla \mathcal{A}_H(z(t),\lambda(t)), \nabla \mathcal{A}_H(z(t),\lambda(t))\big)_{\mathcal{E}},
\end{eqnarray*}
and thus
\begin{eqnarray}\label{eq:3.10}
b-a&\geq&\mathcal{A}_H(z(-\infty),\lambda(-\infty))
-\mathcal{A}_H(z(+\infty),\lambda(+\infty))\notag\\
&=&-\int^{+\infty}_{-\infty}\frac{d}{dt}\mathcal{A}_H(z(t),\lambda(t))dt\notag\\
&=&\int^{+\infty}_{-\infty}\|\nabla^K \mathcal{A}_H(z(t),\lambda(t))\|^2_{g^K}dt\notag\\
&\geq&\frac{1}{2}\int^{+\infty}_{-\infty}\|\nabla \mathcal{A}_H(z(t),\lambda(t))\|^2_{\mathcal{E}}dt\notag\\
&\geq&\frac{1}{2}\int^{s+\tau(s)}_s\|\nabla \mathcal{A}_H(z(t),\lambda(t))\|^2_{\mathcal{E}}dt\notag\\
&\geq&\frac{1}{2}\tau(s)\epsilon^2
\end{eqnarray}
if $\big\|\nabla \mathcal{A}_H\bigl(z(s),\lambda(s)\bigr)\big\|_{\mathcal{E}}\ge\epsilon$.
So for any $s\in\mathbb{R}$ we always have
\begin{eqnarray}\label{eq:3.11}
\tau(s)\leq\frac{2(b-a)}{\epsilon^2}.
\end{eqnarray}
Moreover, $|{A}_H(z(s+\tau(s)),\lambda(s+\tau(s)))|\leq \max\{|a|,|b|\}$ and
$\|\nabla {A}_H(z(s+\tau(s)),\lambda(s+\tau(s)))\|_{\mathcal{E}}\leq\epsilon$.
It follows from
Corollary~\ref{coro:3.1} that
\begin{eqnarray}\label{eq:3.12}
\|z(s+\tau(s))\|_{E_s}<C\quad \hbox{and}\quad |\lambda(s+\tau(s))|<C.
\end{eqnarray}
Since (\ref{met:3}) implies that
$((I+K)^{-1}w,w)_{\mathcal{E}}\geq\frac{2}{9}\|w\|^2_{\mathcal{E}}$ for any $w\in\mathcal{E}$,
we get
\begin{eqnarray*}
\|\nabla^K \mathcal{A}_H(z(t),\lambda(t))\|^2_{\mathcal{E}}&\le&\frac{9}{2}
\big(\nabla^K \mathcal{A}_H(z(t),\lambda(t)), (I+ K((z(t),\lambda(t)))^{-1} \nabla^K \mathcal{A}_H(z(t),\lambda(t))\big)_{\mathcal{E}}\\
&=&\frac{9}{2}\|\nabla^K \mathcal{A}_H(z(t),\lambda(t))\|^2_{g^K}.
\end{eqnarray*}
This and $(z^\prime(t),\lambda^\prime(t))=-\nabla^K \mathcal{A}_H(z(t),\lambda(t))$ lead to
\begin{eqnarray}\label{eq:3.13}
\int^{+\infty}_{-\infty}\|z^\prime(t)\|^2_{E_s}dt+\int^{+\infty}_{-\infty}
|\lambda^\prime(t)|^2dt&=&\int^{+\infty}_{-\infty}\|\nabla^K \mathcal{A}_H(z(t),\lambda(t))\|^2_{\mathcal{E}}dt\notag\\
&\leq& 5\int^{+\infty}_{-\infty}\|\nabla^K \mathcal{A}_H(z(t),\lambda(t))\|^2_{g^K}dt\notag\\
&\leq&5(b-a).
\end{eqnarray}
Using this, (\ref{eq:3.11})-(\ref{eq:3.12}) and  the H\"{o}lder inequality, we estimate
\begin{eqnarray}\label{eq:3.14}
\|z(s)\|&\leq& \|z(s+\tau(s))\|+\int^{s+\tau(s)}_s\|z^\prime(t)\|dt\notag\\
&\leq&C+\sqrt{5(b-a)}\sqrt{\tau(s)}\notag\\
&\leq&C+\frac{5(b-a)}{\epsilon}:=\frac{C_1}{\sqrt{2}}
\end{eqnarray}
and
\begin{eqnarray}\label{eq:3.15}
|\lambda(s)|&\leq& |\lambda(s+\tau(s))|+\int^{s+\tau(s)}_s|\lambda^\prime(t)|dt\notag\\
&\leq&C+\sqrt{5(b-a)}\sqrt{\tau(s)}\leq \frac{C_1}{\sqrt{2}}.
\end{eqnarray}
\qed


\subsection{Boundedness for the non-autonomous flow}\label{sec:3.3}

Suppose that ${K}\in C^0(\mathbb{R},\mathbf{K})$ and $H\in C^2(\mathbb{R}\times\Sigma M\oplus\Sigma M)$
satisfy
\begin{description}
\item[(i)] ${K}_t(\cdot):={K}(t,\cdot)$ is equal to $K_0$ for $t\le 0$, and $K_1$ for $t\ge 1$;
\item[(ii)] ${H}_t(\cdot):={H}(t,\cdot)$ is equal to $H_0$ for $t\le 0$, and $H_1$ for $t\ge 1$,
and satisfies $({\bf H}1)-({\bf H}2)$ and $({\bf H}4)$ for all $t\in[0,1]$;
\item[(iii)] there exists a constant $A$ such that
\begin{equation}\label{eq:3.3.1}
\int_{-\infty}^{+\infty}\int_M
\bigg|\frac{\partial H}{\partial t}(t,x,z(x))\bigg|dxdt\le A,\quad\forall z\in E_s.
\end{equation}
\end{description}

For such a pair $(H,K)$  we shall prove the boundedness on $\mathcal{E}$ of solutions of the following nonautonomous system
\begin{eqnarray} \label{eq:3.3.2}
\frac{dw(t)}{dt}+(I+K(t,w(t)))\nabla \mathcal{A}_H(t,x,w(t))=0.
\end{eqnarray}

\begin{proposition}\label{prop:3.3}
Fix a pair $(H,K)$ as above. Let  $w(t)=(z(t),\lambda(t))$ be any solution of (\ref{eq:3.3.2}) with $\lim\limits_{t\to\pm\infty}\mathcal{A}_{H_t}(z(t),\lambda(t))\in[a,b]$ for some $a<b$,
and let $\epsilon$ be as in Corollary~\ref{coro:3.1}. If $A<\epsilon/5$, then there exists a constant $C>0$ only depending on $a$ and $b$ such that $|\lambda(t)|<C$ and $\|z(t)\|_{E_s}<C$ for any $t\in\mathbb{R}$.
\end{proposition}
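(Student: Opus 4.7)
The plan mirrors the proof of Proposition~\ref{prop:3.2}, but with a bootstrap argument to handle the loss of monotonicity of $\mathcal{A}_{H_t}$ along the flow. The new difficulty is that differentiating $\mathcal{A}_{H_t}(w(t))$ produces an error term involving $\lambda(t)$, which is precisely what we want to bound; the smallness assumption $A<\epsilon/5$ is what makes the resulting self-referential inequality for $\Lambda:=\sup_{t\in\mathbb{R}}|\lambda(t)|$ solvable.

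First I would differentiate along (\ref{eq:3.3.2}):
\begin{equation*}
\frac{d}{dt}\mathcal{A}_{H_t}(w(t))=-\|\nabla^{K_t}\mathcal{A}_{H_t}(w(t))\|^2_{g^{K_t}}-\lambda(t)\int_M\frac{\partial H}{\partial t}(t,x,z(x))\,dx,
\end{equation*}
provisionally treating $\Lambda$ as a parameter (a priori possibly infinite). Integrating over $\mathbb{R}$ and using $\lim_{t\to\pm\infty}\mathcal{A}_{H_t}(w(t))\in[a,b]$ together with (iii) yields
\begin{equation*}
\int_{\mathbb{R}}\|\nabla^{K_t}\mathcal{A}_{H_t}(w(t))\|^2_{g^{K_t}}\,dt\le (b-a)+\Lambda A,
\end{equation*}
and hence a pointwise bound $|\mathcal{A}_{H_t}(w(t))|\le R_0+2\Lambda A$ uniformly in $t$, with $R_0$ depending only on $a,b$.

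Next, exactly as in Proposition~\ref{prop:3.2}, I would set
\begin{equation*}
\tau(s):=\inf\bigl\{t\ge 0\,:\,\|\nabla\mathcal{A}_{H_{s+t}}(w(s+t))\|_{\mathcal{E}}<\epsilon\bigr\},
\end{equation*}
which is finite by integrability of $t\mapsto\|\nabla\mathcal{A}_{H_t}\|_{\mathcal{E}}^2$ together with continuity. The same computation as in (\ref{eq:3.10}) gives $\tau(s)\le 2((b-a)+\Lambda A)/\epsilon^2$. At time $s+\tau(s)$ both $|\mathcal{A}_{H_t}(w)|\le R_0+2\Lambda A$ and $\|\nabla\mathcal{A}_{H_t}(w)\|_{\mathcal{E}}\le\epsilon$ hold, so Corollary~\ref{coro:3.1} applies and $\|w(s+\tau(s))\|_{\mathcal{E}}\le C(R_0+2\Lambda A)$. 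Here I would revisit the proof of Proposition~\ref{prop:3.1}: feeding $|\mathcal{A}_H|\le R$ and $\|\nabla\mathcal{A}_H\|\le\epsilon$ into (\ref{eq:3.4}) and (\ref{eq:3.7}) and rearranging shows that $C(R)$ is actually linear in $R$, i.e.\ $C(R)\le C_0(1+R)$ with $C_0$ depending only on $H$ and $\epsilon$.

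Finally I would combine $\int_{\mathbb{R}}\|w'(t)\|_{\mathcal{E}}^2\,dt\le \tfrac{9}{2}((b-a)+\Lambda A)$ (from $((I+K)^{-1}w,w)\ge\tfrac{2}{9}\|w\|_{\mathcal{E}}^2$) with the Cauchy--Schwarz estimate
\begin{equation*}
\|w(s)\|_{\mathcal{E}}\le\|w(s+\tau(s))\|_{\mathcal{E}}+\sqrt{\tau(s)}\,\Bigl(\int_s^{s+\tau(s)}\|w'\|_{\mathcal{E}}^2\,dt\Bigr)^{1/2},
\end{equation*}
as in Proposition~\ref{prop:3.2}. Plugging in and taking the supremum over $s$ gives an inequality of the form
\begin{equation*}
\Lambda\le C_1+C_2\,\frac{\Lambda A}{\epsilon},
\end{equation*}
with constants $C_1,C_2$ depending only on $a,b,H,K$. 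Tracking the numerical constants carefully, the restriction $A<\epsilon/5$ is precisely calibrated so that $C_2 A/\epsilon<1$, which lets us absorb the $\Lambda$ term on the left and deduce $\Lambda\le C$. The identical chain of estimates then bounds $\|z(s)\|_{E_s}$ uniformly.

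The main obstacle is the bookkeeping in the bootstrap: one has to verify by inspection of Proposition~\ref{prop:3.1} that $C$ in Corollary~\ref{coro:3.1} really grows at most linearly in $R$, and one must track all numerical constants carefully so that the aggregate coefficient of $\Lambda$ on the right is strictly less than $1$ exactly under the hypothesis $A<\epsilon/5$. A secondary technical point is ensuring $\tau(s)$ is finite for \emph{every} $s$ (not merely almost every), which follows from continuity of $t\mapsto\|\nabla\mathcal{A}_{H_t}(w(t))\|_{\mathcal{E}}$ combined with the $L^2$-integrability of this quantity on $\mathbb{R}$.
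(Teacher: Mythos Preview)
Your approach is exactly the paper's: differentiate $\mathcal{A}_{H_t}(w(t))$ along the flow to obtain the energy estimate $\int_{\mathbb{R}}\|\nabla^{K_t}\mathcal{A}_{H_t}\|_{g^{K_t}}^2\le (b-a)+A\Lambda$, define $\tau(s)$ as in Proposition~\ref{prop:3.2}, bound $\tau(s)\le 2((b-a)+A\Lambda)/\epsilon^2$, apply Corollary~\ref{coro:3.1} at time $s+\tau(s)$, and close a self-referential inequality for $\Lambda=\|\lambda\|_\infty$.

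You are in fact more careful than the paper on one point: the paper simply asserts $|\lambda(s+\tau(s))|\le C$ with $C$ depending only on $a,b$, whereas you correctly note that applying Corollary~\ref{coro:3.1} at that time requires an action bound $|\mathcal{A}_{H_t}(w)|\le R_0+2A\Lambda$ which itself involves $\Lambda$, and you handle this by checking that $C(R)$ in Corollary~\ref{coro:3.1} is affine in $R$. That argument is sound. But once this extra contribution is included, the coefficient of $\Lambda$ on the right-hand side becomes $(2C_0+5/\epsilon)A$ rather than just $5A/\epsilon$, so your claim that ``$A<\epsilon/5$ is precisely calibrated'' is a bit optimistic in your own version---the paper gets the clean $\epsilon/5$ threshold precisely by treating $|\lambda(s+\tau(s))|$ as independent of $\Lambda$. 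This is a bookkeeping nuance rather than a genuine gap; a marginally smaller $A$ would close your bootstrap. One further small point: you should note explicitly that $\Lambda<\infty$ a priori (from continuity of $\lambda$ and convergence at $\pm\infty$) before manipulating the self-referential inequality.
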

\proof
For a fixed $t$, we denote the gradient of $\mathcal{A}_{H_t}$ with respect to $g^{K_t}$ by $\nabla^{K_t}\mathcal{A}_{H_t}$. Since
\begin{equation}\label{eq:3.3.3}
\frac{d}{dt}\mathcal{A}_{H_t}(w(t))=-\|\nabla^{K_t}\mathcal{A}_{H_t}
(w(t))\|_{g^K}-\lambda(t)\int_M\frac{\partial H}{\partial t}(t,x,z(t))dx,
\end{equation}
we have
\begin{eqnarray} \label{eq:3.3.4}
&&\int^{+\infty}_{-\infty}\|\nabla^{K_t}\mathcal{A}_{H_t}
(w(t))\|_{g^K}^2dt\\
&\leq&\lim\limits_{t\to-\infty}\mathcal{A}_{H_t}(w(t))
-\lim\limits_{t\to+\infty}\mathcal{A}_{H_t}(w(t))+\int_M
\int^{+\infty}_{-\infty}|\lambda(t)|\bigg|\frac{\partial H}{\partial t}(t,x,z)\bigg|dt dx\notag\\
&\leq&b-a+A\|\lambda\|_\infty.
\end{eqnarray}
Defining $\tau(s)$ for $s\in\mathbb{R}$ as in the proof of Proposition~\ref{prop:3.2}, we have
\begin{eqnarray} \label{eq:3.3.5}
\epsilon^2\tau(s)&\leq&\int^{s+\tau(s)}_s\|\nabla\mathcal{A}_{H_t}
(w(t))\|^2_{\mathcal{E}}dt\notag\\
&\leq&2\int^{s+\tau(s)}_s\|\nabla^{K_t}\mathcal{A}_{H_t}
(w(t))\|_{g^K}^2dt\notag\\
&\leq&2(b-a+A\|\lambda\|_\infty),
\end{eqnarray}
and hence
\begin{equation}\label{eq:3.3.6}
\tau(s)\leq\frac{2(b-a+A\|\lambda\|_\infty)}{\epsilon^2}.
\end{equation}
Then as in (\ref{eq:3.13}) we have
\begin{eqnarray*}
\int^{+\infty}_{-\infty}\|z^\prime(t)\|^2_{E_s}dt+\int^{+\infty}_{-\infty}
|\lambda^\prime(t)|^2dt&=&\int^{+\infty}_{-\infty}\|\nabla^K \mathcal{A}_{H_t}(z(t),\lambda(t))\|^2_{\mathcal{E}}dt\notag\\
&\leq& 5\int^{+\infty}_{-\infty}\|\nabla^K \mathcal{A}_{H_t}(z(t),\lambda(t))\|^2_{g^K}dt\notag\\
&\leq&5(b-a+ A\|\lambda\|_\infty).
\end{eqnarray*}
This and (\ref{eq:3.3.6}) lead to
\begin{eqnarray*}
|\lambda(s)|&=&\Big|\lambda(s+\tau(s))+\int^{s+\tau(s)}_s
\lambda^\prime(t)dt\Big|\notag\\
&\leq&|\lambda(s+\tau(s))|+\int^{s+\tau(s)}_s\|\nabla\mathcal{A}^K_
{H_t}(w(t))\|_\mathcal{E}dt\notag\\
&\leq&C+\sqrt{2\tau(s)}\sqrt{5(b-a+A\|\lambda\|_\infty)}
\notag\\
&\leq&C+\frac{5(b-a+A\|\lambda\|_\infty)}{\epsilon}\quad\forall s\in\mathbb{R}.
\end{eqnarray*}
It follows from $A<\epsilon/5$ that
\begin{equation}\notag
\|\lambda\|_\infty\leq\frac{C\epsilon+5(b-a)}{\epsilon-5A}.
\end{equation}
Similarly, we have the uniform bound for $\|z(t)\|$. In fact, the following estimate holds
\begin{eqnarray*}
\|z(s)\|\leq\|z(s+\tau(s))\|+\frac{5(b-a+A\|\lambda\|_\infty)}{\epsilon}.
\end{eqnarray*}
From Corollary~\ref{coro:3.1}, we obtain $\|z(s+\tau(s))\|\leq C$ and thus $\|z(t)\|$ is uniformly bounded.
\qed

\subsection{Boundness in H\"{o}lder spaces}\label{sec:3.4}
In Proposition~\ref{prop:3.3} we get an uniform bound for all nonautonomous flows under certain conditions.
Following the ideas in \cite{AnV} we shall show that $z(t)$ is also uniformly bounded in H\"{o}lder spaces $C^{i,\alpha}(M,\Sigma M)$,
 $i=0,1$ (for the explicit definition see~\cite[Chapter 3]{Amm}) through the bootstrap argument.

\begin{proposition}\label{prop:3.4}
Assume $H$ is as in Proposition~\ref{prop:3.3}. Let $w(t)=(z(t),\lambda(t))$ be a solution of (\ref{eq:3.3.2}). Then $z(t)$ is bounded in $C^\alpha(M,\Sigma M)\oplus C^\beta(M,\Sigma M)$ with $0<\alpha<\min\{1,2s\}$ and $0<\beta<\min\{1,2(1-s)\}$. In particular, for some constant $C>0$ we have
 $$
\sup\limits_{-\infty<t<\infty}\|z(t)\|_{C^\alpha\oplus C^\beta}<C\sup\limits_{-\infty<t<\infty}\|z(t)\|_{E_s}.
$$
\end{proposition}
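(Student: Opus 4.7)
The plan is to treat (\ref{eq:3.3.2}) at each fixed $t$ as an elliptic Dirac system with a lower-order perturbation involving the time derivative, and then run a bootstrap argument adapted to the anisotropy of $E_s=H^s\times H^{1-s}$.

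The starting data are the uniform $E_s$-bound on $z(t)$ from Proposition~\ref{prop:3.3} and the uniform $C^2$-bound (\ref{eq:2.12}) on $\mathrm{Pr}_1(K\nabla\mathcal{A}_{H_t})$. First I would observe that $\partial_t z(t)$ is itself uniformly bounded in $E_s$: by (\ref{eq:3.3.2}), $\partial_t z=-\mathcal{D}_s\{Lz-\lambda H_z\}-\mathrm{Pr}_1(K\nabla\mathcal{A}_{H_t})$, and both summands are uniformly bounded in $E_s$ (the first because $\mathcal{D}_sL$ is an isometry of $E_s$ and, via (\textbf{H}4), $\mathcal{D}_s H_z(x,z(t))$ is uniformly bounded in $E_s$ by the estimate (\ref{eq:3.5}); the second by (\ref{eq:2.12})). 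Expanding (\ref{eq:3.3.2}) componentwise via (\ref{eq:2.8}) and using that $D$ commutes with the functional calculus $|D|^{2r}$, I would rearrange it into the Dirac equations
\begin{align*}
Dv(t) &= \lambda(t)\,H_u(x,z(t)) - |D|^{2s}\bigl(\partial_t u(t)+P_u(t)\bigr),\\
Du(t) &= \lambda(t)\,H_v(x,z(t)) - |D|^{2(1-s)}\bigl(\partial_t v(t)+P_v(t)\bigr),
\end{align*}
in which $(P_u,P_v)=\mathrm{Pr}_1(K(t,w)\nabla\mathcal{A}_{H_t})$ is uniformly $C^2$-bounded, and the $\partial_t$-terms are controlled by the preceding observation.

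Next I would bootstrap. Sobolev embeds $H^s\hookrightarrow L^{2n/(n-2s)}$ and $H^{1-s}\hookrightarrow L^{2n/(n-2(1-s))}$, so $u(t)$ and $v(t)$ start in concrete Lebesgue spaces uniformly in $t$. The subcritical growth (\textbf{H}2) together with the choice (\ref{eq:1.4.1}) of $s$ places $H_u(x,z(t))$ and $H_v(x,z(t))$ in some $L^{r_1}$ with $r_1>1$; the right-hand sides of the above system then lie in a definite Sobolev/H\"older class. Invoking the standard elliptic regularity for the Dirac operator, $Du\in L^r\Rightarrow u\in W^{1,r}$, followed by Sobolev embedding $W^{1,r}\hookrightarrow L^{r^\ast}$ with $r^\ast=nr/(n-r)$ while $r<n$, I would iteratively improve the integrability of $z(t)$. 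After finitely many steps the exponent passes $n$, and Morrey's inequality $W^{1,r}\hookrightarrow C^{1-n/r}$ yields H\"older control. The sharp exponents $\alpha<\min\{1,2s\}$ and $\beta<\min\{1,2(1-s)\}$ emerge from balancing the gain of one derivative from $D$ on the left against the loss of $2s$, respectively $2(1-s)$, from $|D|^{2s}$, respectively $|D|^{2(1-s)}$, on the right. Uniformity in $t$ is automatic, since every step of the bootstrap produces a uniform-in-$t$ estimate whose final constant depends only on $\sup_t\|z(t)\|_{E_s}$.

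The main obstacle is the time-derivative term: $|D|^{2s}\partial_t u(t)$ lies a priori only in the \emph{negative}-order space $H^{-s}$ (and symmetrically for the other component), so one must ensure that the bootstrap is not derailed by this negative regularity. The uniform $E_s$-bound on $\partial_t z$ established at the outset is precisely what allows one to treat these terms as genuinely lower-order perturbations rather than losses; translating the argument of \cite{AnV} to the present anisotropic setting then reduces to careful bookkeeping of the Sobolev and H\"older scales at each iteration, which is exactly what pins down the optimal exponents $\min\{1,2s\}$ and $\min\{1,2(1-s)\}$.
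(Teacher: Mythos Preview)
Your fixed-time elliptic rearrangement is circular and the bootstrap does not move. Once you write $Dv=\lambda H_u-|D|^{2s}(\partial_t u+P_u)$, the term $|D|^{2s}\partial_t u$ lies only in $H^{-s}$ from the $E_s$-bound on $\partial_t u$, and this is \emph{exactly} the regularity of $Dv$ at the starting level $v\in H^{1-s}$. So the right-hand side never becomes smoother than $H^{-s}$ and $v$ never leaves $H^{1-s}$. Worse, if you try to upgrade $\partial_t u$ by reusing the flow equation, you find $|D|^{2s}(\partial_t u+P_u)=-Dv+\lambda H_u$, and substituting back yields the tautology $Dv=Dv$: your ``rearrangement'' carries no elliptic information beyond the original flow equation. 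The $E_s$-bound on $\partial_t z$ you invoke does \emph{not} make this term lower-order; it is same-order, and that is exactly why a frozen-time approach cannot work here.

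The paper avoids this trap by treating the problem parabolically rather than elliptically. Since $(\mathcal{D}_sL)^2=\mathrm{Id}_{E_s}$, the evolution operator $\frac{d}{dt}+\mathcal{D}_sL$ has the explicit fundamental solution $G(t)=e^{-t}\chi_{\mathbb{R}_+}P_--e^{t}\chi_{\mathbb{R}_-}P_+$, and Duhamel's formula gives
\[
z(t)=\int_{-\infty}^{+\infty}G(t-\tau)\Bigl\{\lambda(\tau)\mathcal{D}_sH_z(\tau,x,z(\tau))
+\mathrm{Pr}_1\bigl(K(\tau,w(\tau))\nabla\mathcal{A}_H(w(\tau))\bigr)\Bigr\}\,d\tau.
\]
The crucial gain is that the $\mathcal{D}_sLz$ piece (equivalently your $|D|^{2s}\partial_t u$) is absorbed into the linear evolution and no longer appears as a source. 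The remaining source is controlled by the smoothing estimate (\ref{eq:3.4.1}), in which $|D|^{-2s}$, $|D|^{-1}$ and $|D|^{-2(1-s)}$ act on $|H_u|$, $|H_v|$ with an integrable $e^{-|t-\tau|}$ weight in time. Lemma~\ref{lem:3.1} then feeds a genuine bootstrap (Lemma~\ref{lem:3.2}) in the Lebesgue exponents $(\alpha,\beta)\mapsto(\alpha^*,\beta^*)$; the subcritical condition (\ref{eq:1.4}) guarantees strict improvement at each step starting from the Sobolev exponents $\alpha_1=\tfrac{2n}{n-2s}$, $\beta_1=\tfrac{2n}{n-2(1-s)}$, and after finitely many iterations the exponents turn negative, delivering the H\"older bounds with the stated ranges $\alpha<\min\{1,2s\}$, $\beta<\min\{1,2(1-s)\}$ coming precisely from the $|D|^{-2s}$ and $|D|^{-2(1-s)}$ gains.
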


For  $0<\alpha<1$ (resp. $1<\alpha<2$),
let $C^\alpha(M,\Sigma M)$ represent H\"{o}lder spaces $C^{0,\alpha}(M,\Sigma M)$ (resp. $C^{1,\alpha-1}(M,\Sigma M)$).

In order to prove the above proposition we give two lemmas.

\begin{lemma}\label{lem:3.1}
Let $0<r<\infty$ and $1<\alpha,\beta<\infty$. Assume $u\in L^\alpha(M,\Sigma M)$.

 If $\alpha <\frac{n}{r}$ then $|D|^{-r}u\in L^\beta(M,\Sigma M)$ with $\frac{1}{\beta}=\frac{1}{\alpha}-\frac{r}{n}$.

 If $\alpha>\frac{n}{r}$ then $|D|^{-r}u\in C^{r-\frac{n}{\alpha}}(M,\Sigma M)$.
\end{lemma}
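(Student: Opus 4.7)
The plan is to combine two classical ingredients: the $L^\alpha$-mapping properties of $|D|^{-r}$, viewed as a pseudodifferential operator of order $-r$ on the closed spin manifold $M$, and the Sobolev embedding theorem for Bessel potential (fractional $L^p$-Sobolev) spaces. Denote by $H^{r,\alpha}(M,\Sigma M)$ the Bessel potential space, i.e.\ the completion of $C^\infty(M,\Sigma M)$ in the norm $\|u\|_{H^{r,\alpha}} = \||D|^r u\|_{L^\alpha} + \|u\|_{L^\alpha}$.

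\textbf{Step 1: the operator $|D|^{-r}$ maps $L^\alpha$ into $H^{r,\alpha}$ continuously.} Because $D$ is an elliptic first-order formally self-adjoint differential operator with $0\notin\mathrm{Spec}(D)$, its absolute value $|D|=(D^2)^{1/2}$, defined via spectral calculus on the $L^2$-basis $(\psi_k)$, coincides up to a smoothing operator with a classical pseudodifferential operator of order $1$ whose principal symbol is $|\xi|_g\cdot\mathrm{Id}$. Consequently $|D|^{-r}$ belongs to the class $\Psi^{-r}_{cl}(M;\Sigma M)$, and the classical $L^p$-boundedness theorem for pseudodifferential operators on compact manifolds (see e.g.\ Taylor or H\"ormander's Volume III) yields that $|D|^{-r}: L^\alpha(M,\Sigma M)\to H^{r,\alpha}(M,\Sigma M)$ is bounded for every $\alpha\in(1,\infty)$. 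Equivalently, $\||D|^r(|D|^{-r}u)\|_{L^\alpha} = \|u\|_{L^\alpha}$ together with $\||D|^{-r}u\|_{L^\alpha}\le C\|u\|_{L^\alpha}$ (the latter since $|D|^{-r}$ is of negative order, hence smoothing in the $L^p$-scale).

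\textbf{Step 2: Sobolev embedding.} Apply the classical Sobolev embedding theorem for Bessel potential spaces on the compact manifold $M$ (see \cite{Amm} or the standard references):
\begin{itemize}
\item If $r\alpha<n$ (subcritical case) then $H^{r,\alpha}(M,\Sigma M)\hookrightarrow L^\beta(M,\Sigma M)$ with $\tfrac{1}{\beta}=\tfrac{1}{\alpha}-\tfrac{r}{n}$.
\item If $r\alpha>n$ (supercritical case) then $H^{r,\alpha}(M,\Sigma M)\hookrightarrow C^{r-n/\alpha}(M,\Sigma M)$, where the H\"older exponent $r-n/\alpha$ is interpreted in accordance with the convention stated right after the proposition (so a value in $(0,1)$ gives $C^{0,r-n/\alpha}$, and a value in $(1,2)$ gives $C^{1,r-n/\alpha-1}$).
\end{itemize}
Composing Steps 1 and 2 immediately gives both assertions.

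The only real obstacle is making Step 1 precise for non-integer $r$. One option is to invoke the full pseudodifferential calculus as indicated above; an alternative (perhaps more self-contained) route is to write $|D|^{-r}$ as a Mellin-type integral $|D|^{-r}=c_r\int_0^\infty t^{r-1}e^{-t|D|}\,dt$ (absolutely convergent thanks to the spectral gap guaranteed by $0\notin\mathrm{Spec}(D)$) and compare the kernel of $e^{-t|D|}$ with a Euclidean Riesz potential in small-scale coordinate patches, then patch via a partition of unity. Either approach reduces the $L^\alpha\to H^{r,\alpha}$ bound to the $L^\alpha\to L^\alpha$ boundedness of the Riesz transforms, a classical Calder\'on--Zygmund result. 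Once Step 1 is in hand, Step 2 is a direct quotation.
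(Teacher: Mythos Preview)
The paper does not supply its own proof of Lemma~\ref{lem:3.1}; it is simply stated as a known fact (essentially the spinor-bundle analogue of the $L^p$ Sobolev inequalities for fractional powers of an elliptic operator, cf.\ \cite{Amm}, and in the same spirit as the result used in \cite{AnV}) and then applied. Your argument is correct and is exactly the standard route one takes to justify such a statement: identify $|D|^{-r}$ with a classical pseudodifferential operator of order $-r$ (via Seeley's theorem on complex powers of elliptic operators), invoke the $L^\alpha$-boundedness of order-$0$ operators to land in the Bessel potential space $H^{r,\alpha}$, and then apply the Sobolev embedding. One small quibble: you write that $|D|$ ``coincides up to a smoothing operator'' with a classical $\Psi$DO of order $1$; in fact Seeley's theorem gives that $|D|=(D^2)^{1/2}$ \emph{is} a classical $\Psi$DO of order $1$ on the nose, so no correction term is needed.
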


Since $\mathcal{D}_sL:E_s\to E_s$ is an self-adjoint Fredholm  isometry operator and  \begin{equation}\notag
(\mathcal{D}_sL)\circ(\mathcal{D}_sL)=\begin{pmatrix}
|D|^{-2s}D|D|^{-2(1-s)}D&0 \\
 0&|D|^{-2(1-s)}D|D|^{-2s}D
\end{pmatrix}={\rm Id}_{E_s},
\end{equation}
we can split $E_s$ into
\begin{equation}\label{eq:3.4.0}
E_s=E_+\oplus E_-
\end{equation}
 with $(\mathcal{D}_s L)|_{E_\pm}=\pm{\rm Id}_{E_\pm}$.
  Denote  by $P_\pm$  projections onto the eigenspace $E_\pm$ respectively. Then the evolution operator $\frac{d}{dt}+\mathcal{D}_sL$ has a fundamental solution operator
\begin{equation}\notag
G(t)=e^{-t}\chi_{\mathbb{R}_+}(t)P_--e^t\chi_{\mathbb{R}_-}(t)P_+,
\end{equation}
which gives us a formulation for any solution of (\ref{eq:3.3.2})
\begin{equation}\notag
z(t)=\int_{-\infty}^{+\infty}G(t-s)\Big\{\lambda(s)\mathcal{D}_s
H_z(s,x,z(s))+{\rm Pr_1}\big(K(s,z(s))\nabla\mathcal{A}_H)(w(s)\big)\Big\}ds,
\end{equation}
where ${\rm Pr_1}\big(K(s,z(s))\nabla\mathcal{A}_H(w(s))\big)$ is uniformly bounded in $C^2(M,\Sigma M\oplus\Sigma M)$ by (\ref{eq:2.12}) and Proposition~\ref{prop:3.3}.
To show that the operator $G(t)\mathcal{D}_s$ has a regularizing effect,
note that the above projections $P_\pm$ onto $E_\pm$
can be written as
\begin{equation}\notag
P_\pm=\frac{1}{2}\begin{pmatrix}
I_+&\pm|D|^{-2s}D \\
\pm|D|^{-2(1-s)}D& I_-
\end{pmatrix},
\end{equation}
where $I_\pm$ represents the identity operator on $E_\pm$ respectively. It is not hard to estimate
\begin{equation}\label{eq:3.4.1}
|G(t)\mathcal{D}_sH_z(t,x,z(t))|\leq\frac{1}{2}e^{-|t|}
\begin{pmatrix}
|D|^{-2s}|H_u|+|D|^{-1}|H_v|\\
|D|^{-1}|H_u|+|D|^{-2(1-s)}|H_v|
\end{pmatrix}.
\end{equation}
Using this estimate, the assumption $({\bf H}2)$ and Lemma~\ref{lem:3.1}, we obtain the bootstrap lemma

\begin{lemma}\label{lem:3.2}
Let $(z(t),\lambda(t))$ be a solution of (\ref{eq:3.3.2}) with $[z]_{\alpha,\beta}<\infty$, where
$$
z(t)=(u(t,\cdot), v(t,\cdot))\quad\hbox{and}\quad
[z]_{\alpha,\beta}:=\sup \limits_{-\infty<t<+\infty}\\\{\|u(t,\cdot)\|
 _{L^\alpha}+\|v(t,\cdot)\|_{L^\beta}\}.
 $$
  Suppose that two real numbers $\alpha^*$ and $\beta^*$ are defined by
 \begin{eqnarray} \label{eq:3.4.2}
\left\{ \begin{array}{l}
\frac{1}{\alpha^*}=\max\big\{\frac{p}{\alpha}-\frac{2s}{n},\frac{p(q+1)}
{\beta(p+1)}-\frac{2s}{n},\frac{q(p+1)}{\alpha(q+1)}-\frac{1}{n},
\frac{q}{\beta}-\frac{1}{n}\big\}\\[10pt]
\frac{1}{\beta^*}=\max\big\{\frac{p}{\alpha}-\frac{1}{n},
\frac{p(q+1)}{\beta(p+1)}-\frac{1}{n},
\frac{q(p+1)}{\alpha(q+1)}-\frac{2(1-s)}{n},
\frac{q}{\beta}-\frac{2(1-s)}{n}\big\}.
\end{array} \right.
\end{eqnarray}
If $\alpha^*,\beta^*>0$, then $[z]_{\alpha^*,\beta^*}<\infty$.\\
If $\alpha^*<0$, then $u(t,\cdot)$ is uniformly bounded in $C^{-\frac{1}{\alpha^*}}(M,\Sigma M)$.\\
If $\beta^*<0$, then $v(t,\cdot)$ is uniformly bounded in $C^{-\frac{1}{\beta^*}}(M,\Sigma M)$.
\end{lemma}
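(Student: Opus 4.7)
The plan is to turn the flow equation (\ref{eq:3.3.2}) into the integral equation displayed just above the lemma, apply the pointwise bound (\ref{eq:3.4.1}) on $G(t)\mathcal{D}_s$, and then use the growth estimates (\textbf{H}2) together with the mapping properties of $|D|^{-r}$ from Lemma~\ref{lem:3.1} to upgrade the integrability of $z(t)$ from $(\alpha,\beta)$ to $(\alpha^*,\beta^*)$. Rename the time variable in the integral representation to $\tau$ to avoid clash with the Sobolev exponent $s$. By Proposition~\ref{prop:3.3}, $\lambda(\tau)$ is uniformly bounded, and by (\ref{eq:2.12}) the correction piece ${\rm Pr}_1(K\nabla\mathcal{A}_H)$ is uniformly $C^2$-bounded; combined with the $e^{-|t-\tau|}$ decay of $G$, that piece contributes only a harmless bounded summand to $z(t)$, so only the nonlinear integrand $\lambda(\tau)\mathcal{D}_s H_z(\tau,x,z(\tau))$ requires substantive estimates.

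For the $u$-component, (\ref{eq:3.4.1}) bounds the contribution pointwise by $e^{-|t-\tau|}\bigl(|D|^{-2s}|H_u|+|D|^{-1}|H_v|\bigr)$. Using (\textbf{H}2), one has $|H_u|\le c_1(1+|u|^p+|v|^{p(q+1)/(p+1)})$ and $|H_v|\le c_1(1+|u|^{q(p+1)/(q+1)}+|v|^q)$. Since $u(\tau,\cdot)\in L^\alpha$ and $v(\tau,\cdot)\in L^\beta$ uniformly in $\tau$, the four nonconstant summands lie uniformly in $L^{\alpha/p}$, $L^{\beta(p+1)/(p(q+1))}$, $L^{\alpha(q+1)/(q(p+1))}$, $L^{\beta/q}$ respectively. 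Feeding these into Lemma~\ref{lem:3.1} with $r=2s$ for the two $|D|^{-2s}|H_u|$ pieces and $r=1$ for the two $|D|^{-1}|H_v|$ pieces produces exactly the four candidate reciprocal exponents appearing in the first line of (\ref{eq:3.4.2}); the target integrability $1/\alpha^*$ is forced to be their maximum, i.e.\ the smallest Lebesgue exponent. The $v$-component is handled identically under $s\leftrightarrow 1-s$ and the swap of the roles of $H_u$ and $H_v$, giving the second line.

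The final step is a case split together with a time-convolution bound. Minkowski's integral inequality in $\tau$ against the integrable kernel $e^{-|t-\tau|}$ transfers each uniform-in-$\tau$ spatial norm to a uniform-in-$t$ spatial norm on $z(t)$. If $1/\alpha^*>0$, the Lebesgue clause of Lemma~\ref{lem:3.1} applies to each of the four contributions and gives $[z]_{\alpha^*,\beta^*}<\infty$. If $1/\alpha^*<0$, then every individual candidate in the max is strictly negative, which is precisely the statement that each associated Lebesgue exponent exceeds the threshold $n/r$ in the H\"older clause of Lemma~\ref{lem:3.1}; that clause then yields a uniform H\"older bound on $u(t,\cdot)$ with exponent $-1/\alpha^*$ as dictated by the lemma, and analogously on $v(t,\cdot)$ with exponent $-1/\beta^*$.

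The main obstacle is purely bookkeeping: tracking the four competing pairs (Lebesgue source, smoothing order $r$) for each component, verifying that in the negative-max regime all four candidates simultaneously fall into the H\"older clause of Lemma~\ref{lem:3.1} (which is automatic since the max being negative forces each summand to be negative), and ensuring the time-convolution step preserves spatial $L^\infty_t$-uniformity (immediate from Minkowski). Beyond this, the proof is a standard application of the smoothing action of $G(t)\mathcal{D}_s$ against a superlinear nonlinearity whose growth is exactly calibrated to (\textbf{H}2); the iteration generating Proposition~\ref{prop:3.4} starts from the Sobolev embedding $E_s\hookrightarrow L^{p+1}\oplus L^{q+1}$ and converts finitely many improvements into the H\"older bound stated there.
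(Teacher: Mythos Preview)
Your proposal is correct and follows exactly the approach the paper indicates: the paper gives only a one-line sketch (``Using this estimate, the assumption $({\bf H}2)$ and Lemma~\ref{lem:3.1}, we obtain the bootstrap lemma''), and you have filled in precisely those ingredients --- the integral representation via $G$, the pointwise bound (\ref{eq:3.4.1}), the growth rates from $({\bf H}2)$, and the mapping properties of $|D|^{-r}$ from Lemma~\ref{lem:3.1}, together with the observation that the $K$-piece is already $C^2$-bounded by (\ref{eq:2.12}) and hence harmless. Your bookkeeping of the four competing exponents for each component reproduces (\ref{eq:3.4.2}) exactly, and the Minkowski/convolution step to pass from uniform-in-$\tau$ spatial norms to uniform-in-$t$ is the intended mechanism.
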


\noindent\textbf{Proof of Proposition~\ref{prop:3.4}}. Since $z(t)$ is uniformly bounded in $E_s$, we have $[z]_{\alpha_1,\beta_1}<\infty$ with $\alpha_1=\frac{2n}{n-2s}$ and $\beta_1=\frac{2n}{n-2(1-s)}$ because of the Sobolev embedding theorems.
By Lemma~\ref{lem:3.2} the condition $[z]_{\alpha,\beta}<\infty$ implies $[z]_{\alpha^*,\beta^*}<\infty$. It follows from the subcritical condition (\ref{eq:1.4}) that $\alpha^*>\alpha$ and $\beta^*>\beta$ whenever $\alpha\geq\alpha_1$ and $\beta\geq\beta_1$. After $k$ times of iterations one gets $[z]_{\alpha_k,\beta_k}<\infty$ with $\alpha_{k+1}<0$ and $\beta_{k+1}<0$. Using the bootstrap lemma again  we arrive at the conclusion of Proposition~\ref{prop:3.4}.
\qed

\section{ Relative index and Moduli space of trajectories }\label{sec:4}
\setcounter{equation}{0}
\subsection{Morse functions and the relative index}\label{sec:4.1}

According to the usual definition of the Morse index of a critical point,
 the Rabinowitz-Floer functional $\mathcal{A}_H$ has infinite index and co-index at each critical point.
  In order to give a natural grading of the Rabinowitz-Floer homology groups
   we will adopt the way of Abbondandolo \cite{AbM}.

If $(z,\lambda)$ is a critical of $\mathcal{A}_H$, then the Hessian of $\mathcal{A}_H$ at $(z,\lambda)$ can be written as
\begin{equation}\notag
{\rm Hess}\mathcal{A}_H(z,\lambda)=\begin{pmatrix}
\mathcal{D}_s L-\lambda\mathcal{D}_sH_{zz}(x,z)&-\mathcal{D}_sH_z(x,z) \\
-\big(\mathcal{D}_sH_z(x,z)\big)^*&0
\end{pmatrix}.
\end{equation}
\begin{definition}\label{def:4.1}
{\rm A critical point $(z,\lambda)$ of $\mathcal{A}_H$ is called  {\it nondegenerate}  if the Hessian operator ${\rm Hess}\mathcal{A}_H(z,\lambda):\mathcal{E}\to \mathcal{E}$
is a bounded linear bijective operator. The functional $\mathcal{A}_H$
is said to be \emph{Morse} if all critical points of it are nondegenerate.}
\end{definition}

\begin{remark}
{\rm In our case, we always view $\mathcal{E}$ as a real Hilbert space with the inner product
$(\cdot,\cdot)_\mathcal{E}$ defined by (\ref{eq:2.5}). In the following, for a subspace $V$ of $\mathcal{E}$, ${\rm dim}(V)$ stands for the real dimension of $V$.}
\end{remark}

\begin{definition}\label{def:4.2}
{\rm Let $U$ and $V$ be two closed subspaces in a Hilbert space $H$. Denote $P_U$ and $P_V$ the orthogonal projections onto $U$ and $V$ respectively. If $P_U-P_V$ is compact on $H$,  $U$ is said
to be {\it commensurable} to $V$, and define the relative dimension of $U$ and $V$ as
$${\rm dim}(U,V)={\rm dim}(U\cap V^\perp)-{\rm dim}(U^\perp\cap V).$$}
\end{definition}

One can check that it is well-defined and finite.
Moreover, if $U$, $V$ and $W$ are three each other commensurable closed subspaces in $H$,
then
\begin{equation}\label{eq:4.1.1}
{\rm dim}(U,V)={\rm dim}(U, W)+{\rm dim}(W,V)
\end{equation}
(cf. \cite[Section 2]{AbM}).

\begin{lemma}\label{lem:4.1}
Suppose $A$ and $B$ are self-adjoint Fredholm operators on a Hilbert space $H$ with $A-B$ compact. Then the negative (resp. positive ) eigenspaces of $B$ is commensurable with the negative (resp. positive ) eigenspaces of $A$.
\end{lemma}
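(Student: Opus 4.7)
The plan is to show that the orthogonal projections $P_A^-$ and $P_B^-$ onto the negative spectral subspaces of $A$ and $B$ differ by a compact operator; by Definition~\ref{def:4.2} this is exactly the commensurability assertion. The positive case follows by replacing $A,B$ with $-A,-B$. Since the Hessians involved in the paper are bounded on $\mathcal{E}$, I will work with bounded self-adjoint operators.

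First, I would invoke Weyl's theorem on the invariance of the essential spectrum under compact perturbations to conclude $\sigma_{\mathrm{ess}}(A)=\sigma_{\mathrm{ess}}(B)$. The Fredholm hypothesis is equivalent to $0\notin\sigma_{\mathrm{ess}}(A)\cup\sigma_{\mathrm{ess}}(B)$, so there exists $\epsilon>0$ such that
\[
\sigma(A)\cap(-2\epsilon,2\epsilon)\subseteq\{0\}\quad\text{and}\quad \sigma(B)\cap(-2\epsilon,2\epsilon)\subseteq\{0\},
\]
with the possible $0$-eigenspaces finite-dimensional. Next, choose a continuous $\phi:\mathbb{R}\to[0,1]$ with $\phi\equiv 1$ on $(-\infty,-\epsilon]$ and $\phi\equiv 0$ on $[-\epsilon/2,+\infty)$, interpolated linearly on $[-\epsilon,-\epsilon/2]$. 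Then $\phi$ coincides with the characteristic function $\chi_{(-\infty,0)}$ on every point of $\sigma(A)$ (and of $\sigma(B)$), so the Borel functional calculus gives
\[
\phi(A)=\chi_{(-\infty,0)}(A)=P_A^-,\qquad \phi(B)=\chi_{(-\infty,0)}(B)=P_B^-.
\]
It therefore suffices to prove that $\phi(A)-\phi(B)$ is compact.

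For this, I would use a standard polynomial approximation argument. For any polynomial $p(x)=\sum c_k x^k$, the telescoping identity
\[
A^k-B^k=\sum_{i=0}^{k-1}A^i(A-B)B^{k-1-i}
\]
combined with the compactness of $A-B$ and the boundedness of $A,B$ shows $p(A)-p(B)$ is compact. By the Weierstrass approximation theorem, pick polynomials $p_n$ converging to $\phi$ uniformly on the compact interval $[-\|A\|-\|B\|-1,\|A\|+\|B\|+1]$; by the spectral theorem $p_n(A)\to\phi(A)$ and $p_n(B)\to\phi(B)$ in operator norm. Hence $\phi(A)-\phi(B)$ is a norm limit of compact operators, and so is compact itself. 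This yields $P_A^--P_B^-\in\mathcal{K}(H)$, completing the negative-eigenspace case.

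The main obstacle is the discontinuity of $\chi_{(-\infty,0)}$ at the origin, which precludes a direct polynomial approximation. The Fredholm hypothesis is precisely what dissolves this obstruction: it forces $0$ to be isolated in (or absent from) both spectra, which allows $\chi_{(-\infty,0)}$ to be replaced by the continuous surrogate $\phi$ without altering the functional-calculus output. Everything else is a routine application of Weyl's theorem and Weierstrass approximation.
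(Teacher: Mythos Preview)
The paper states Lemma~\ref{lem:4.1} without proof, treating it as a standard fact (it is essentially a consequence of the functional-calculus arguments in \cite{AbM}). Your argument is correct and self-contained for bounded self-adjoint operators, which is exactly the setting needed in Lemma~\ref{lem:4.2}: the Fredholm hypothesis isolates $0$ in both spectra, allowing the discontinuous spectral projection $\chi_{(-\infty,0)}$ to be replaced by a continuous $\phi$, after which the Weierstrass/telescoping argument shows $\phi(A)-\phi(B)$ is compact. One minor remark: you should perhaps state explicitly that a bounded self-adjoint Fredholm operator has $0$ either absent from its spectrum or present only as an isolated eigenvalue of finite multiplicity---you use this to produce the gap $(-2\epsilon,2\epsilon)$, and it follows because for self-adjoint operators $\sigma_{\mathrm{ess}}$ coincides with the non-isolated spectrum together with eigenvalues of infinite multiplicity.
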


\begin{definition}\label{def:4.3}
{\rm Let $E_-$ be as in (\ref{eq:3.4.0}). Denote $V^-(z,\lambda)$ the maximal negative definite subspace in $\mathcal{E}$ of the Hessian of $\mathcal{A}_H$ at a critical point $(z,\lambda)$. We define the relative index as
\begin{equation}\label{eq:4.1.2}
i_{\rm rel}(z,\lambda)={\rm dim}(V^-(z,\lambda),E_-\times\{0\}).
\end{equation}}
\end{definition}
\begin{lemma}\label{lem:4.2}
Under the assumptions $({\bf H}1)-({\bf H}4)$, the relative index is well defined for all critical points of $\mathcal{A}_H$.
\end{lemma}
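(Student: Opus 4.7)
The plan is to show that for every critical point $(z,\lambda)$ of $\mathcal{A}_H$, the maximal negative definite subspace $V^-(z,\lambda)\subset\mathcal{E}$ of the Hessian ${\rm Hess}\,\mathcal{A}_H(z,\lambda)$ is commensurable with the reference subspace $E_-\times\{0\}$ in the sense of Definition~\ref{def:4.2}. Once this is established the relative dimension (\ref{eq:4.1.2}) is automatically a well-defined finite integer.

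First I would introduce the reference self-adjoint operator
$$A_0:=\begin{pmatrix}\mathcal{D}_sL & 0\\ 0 & 1\end{pmatrix}:\mathcal{E}\to\mathcal{E}.$$
By the splitting $E_s=E_+\oplus E_-$ in (\ref{eq:3.4.0}), the operator $\mathcal{D}_sL$ has spectrum $\{\pm 1\}$ with eigenspaces $E_\pm$; hence $A_0$ is self-adjoint, invertible, and in particular Fredholm, with negative eigenspace exactly $E_-\times\{0\}$.

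Next I would verify that the difference
$$K(z,\lambda):={\rm Hess}\,\mathcal{A}_H(z,\lambda)-A_0 =\begin{pmatrix}-\lambda\mathcal{D}_sH_{zz}(x,z) & -\mathcal{D}_sH_z(x,z)\\ -(\mathcal{D}_sH_z(x,z))^* & -1\end{pmatrix}$$
is compact on $\mathcal{E}$. The two off-diagonal blocks are of rank one (since they factor through $\mathbb{R}$) and the $(2,2)$-block is a scalar on $\mathbb{R}$, so all three are trivially compact. The only substantive point is compactness of $\mathcal{D}_sH_{zz}(x,z):E_s\to E_s$. I would factor it as
$$E_s\;\xrightarrow{\;\xi\mapsto H_{zz}(x,z)\xi\;}\;L^{\frac{2n}{n+2s}}(M,\Sigma M)\times L^{\frac{2n}{n+2(1-s)}}(M,\Sigma M)\;\hookrightarrow\;E_s^*\;\xrightarrow{\;\mathcal{D}_s\;}\;E_s,$$
and then check that the first arrow is compact by combining the pointwise bounds on $H_{zz}$ furnished by $({\bf H}3)$ (extended from $|z|>\delta$ by the global continuity (\ref{eq:2.4.3})), Hölder's inequality, and the compact Sobolev embeddings $H^s\hookrightarrow L^{p+1}$ and $H^{1-s}\hookrightarrow L^{q+1}$; the choices in (\ref{eq:1.4.1}) are tailored so that the relevant Hölder exponents close up. The argument is strictly parallel to the compactness of $\mathcal{H}^\prime$ in Proposition~\ref{prop:2.1}, merely applied to the second derivative rather than the first.

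With $K(z,\lambda)$ compact, ${\rm Hess}\,\mathcal{A}_H(z,\lambda)=A_0+K(z,\lambda)$ is a self-adjoint Fredholm compact perturbation of $A_0$. Applying Lemma~\ref{lem:4.1} with $A={\rm Hess}\,\mathcal{A}_H(z,\lambda)$ and $B=A_0$ then yields that $V^-(z,\lambda)$ is commensurable with the negative eigenspace $E_-\times\{0\}$ of $A_0$, so both $\dim(V^-(z,\lambda)\cap(E_-\times\{0\})^\perp)$ and $\dim(V^-(z,\lambda)^\perp\cap(E_-\times\{0\}))$ are finite and $i_{\rm rel}(z,\lambda)$ is a well-defined integer. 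The main obstacle is the compactness of $\mathcal{D}_sH_{zz}(x,z)$, since that is the only step in which the precise subcritical condition (\ref{eq:1.4}) and the exponent $s$ from (\ref{eq:1.4.1}) enter; everything afterwards is soft functional analysis via Definition~\ref{def:4.2} and Lemma~\ref{lem:4.1}.
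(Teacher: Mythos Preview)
Your proposal is correct and follows essentially the same approach as the paper: write ${\rm Hess}\,\mathcal{A}_H(z,\lambda)$ as the reference operator $\mathcal{D}_sL\oplus Id_{\mathbb{R}}$ plus a compact remainder, and then invoke Lemma~\ref{lem:4.1}. The only difference is presentational---the paper dispatches the compactness of the remainder in one line by citing Proposition~\ref{prop:2.1}, whereas you unpack the block structure and argue compactness block by block (finite rank for the off-diagonal and $(2,2)$ entries, compact Sobolev embeddings plus $({\bf H}3)$ for $\mathcal{D}_sH_{zz}$); both routes lead to the same place.
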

\proof
Let $(z,\lambda)$ be a critical point of $\mathcal{A}_H$. Notice that $\mathcal{D}_sL\oplus Id_{\mathbb{R}}$ and ${\rm Hess}\mathcal{A}_H(z,\lambda)$ are both self-adjoint Fredholm operators on $\mathcal{E}$ respectively, where $Id_{\mathbb{R}}$ is the identity operator on $\mathbb{R}$. Moreover, $E_-\times\{0\}$ and $V^-(z,\lambda)$ are the negative eigenspaces of $\mathcal{D}_sL\oplus Id_{\mathbb{R}}$ and ${\rm Hess}\mathcal{A}_H(z,\lambda)$, respectively. It follows from Proposition~\ref{prop:2.1} that
\begin{equation}\label{eq:4.3}
{\rm Hess}\mathcal{A}_H(z,\lambda)-\mathcal{D}_sL\oplus Id_{\mathbb{R}}=\begin{pmatrix}
-\lambda\mathcal{D}_sH_{zz}(x,z)&-\mathcal{D}_sH_z(x,z) \\
-\big(\mathcal{D}_sH_z(x,z)\big)^*&-1
\end{pmatrix}
\end{equation}
is compact on $\mathcal{E}$. Hence Lemma~\ref{lem:4.1} implies that the relative index
$i_{\rm rel}(z,\lambda)$ is well defined.
\qed

\subsection{Moduli space of trajectories and grading}\label{sec:4.2}

Let $w_1=(z_1,\lambda_1)$ and $w_2=(z_2,\lambda_2)$ be two nondegenerate critical points of $\mathcal{A}_H$,
and let $a$ and $b$ be two real numbers  such that $a\leq\mathcal{A}_H(w_i)\leq b$, $i=1,2$. Define  the space of connecting orbits from $w_1$ to $w_2$ as
\begin{eqnarray} \label{eq:4.2.1}
\mathcal{M}_{H,K}^{a,b}(w_1,w_2)=\left\{w\in C^1(\mathbb{R},\mathcal{E})\Bigg|\begin{array}{l}
w(t)\;\hbox{ is a solution of (\ref{eq:2.11}) with}\\  w_1=w(-\infty)=:\lim_{t\to-\infty}w(t)\\
 \hbox{ and } w_2=w(+\infty)=:\lim_{t\to+\infty}w(t)
\end{array} \right\},
\end{eqnarray}
and  the moduli space of trajectories $\widehat{\mathcal{M}}_{H,K}^{a,b}(w_1,w_2)$ to be the quotient of $\mathcal{M}_{H,K}^{a,b}(w_1,w_2)$ with respect to the free action of $\mathbb{R}$ given by the
 flow of $-\nabla^K\mathcal{A}_H$. Denote by $\hat{w}\in \widehat{\mathcal{M}}_{H,K}^{a,b}(w_1,w_2)$ the unparametrized trajectory corresponding to ${w\in\mathcal{M}}_{H,K}^{a,b}(w_1,w_2)$. Let
 $$
 Q_1=\overline{w}+W^{1,2}(\mathbb{R},\mathcal{E})\quad\hbox{and}\quad
 Q_0=L^2(\mathbb{R},\mathcal{E}),
 $$
  where $\overline{w}:\mathbb{R}\to \mathcal{E}$ is a fixed smooth map which satisfies
  $\overline{w}(t)\equiv w_1$ for $t\leq -1$ and $\overline{w}(t)\equiv w_2$ for $t\geq 1$.
The space of parametrized trajectories $\mathcal{M}_{H,K}^{a,b}(w_1,w_2)$ can be considered as the zero
set of the map $\mathcal{F}_{H,K}:Q_1\to Q_0$ given by
\begin{eqnarray} \label{eq:4.2.2}
\mathcal{F}_{H,K}(w)=\frac{d w}{dt}+\nabla^K \mathcal{A}_H(w).
\end{eqnarray}
Suppose that $\mathcal{A}_H$ is a Morse function. To determine the dimension of the moduli space of trajectories, we need that $(H,K)$ satisfies the so-called  \emph{Morse-Smale condition}, that is to say, for every pair of critical points $w_1$, $w_2$ of $\mathcal{A}_H$ the unstable manifold $W^u(w_1;-\nabla^K \mathcal{A}_H)$ is transverse to the stable manifold $W^s(w_2;-\nabla^K\mathcal{A}_H)$, i.e.,
 $$T_{w}W^u(w_1)+T_{w}W^s(w_2)=\mathcal{E},\quad \forall w\in W^u(w_1)\cap W^s(w_2).$$
The pair $(H,K)$ with $H$ satisfying $({\rm H1})-({\rm H4})$ and $K\in\mathbf{K}$ is called \emph{regular} if $\mathcal{A}_{H}$ is Morse and $(H,K)$ satisfies the Morse-Smale condition. Denote by $\Omega_{reg}$ the set consisting of all such regular pairs.

\begin{lemma}[{\rm \cite[Proposition 12]{AnV}}]\label{lem:4.2.1}
Let $E$ be a Banach space. Let $B:\mathbb{R}\to \mathscr{L}(E,E)$ be a continuous map such that
each $B(t)$ is  compact operator and that $\lim_{t\to\pm\infty}B(t)=0$. Then the operator $M_B:W^{1,2}(\mathbb{R},E)\to L^2(\mathbb{R},E)$ given by
$$
M_Ba(t)=B(t)a(t)\quad\hbox{for}\; a\in W^{1,2}(\mathbb{R},E)
$$
is compact.
\end{lemma}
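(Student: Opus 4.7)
\medskip
\noindent\textbf{Proof plan for Lemma~\ref{lem:4.2.1}.}
The plan is to show that the image under $M_B$ of any bounded sequence $\{a_n\}\subset W^{1,2}(\mathbb{R},E)$ is relatively compact in $L^2(\mathbb{R},E)$, by splitting the real line into a large compact interval (where compactness of the operators $B(t)$ does the work) and two tails (where the decay $B(t)\to 0$ does). Fix $C>0$ with $\|a_n\|_{W^{1,2}(\mathbb{R},E)}\le C$ for all $n$.

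For the tails, given $\varepsilon>0$, use the hypothesis $\lim_{t\to\pm\infty}\|B(t)\|_{\mathscr{L}(E,E)}=0$ to pick $T=T(\varepsilon)$ so large that $\|B(t)\|_{\mathscr{L}(E,E)}<\varepsilon$ whenever $|t|>T$. Then
\[
\int_{|t|>T}\|B(t)a_n(t)\|_E^2\,dt\;\le\;\varepsilon^2\int_{|t|>T}\|a_n(t)\|_E^2\,dt\;\le\;\varepsilon^2 C^2
\]
uniformly in $n$, so the $L^2$-mass of $M_B a_n$ outside $[-T,T]$ is uniformly small.

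On $[-T,T]$ I would invoke the vector-valued Sobolev embedding $W^{1,2}([-T,T],E)\hookrightarrow C^{0,1/2}([-T,T],E)$, which yields a uniform bound $\|a_n(t)\|_E\le C_T$ and a uniform H\"older estimate $\|a_n(t)-a_n(s)\|_E\le C_T|t-s|^{1/2}$. Together with the uniform continuity of $t\mapsto B(t)$ on $[-T,T]$, this gives the equicontinuity estimate
\[
\|B(t)a_n(t)-B(s)a_n(s)\|_E\;\le\;\Bigl(\sup_{[-T,T]}\|B(\tau)\|_{\mathscr{L}(E,E)}\Bigr)C_T|t-s|^{1/2}+C_T\,\|B(t)-B(s)\|_{\mathscr{L}(E,E)},
\]
which tends to zero as $|t-s|\to 0$ uniformly in $n$. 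For pointwise precompactness, the crucial ingredient is compactness of $B(t)$: since $\{a_n(t)\}_n$ is bounded in $E$ and $B(t)$ is compact, $\{B(t)a_n(t)\}_n$ is relatively compact in $E$. The Arzel\`a--Ascoli theorem for $E$-valued continuous functions then produces a subsequence converging in $C^0([-T,T],E)$, hence in $L^2([-T,T],E)$.

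Finally, a standard diagonal procedure combines the compact-interval extractions for an increasing sequence $T_k\to\infty$ together with the uniform tail bound to furnish a subsequence of $\{M_B a_n\}$ which is Cauchy in $L^2(\mathbb{R},E)$. The step I expect to be the main technical point is the pointwise precompactness used in Arzel\`a--Ascoli: it is not implied by the $W^{1,2}$ bound alone (the embedding into $C^{0,1/2}$ only delivers pointwise boundedness in $E$, not compactness), and one really must exploit the hypothesis that each $B(t)$ is a compact operator. Everything else is an $\varepsilon$-control bookkeeping built on the decay $\|B(t)\|\to 0$ and the Sobolev embedding.
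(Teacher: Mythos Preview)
Your proposal is correct. Note, however, that the paper does not give its own proof of this lemma: it is quoted from \cite[Proposition~12]{AnV} and used as a black box, so there is no in-paper argument to compare against. Your approach---tail control from the decay $\|B(t)\|\to 0$, Sobolev embedding $W^{1,2}\hookrightarrow C^{0,1/2}$ on compact intervals, and the vector-valued Arzel\`a--Ascoli with pointwise precompactness supplied by compactness of each $B(t)$---is the standard route and matches the original argument in \cite{AnV}. Your identification of the pointwise-precompactness step as the place where compactness of $B(t)$ is genuinely needed is exactly right.

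One minor streamlining: the diagonal extraction over $T_k\to\infty$ is unnecessary. Once you have shown that the truncated operator $M_B^{T}a:=\chi_{[-T,T]}M_Ba$ is compact (by your Arzel\`a--Ascoli argument) and that $\|M_B-M_B^{T}\|_{\mathscr{L}(W^{1,2},L^2)}\le \sup_{|t|>T}\|B(t)\|\to 0$, compactness of $M_B$ follows immediately from the fact that a norm-limit of compact operators is compact. This packages the same ideas without the diagonal bookkeeping.
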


\begin{lemma}[{\rm \cite[Theorem 3.4]{AbM}}]\label{lem:4.2.2}
Let $E$ be a Banach space. Let $A:\mathbb{R}\to L(E,E)$ be continuous map such that
$$
A(-\infty):=\lim\limits_{t\to-\infty}A(t)\quad\hbox{and}\quad A(+\infty):=\lim\limits_{t\to+\infty}A(t)
$$
exist and are self-adjoint and invertible. Then the operator
$$
L_A:W^{1,2}(\mathbb{R},E)\to L^2(\mathbb{R},E)
$$
given by $L_Aw(t)=w'(t)+A(t)w(t)$,
is a Fredholm operator of index
$$
{\rm ind}L_A={\rm dim}\bigl(V^-(A(-\infty)),V^-(A(+\infty))\bigr),
$$
where $V^-(L)$ represents the negative eigenspace of $L$ on $E$.
\end{lemma}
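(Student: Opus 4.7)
The plan is to use the standard strategy for Fredholm theory of asymptotic operators: first reduce to the asymptotically constant setting by a compact perturbation, then compute the index by relating it to the relative dimension of the spectral subspaces of the limiting operators.

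First, I would verify Fredholmness. Choose a continuous $\tilde{A}:\mathbb{R}\to\mathscr{L}(E,E)$ with $\tilde{A}(t)\equiv A(-\infty)$ for $t\le -T$, $\tilde{A}(t)\equiv A(+\infty)$ for $t\ge T$, and a smooth interpolation between. Then $B(t):=A(t)-\tilde{A}(t)$ tends to $0$ at $\pm\infty$, and (subject to the relative-compactness hypotheses that make the relative-dimension definition sensible in the surrounding framework) Lemma~\ref{lem:4.2.1} yields that $w\mapsto B(\cdot)w(\cdot)$ is compact from $W^{1,2}(\mathbb{R},E)$ to $L^2(\mathbb{R},E)$. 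Hence $L_A-L_{\tilde{A}}$ is compact, so both operators are simultaneously Fredholm with the same index, and it suffices to study $L_{\tilde{A}}$.

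Next, I would analyze the model operator $L_{A_\infty}w=w'+A_\infty w$ with $A_\infty$ self-adjoint and invertible. Using the spectral projections $P_\pm$ onto $V^\pm(A_\infty)$, on which $A_\infty$ has strictly positive respectively strictly negative spectrum, the semigroups $e^{-tA_\infty}P_+$ and $e^{tA_\infty}P_-$ decay exponentially in the appropriate time direction. The Duhamel formula
\[
w(t)=\int_{-\infty}^{t}e^{-(t-s)A_\infty}P_+f(s)\,ds-\int_{t}^{+\infty}e^{-(t-s)A_\infty}P_-f(s)\,ds
\]
supplies a bounded two-sided inverse, so $L_{A_\infty}:W^{1,2}\to L^2$ is invertible, i.e.\ Fredholm of index zero. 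For the asymptotically constant $\tilde{A}$, a cut-and-paste parametrix built from the two model inverses shows that $\ker L_{\tilde{A}}$ is isomorphic to the space of solutions of $w'+\tilde{A}(t)w=0$ whose asymptotic traces lie in $V^+(A(-\infty))$ at $-\infty$ (the decaying subspace there) and simultaneously in $V^-(A(+\infty))$ at $+\infty$; the analogous analysis for the formal adjoint $-d/dt+\tilde{A}(t)^*$ describes $\mathrm{coker}\,L_{\tilde{A}}$ dually. A careful bookkeeping then yields
\[
\mathrm{ind}\,L_{\tilde{A}}=\dim\bigl(V^-(A(+\infty))\cap V^+(A(-\infty))^\perp\bigr)-\dim\bigl(V^+(A(+\infty))\cap V^-(A(-\infty))^\perp\bigr),
\]
which equals $\dim\bigl(V^-(A(-\infty)),V^-(A(+\infty))\bigr)$ by Definition~\ref{def:4.2}.

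The main obstacle I anticipate lies in the last step: rigorously identifying $\ker L_{\tilde{A}}$ and $\mathrm{coker}\,L_{\tilde{A}}$ with finite-dimensional intersections of spectral subspaces requires commensurability of $V^\pm(A(-\infty))$ and $V^\pm(A(+\infty))$, which in turn needs the difference $A(+\infty)-A(-\infty)$ to behave like a compact perturbation so that Lemma~\ref{lem:4.1} applies. Bridging the gap from the abstract continuity hypothesis on $A(\cdot)$ to this concrete spectral picture---and in particular controlling the exponential dichotomy of the flow of $w'+\tilde{A}(t)w=0$ uniformly in $t$---is the technical heart of the argument.
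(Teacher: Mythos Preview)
The paper does not prove this lemma; it is quoted from \cite[Theorem~3.4]{AbM} and used as a black box. Your outline follows the standard strategy for such results (reduction to asymptotically constant coefficients, Duhamel-type parametrix for the model operator, identification of kernel and cokernel with intersections of spectral subspaces), and this is indeed how the result is proved in the source.

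There is, however, a genuine gap in your first reduction. You invoke Lemma~\ref{lem:4.2.1} to conclude that $L_A-L_{\tilde A}$ is compact, but that lemma requires each $B(t)=A(t)-\tilde A(t)$ to be a \emph{compact} operator on $E$, not merely bounded with $\lim_{t\to\pm\infty}B(t)=0$. Under the hypotheses of Lemma~\ref{lem:4.2.2} as stated here---only continuity of $A$ with self-adjoint invertible limits---nothing forces $B(t)$ to be compact, and in a general Hilbert space the multiplication operator by a family vanishing at infinity need not be compact from $W^{1,2}$ to $L^2$. The statement in the paper is in fact an abbreviation: the original theorem in \cite{AbM} assumes that every $A(t)$ is a compact perturbation of a fixed hyperbolic operator. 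That hypothesis is precisely what makes each $B(t)$ compact (so your reduction goes through) and simultaneously guarantees, via Lemma~\ref{lem:4.1}, the commensurability of $V^-(A(-\infty))$ and $V^-(A(+\infty))$ that you correctly flag as an obstacle in your final paragraph. So the missing ingredient is not a technical wrinkle at the end but a structural assumption needed from the very first step; once it is in place, your argument is the right one.
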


\begin{proposition}\label{prop:4.2.1}
 Let $(H,K)$ be a regular pair. Suppose that the relative indexes of two
 critical points $w_1$ and $w_2$ of $\mathcal{A}_H$ satisfy  $i_{\rm rel}(w_1)-i_{\rm rel}(w_2)>0$.
  Then $\widehat{\mathcal{M}}_{H,K}^{a,b}(w_1, w_2)$ is a manifold of dimension $i_{\rm rel}(w_1)-i_{\rm rel}(w_2)-1$ if ${\mathcal{M}}_{H,K}^{a,b}(w_1, w_2)\ne\emptyset$.
\end{proposition}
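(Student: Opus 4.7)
The plan is to realize $\mathcal{M}_{H,K}^{a,b}(w_1,w_2)$ as the zero set of the map $\mathcal{F}_{H,K}:Q_1\to Q_0$ defined in (\ref{eq:4.2.2}) and then apply the implicit function theorem, once Fredholmness and surjectivity of its linearization are established. First I would verify that $\mathcal{F}_{H,K}$ is $C^1$ between these affine $W^{1,2}$- and $L^2$-spaces. This uses the $C^1$-regularity of $\nabla\mathcal{A}_H$ provided by Proposition~\ref{prop:2.1}, the smoothness of $K\in\mathbf{K}$ together with the Gevrey-type bound (\ref{met:2}) which controls the Nemytskii action on $Q_1$, and the fact that any $w\in\mathcal{M}_{H,K}^{a,b}(w_1,w_2)$ converges to $w_i$ exponentially as $t\to\pm\infty$, so that $w-\overline{w}\in W^{1,2}(\mathbb{R},\mathcal{E})$. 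The last point is standard: the uniform bound in $\mathcal{E}$ from Proposition~\ref{prop:3.2} combined with the nondegeneracy of $w_1,w_2$ yields a spectral gap for the linearized flow near each end.

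Next I would linearize at $w\in\mathcal{F}_{H,K}^{-1}(0)$ to obtain $d\mathcal{F}_{H,K}(w)\eta=\eta'(t)+B(t)\eta(t)$ with
$$B(t)=(I+K(w(t)))\,{\rm Hess}\,\mathcal{A}_H(w(t))+\bigl(DK(w(t))[\,\cdot\,]\bigr)\nabla\mathcal{A}_H(w(t)).$$
Because $\nabla\mathcal{A}_H(w_i)=0$, the asymptotic operators reduce to $B(\pm\infty)=(I+K(w_i))\,{\rm Hess}\,\mathcal{A}_H(w_i)$, which are invertible (by the Morse hypothesis together with (\ref{met:3}) making $I+K(w_i)$ positive) and self-adjoint with respect to the inner product $g^{K}_{w_i}$. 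Lemma~\ref{lem:4.2.2} then applies and gives that $d\mathcal{F}_{H,K}(w)$ is Fredholm of index ${\rm dim}\bigl(V^-(B(-\infty)),V^-(B(+\infty))\bigr)$, computed with respect to the spectral decomposition in $g^{K}_{w_i}$. Since $I+K(w_i)$ is a positive compact perturbation of the identity on $\mathcal{E}$, the $g^{K}_{w_i}$-negative eigenspace of $B(w_i)$ is commensurable with $V^-(w_i):=V^-({\rm Hess}\,\mathcal{A}_H(w_i))$ and their relative dimension is zero by Lemma~\ref{lem:4.1}. Combining this with the additivity formula (\ref{eq:4.1.1}) yields
$${\rm ind}\,d\mathcal{F}_{H,K}(w)={\rm dim}\bigl(V^-(w_1),V^-(w_2)\bigr)=i_{\rm rel}(w_1)-i_{\rm rel}(w_2).$$

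Third, the Morse--Smale condition imposed on the regular pair $(H,K)$ is, by the standard translation in Floer-type theories, exactly the statement that $d\mathcal{F}_{H,K}(w)$ is surjective at every $w\in\mathcal{F}_{H,K}^{-1}(0)$ (the kernel is naturally identified with $T_wW^u(w_1)\cap T_wW^s(w_2)$, while the cokernel measures the failure of transversality). The implicit function theorem therefore gives $\mathcal{M}_{H,K}^{a,b}(w_1,w_2)$ the structure of a $C^1$ Banach manifold whose dimension equals the Fredholm index computed above. Since $i_{\rm rel}(w_1)>i_{\rm rel}(w_2)$ forces $w_1\ne w_2$, time translation acts freely, and the quotient $\widehat{\mathcal{M}}_{H,K}^{a,b}(w_1,w_2)$ is a smooth manifold of dimension $i_{\rm rel}(w_1)-i_{\rm rel}(w_2)-1$.

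The main obstacle will lie in the second step: carefully reconciling the negative eigenspace of the asymptotic operator $(I+K(w_i))\,{\rm Hess}\,\mathcal{A}_H(w_i)$---which is self-adjoint only in the perturbed inner product $g^{K}_{w_i}$---with the subspace $V^-(w_i)$ defined in (\ref{eq:4.1.2}) using the unperturbed inner product of $\mathcal{E}$. The argument rests on the nuclearity of $K$ ensured by the definition of $\mathbf{K}$ in (\ref{met:4.1}), which guarantees that the discrepancy between the two spectral projections is compact so that Lemma~\ref{lem:4.1} can be used. Once this comparison is in place, everything else is routine Floer-type analysis.
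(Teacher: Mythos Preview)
Your overall strategy is correct and matches the paper's in outline, but you and the paper diverge at the crucial index computation, and your route has a gap. The paper does \emph{not} work with the asymptotic operators $(I+K(w_i))\,{\rm Hess}\,\mathcal{A}_H(w_i)$ directly. Instead it introduces the one-parameter family $K_\theta(w)=I+\theta K(w)$, $\theta\in[0,1]$, observes that each $D\mathcal{F}_{H,\theta K}(w)$ is Fredholm (the $dK_\theta$ term is handled by Lemma~\ref{lem:4.2.1}, and the Hessian is a compact perturbation of $\mathcal{D}_sL\oplus{\rm Id}_{\mathbb{R}}$), and then invokes homotopy invariance of the Fredholm index to reduce to $\theta=0$, where the asymptotic operators are genuinely self-adjoint in the standard inner product and Lemma~\ref{lem:4.2.2} applies verbatim together with (\ref{eq:4.1.1}).

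In your argument the sentence ``their relative dimension is zero by Lemma~\ref{lem:4.1}'' is the problem. Lemma~\ref{lem:4.1} asserts only commensurability of negative eigenspaces for a \emph{compact} perturbation of a \emph{self-adjoint} operator; it says nothing about the relative dimension being zero, and the operator $(I+K(w_i))\,{\rm Hess}\,\mathcal{A}_H(w_i)$ is not self-adjoint in the standard inner product of $\mathcal{E}$, so the lemma does not even apply as stated. To repair this you would need to deform $(I+K(w_i))$ to $I$ along $(I+\theta K(w_i))$ and argue that the negative spectral projection varies continuously while invertibility persists---but that is exactly the homotopy argument the paper uses, only executed pointwise at the ends rather than globally on the whole trajectory. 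The paper's version is cleaner because it avoids ever having to discuss eigenspaces with respect to the perturbed metric $g^K$.
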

\proof
Let $K_\theta(w)=I+\theta K(w)$ for any $w\in \mathcal{E}$, where $\theta\in[0,1]$.
Then the linearized operator of $\mathcal{F}_{H,\theta K}$ at $w=(z,\lambda)$ is given by
\begin{equation}\label{eq:4.2.3}
D\mathcal{F}_{H,\theta K}(w)=\frac{d}{dt}+ K_\theta(w(t)){\rm Hess}\mathcal{A}_H(w(t))
+\{dK_\theta(w(t))[\cdot]\}\nabla\mathcal{A}_H(w(t)).
\end{equation}
We shall use the implicit function theorem to prove our result. To this end we need to show that $D\mathcal{F}_{H,K}(w)$ is Fredholm and onto. Notice that ${\rm Hess}\mathcal{A}_H(w)$ has the following decomposition
\begin{equation}\notag
\begin{array}{l}
{\rm Hess}\mathcal{A}_H(z,\lambda)=
\begin{pmatrix}
\mathcal{D}_sL&0 \\
0&1
\end{pmatrix}
+
\begin{pmatrix}
-\lambda\mathcal{D}_sH_{zz}(x,z)&-\mathcal{D}_sH_z(x,z) \\
-\big(\mathcal{D}_sH_z(x,z)\big)^*&-1
\end{pmatrix}.
\end{array}
\end{equation}
It follows from Proposition~\ref{prop:2.1} that ${\rm Hess}\mathcal{A}_H(w)$ is a compact perturbation of $\scriptscriptstyle\begin{pmatrix}\begin{smallmatrix}\mathcal{D}_sL&0 \\0&1\end{smallmatrix}\end{pmatrix}$ with spectrum $\{-1,1\}$. For a fixed $t\in\mathbb{R}$, by definition the linear maps
$$
T(t):\mathcal{E}\to C^2(M,\Sigma M\oplus\Sigma M)\times \mathbb{R},\quad
\xi\mapsto\{dK_\theta(w(t))[\xi]\}\nabla\mathcal{A}_H(w(t))
$$
is bounded, and hence compact as an operator from $\mathcal{E}$ to $\mathcal{E}$. Since $\lim_{t\to\pm\infty}\nabla\mathcal{A}_H(w(t))=0$, Lemma~\ref{lem:4.2.1} implies that the third term in (\ref{eq:4.2.3}) is compact. Therefore $D\mathcal{F}_{H,\theta K}(w)$ is a family of Fredholm  operator from $W^{1,2}(\mathbb{R},\mathcal{E})$ to $L^2(\mathbb{R},\mathcal{E})$. By the homotopy invariance of Fredholm index (cf. \cite{Kat}) and Lemma~\ref{lem:4.2.2}, we obtain
\begin{eqnarray*}
{\rm ind}(D\mathcal{F}_{H,K}(w))&=&{\rm ind}(D\mathcal{F}_{H,0}(w))\notag\\
&=&{\rm dim}(V^-(w_1),E_-\times\{0\})+{\rm dim}(E_-\times\{0\},V^-(w_2))\notag\\
&=&{\rm dim}(V^-(w_1),E_-\times\{0\})-{\rm dim}(V^-(w_2), E_-\times\{0\})\notag\\
&=&i_{\rm rel}(w_1)-i_{\rm rel}(w_2).\notag
\end{eqnarray*}

On the other hand, the Morse-Smale condition guarantees that $D\mathcal{F}_{H,K}(w)$ is onto (see \cite{Abb} for a detailed proof). So $\mathcal{M}_{H,K}^{a,b}(w_1,w_2)$ is a manifold of dimension $i_{\rm rel}(w_1)-i_{\rm rel}(w_2)$ and thus  the desired result is obtained by modeling the free $\mathbb{R}$-action. \qed

\subsection{Broken trajectories and gluing}\label{sec:4.3}
By Proposition~\ref{prop:3.4}, for any $w=(z,\lambda)\in\mathcal{M}_{H,K}(w_1,w_2)$ with $\sup_t|\mathcal{A}_H(w(t))|<\infty$, $\lambda(t)$ is bounded in $\mathbb{R}$ and $z(t)$ belongs to a compact subset of
$$
X_{\alpha,\beta}:=C^\alpha(M,\Sigma M)\oplus C^\beta(M,\Sigma M)
$$
for some two constants $0<\alpha<\min\{1,2s\}, 0<\beta<\min\{1,2(1-s)\}$.
 As a result, the moduli spaces are modeled on the affine space
 $$
 \mathcal{Q}^1:=C^1(\mathbb{R},X),\quad\hbox{where}\; X=X_{\alpha,\beta}\times\mathbb{R}.
 $$
 In the following, we will equip the $C^0_{loc}(\mathbb{R},X)$- topology on the moduli space $\widehat{\mathcal{M}}_{H,K}(w_1,w_2)$, i.e., the uniform $X$-norm convergence on bounded intervals of $\mathbb{R}$.

\begin{definition}
{\rm A broken trajectory from $x\in{\rm Crit}(\mathcal{A}_H)$ to $y\in{\rm Crit}(\mathcal{A}_H)$ consists of critical points $x=w_0,w_1,\ldots,w_k=y$, $k\geq2$ and a tuple $(\hat{u}_1,\ldots,\hat{u}_k)$ of unparametrized flow lines of $-\nabla^K\mathcal{A}_H$ such that $u_{i-1}(-\infty)=w_{i-1}$ and $u_{i-1}(+\infty)=w_i$, $1\leq i\leq k$.}
\end{definition}

The evaluation map $EV:\mathcal{M}_{H,K}(w_1,w_2)\to X$ defined by
$$EV(w)=w(0)$$
is a continuous injective map. It has also a precompact image set. In fact, let $(w^i)_{i=1}^\infty$ be a sequence in $\mathcal{M}_{H,K}(w_1,w_2)$. Write  $EV (w^i)=w^i_0$ and denote the corresponding orbits by
$c^i:=w^i(\mathbb{R})$.
As a result of the compactness before, the union set $\bigcup_i c^i\bigcup\{w_1,w_2\}$ constitutes a compact set in $X$.  Under the assumption that the critical points of $\mathcal{A}_H$ are isolated,
a standard argument (\cite[Section 5]{AnV}) shows that
 the sequence $\bar{c}^i=c^i\bigcup\{w_1,w_2\}$ has a subsequence converging to some compact set $c^*$ in $X$ which is either a broken trajectory or the union of an unparametrized flow line and the set $\{w_1,w_2\}$.

The above argument tells us that moduli spaces of trajectories are generally not compact. In the following we will use a gluing construction similar to that in Morse homology \cite[Chapter 2]{ScM} to show that the closure of $EV\bigl(\mathcal{M}_{H,K}(w_1,w_2)\bigr)$ is compact in $X$.

Suppose that $w_{12}\in\mathcal{M}_{H,K}(w_1,w_2)$ and $w_{23}\in\mathcal{M}_{H,K}(w_2,w_3)$ with $i_{rel}(w_1)>i_{rel}(w_2)>i_{rel}(w_3)$.
Since both $D\mathcal{F}_{H,K}(w_{12})$ and $D\mathcal{F}_{H,K}(w_{23})$ are Fredholm and onto provided that $(H,K)$ satisfies the Morse-Smale condition, they have bounded right inverses $\Psi_{12}$ and $\Psi_{23}$ from $L^2(\mathbb{R},\mathcal{E})$ to $W^{1,2}(\mathbb{R},\mathcal{E})$, respectively. Let $\zeta$ be a nonnegative smooth function such that
$$
\zeta(t)\equiv0\quad\hbox{for}\; t\leq-1,\quad\zeta(t)\equiv1\quad\hbox{for}\; t\geq 1,\quad\hbox{and}\; \zeta^\prime(t)\geq0\quad\forall t.
$$
 Let $\zeta_T(t)=\zeta(\frac{t}{T})$ for large $T>0$.
 Now we glue $w_{12}$ and $w_{23}$ as follow:
  $$
  w_{13,T}(t)=(1-\zeta_T(t))w_{12}(t+2T)+\zeta_T(t)w_{23}(t-2T),
 \hspace{2mm}t\in \mathbb{R}.
 $$
Accordingly, we define the gluing operator
$$\Phi_T=\rho_T^+\tau_{2T}\Psi_{12}\tau_{-2T}\rho_T^+
+\rho_T^-\tau_{2T}\Psi_{23}\tau_{-2T}\rho_T^-,$$
where $\tau_s$ is a translation operator satisfying $\tau_hg(t)=g(t+h)$ and $\rho^\pm$ is a pair of smooth functions satisfying
\begin{eqnarray*}
&&\rho_T^\pm=\rho^\pm\bigg(\frac{t}{T}\bigg),\quad
\hspace{2mm}(\rho_1^+)^2+(\rho_1^-)^2=1,\\
&&\rho_1^+(t)=0\;\text{ for }    t\leq-1,\hspace{2mm}\rho_1^-(t)=\rho_1^+(-t).
\end{eqnarray*}
A direct computation yields that $D\mathcal{F}_{H,K}(w_{13,T})\circ\Phi_T$ converges to the identity operator on $L^2(\mathbb{R},\mathcal{E})$ as $T\to\infty$.
Then it follows from the implicit function theorem that the equation $$\mathcal{F}_{H,K}(w)=0$$
has solutions of the form $w=w_{13,T}+\Phi_T\eta$ with $\eta\in L^2(\mathbb{R},\mathcal{E})$ whenever $T$ is large enough. Moreover, such a solution $w$ with small $\eta\in L^2(\mathbb{R},\mathcal{E})$
is unique, denoted by $w_{12}\#_{\scriptscriptstyle{T}}w_{23}$.
Hence one can approximate the broken trajectory $(\hat{w}_{12},\hat{w}_{23})$ by such $C^1$-glued orbits. Since these glued orbits represent approximate solutions of
 the negative gradient flow in a unique way for large gluing parameter $T$,
    from the fact that $EV\bigl(\mathcal{M}_{H,K}(w_1,w_2)\bigr)$ has compact
   closure we deduce that the moduli space $\widehat{\mathcal{M}}_{H,K}(w_1,w_2)$ is
    finite whenever $i_{\rm rel}(w_1)=i_{\rm rel}(w_2)+1$.

\section{The Rabinowitz Floer complex of $\mathcal{A}_H$}\label{sec:5}
\setcounter{equation}{0}

In former two subsections we shall give the construction of the
Rabinowitz Floer homology of $\mathcal{A}_H$ in Morse situation and Morse-Bott one.
Then we prove continuation invariance of the homology in the Morse case in Section~5.3.

\subsection{The Morse situation}\label{sec:5.0}

Given a pair $(H,K)\in\Omega_{reg}$, $k\in\mathbb{N}$ and real numbers $a<b$, let ${\rm Crit}(\mathcal{A}_H)$ be the set of critical points of the functional $\mathcal{A}_H$, ${\rm Crit}_k(\mathcal{A}_H)=\{x\in {\rm Crit}(\mathcal{A}_H)\,|\,i_{\rm rel}(x)=k\}$ and
\begin{eqnarray*}
&&{\rm Crit}^{[a,b]}(\mathcal{A}_H):=\{x\in {\rm Crit}(\mathcal{A}_H)\;|\;a\le\mathcal{A}_H(x)\le b\},\\
&&{\rm Crit}_k^{[a,b]}(\mathcal{A}_H):=\{x\in {\rm Crit}_k(\mathcal{A}_H)\;|\;a\le\mathcal{A}_H(x)\le b\}.
\end{eqnarray*}
Denote by ${\rm CF}_k(H)$ the chain complex as the vector space over $\mathbb{Z}_2$ generated by
${\rm Crit}_k(\mathcal{A}_H)$. It  needs not to be finitely
generated.  But its $\mathbb{Z}_2$-subspace ${\rm CF}_k^{[a,b]}(H)$
generated by ${\rm Crit}_k^{[a,b]}(\mathcal{A}_H)$ has only finite elements
 since  $\mathcal{A}_H$ is Morse and satisfies the $(PS)_c$ condition. As we said before, if $x,y\in {\rm Crit}(\mathcal{A}_H)$
satisfy  $i_{\rm rel}(x)=i_{\rm rel}(y)+1$, then the integer $\#\widehat{\mathcal{M}}_{H,K}(x,y)$
is finite. Define
$$
n_2(x,y):=\#\widehat{\mathcal{M}}_{H,K}(x,y)\text{ mod }[2],
$$
and the boundary operator $\partial_k:{\rm CF}_k^{[a,b]}(H)\to{\rm CF}_{k-1}^{[a,b]}(H)$ by
\begin{equation}\label{e:BH}
\partial_k \left(\sum^r_{j=1}m_jx_j\right)=\sum^r_{j=1}\sum\limits_{y\in{\rm CF}_{k-1}^{[a,b]}(H)}
m_jn_2(x_j,y)y
\end{equation}
for $\sum^r_{j=1}m_jx_j\in {\rm CF}_k^{[a,b]}(H)$.
Due to the gluing argument of Section~\ref{sec:4.3}, we have $\partial_k\partial_{k-1}=0$. So $({\rm CF_*}^{[a,b]}(H),\partial_*)$ is a chain complex which is called the \emph{Rabinowitz Floer complex} of $\mathcal{A}_H$. The corresponding homology is called the \emph{Rabinowitz Floer homology} of $\mathcal{A}_H$ defined by
$$RHF_k^{[a,b]}(H,K)=\frac{{\rm ker}(\partial_k)}{{\rm im}(\partial_{k+1})}.$$

\subsection{The Morse-Bott situation}\label{sec:5.1}
In this subsection, we construct the Rabinowitz Floer homology when the functional $\mathcal{A}_H$ is Morse-Bott.
\begin{definition}
{\rm Let $B$ be a Hilbert space. A $C^k$ functional $f$  ($k\geq 2$) on a Hilbert space $B$ is called {\it Morse-Bott} if its critical set $${\rm Crit}(f)=\{x\in B|df(x)=0\}$$ is a $C^{k-1}$ submanifold (possible with components of different dimensions) and  it holds that
$$
T_x{\rm Crit}(f)={\rm ker}({\rm Hess}(f)(x))\quad\forall x\in{\rm Crit}(f).
$$}
\end{definition}

 Suppose that $H$ is invariant under the action of $S^1$ on $\Sigma
M\times \Sigma M$, that is,
$$
H(x,e^{i\theta}z)=H(x,z), \hspace{2mm} \forall \theta\in\mathbb{R},
\hspace{2mm} \forall x\in M, \hspace{2mm} \forall z=(u,v)\in\Sigma_x
M\times \Sigma_x M.
$$
By extending the $S^1$ action on $\mathbb{R}$ trivially, we find that $\mathcal{A}_H$ is also invariant under this action, i.e.,
$$
\mathcal{A}_H(e^{i\theta}z,e^{i\theta}\lambda)=
\mathcal{A}_H(z,\lambda),\hspace{2mm} \forall z\in E_s, \hspace{2mm} \forall \lambda\in\mathbb{R}.
$$
In this case, the functional $\mathcal{A}_H$ is never Morse. One way to overcome it is to choose an additional small perturbation to reduce to the Morse situation as before. However,
even for the nonlinearity $H(x,u,v)=\frac{1}{2}(u^2+v^2)$,  giving
 an explicit computation of  the Rabinowitz Floer homology of it  needs an elaborate perturbation and a good computation for the index. It seems to be difficult for the authors to carry out them.

Instead we may assume that $\mathcal{A}_H$ is Morse-Bott, and then choose a Morse function satisfying the Morse-Smale condition on the critical manifold. The chain complex is generated by the critical points of this Morse function, while the boundary operator is defined by counting flow lines with cascades. For the definition of gradient flow lines with cascades we refer to Frauenfelder \cite{Fra} or \cite{BoO,CiF}.

\noindent{\bf 5.2.1. Flow lines with cascades.}\quad Assume that
 ${\rm Crit}(\mathcal{A}_H)$ is a $C^2$-submanifold of $\mathcal{E}$. We choose a Riemannian metric $g$ on ${\rm Crit}(\mathcal{A}_H)$ and a Morse function $h:{\rm Crit}(\mathcal{A}_H)\to\mathbb{R}$, such that $(h,g)$ satisfy the Morse-Smale condition, that is,
 for every pair of critical points $x$, $y$ of $h$, the unstable manifold $W^u(x;-\nabla h)$ is transverse to the stable manifold $W^s(x;-\nabla h)$.
 Let ${\rm ind}(x)$ denote the Morse index of $h$ at $x$, i.e., ${\rm ind}(x)={\rm dim} W^u(x;-\nabla h)$.
 We now assign an index $\nu$ to $x$ by
$$
\nu(x):=i_{\rm rel}(x)+{\rm ind}(x).
$$

Given two real numbers $a<b$,   we further assume
  \begin{center}
  \textsf{the critical manifold of $\mathcal{A}_H$ containing in $\mathcal{A}_H^{-1}[a, b]$
    is the union of a\\ finite number of disjoint, compact, non-degenerate critical manifolds.}
  \end{center}
  Let $\Psi_h(t)\in{\rm Diff}({\rm Crit}^{[a,b]}(\mathcal{A}_H))$ be the smooth family of diffeomorphisms defined by
$$\Psi_h(t)(p)=\varphi_p(t),\quad\forall~p\in{\rm Crit}^{[a,b]}(\mathcal{A}_H),$$
where $\varphi_p$ is the flow line of $-\nabla h$ with $\varphi_p(0)=p$.
 Let  $\sigma_0$ and $\sigma_1$ be two components of ${\rm Crit}^{[a,b]}(\mathcal{A}_H)$,
 and $x_i\in\sigma_i$, $i=0,1$, and an integer $m\geq1$. A \emph{flow line from $x_0$ to $x_1$ with $m$ cascades} consists of $m-1$ components $c_1,\ldots,c_{m-1}$ of ${\rm Crit}(\mathcal{A}_H)$ and a $m$ tuple $(w_1,\ldots,w_m)$ of solutions of the Rabinowitz-Floer equation (\ref{eq:2.11}) such that
\begin{eqnarray*}
&&w_1(-\infty)\in W^u(x_0;-\nabla h),\hspace{4mm}
w_m(+\infty)\in W^s(x_1;-\nabla h),\\
&&w_{j+1}(-\infty)=\Phi_h(t_j)(w_j(+\infty))\quad\hbox{for some}\;t_j\in \mathbb{R}_+,\;j=
1,\cdots,m-1,
\end{eqnarray*}
where $\mathbb{R}_+:=\{r\in\mathbb{R}:r\geq0\}$. Denote by
$$
\mathcal{M}^{a,b}_m(x_0,x_1;H,K,h,g)
$$
the set of flow fines from $x_0$ to $x_1$ with $m$ cascades, and by
 $\widehat{\mathcal{M}}^{a,b}_m(x_0,x_1;H,K,h,g)$ the quotient of $\mathcal{M}^{a,b}_m(x_0,x_1;H,K,h,g)$ by the free action of $\mathbb{R}^m$ given by $s$-translations on each $w_j$.

We also define the set of \emph{flow lines from $x_0$ to $x_1$ with zero cascades}, denoted by
$$
\mathcal{M}_0^{a,b}(x_0,x_1;H,K,h,g),
$$
is the intersection  $W^u(x_0;-\nabla h)\cap W^s(x_1;-\nabla h)$ whenever $\sigma_0=\sigma_1$,
and empty if $\sigma_0$ and $\sigma_1$ are different.
 When $\sigma_0=\sigma_1,x_0\neq x_1$ the quotient of $\mathcal{M}_0^{a,b}(x_0,x_1;H,K,h,g)$ by the $\mathbb{R}$- action is denoted by $\widehat{\mathcal{M}}_0^{a,b}(x_0,x_1;H,K,h,g)$, otherwise $\widehat{\mathcal{M}}_0^{a,b}(x_0,x_1;H,K,h,g)$ is defined to be empty.

We call
$$\widehat{\mathcal{M}}^{a,b}(x_0,x_1;H,K,h,g)=\bigcup\limits_{m\geq0}
\widehat{\mathcal{M}}_m^{a,b}(x_0,x_1;H,K,h,g)$$
the moduli space of \emph{Morse-Bott trajectories with cascades}.
For each $m\geq0$, the moduli space of trajectories $\widehat{\mathcal{M}}_m^{a,b}(x_0,x_1;H,K,h,g)$ with $m$ cascades can be compactified by adding broken trajectories in the same way as Theorem A.10 in \cite{Fra}. The rest of this subsection is mainly devoted to prove

\begin{theorem}\label{th:5.1}
 Given two components of ${\rm Crit}^{[a,b]}(\mathcal{A}_H)$,
 $\sigma_0$, $\sigma_1$,  and $x_i\in\sigma_i$, $i=0,1$, for generic choice of $K$, the set $\widehat{\mathcal{M}}^{a,b}(x_0,x_1;H,K,h,g)$ has the structure of a finite dimensional manifold and its dimension is given by
$${\rm dim}\widehat{\mathcal{M}}^{a,b}(x_0,x_1;H,K,h,g)
=\nu(x_0)-\nu(x_1)-1.$$
\end{theorem}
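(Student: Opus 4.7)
The plan is to treat each stratum $\widehat{\mathcal{M}}^{a,b}_m(x_0,x_1;H,K,h,g)$ for $m\ge 0$ separately, prove it is a manifold of dimension $\nu(x_0)-\nu(x_1)-1$ for generic $K$, and conclude by observing that the strata are pairwise disjoint (with the compactification/gluing analysis of Section~\ref{sec:4.3} describing how they assemble). Since the asserted dimension comes out to $\nu(x_0)-\nu(x_1)-1$ independently of $m$, the union is then a manifold of that dimension.

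For the $m=0$ stratum the only nontrivial case is $\sigma_0=\sigma_1$. The moduli space reduces to $W^u(x_0;-\nabla h)\cap W^s(x_1;-\nabla h)$ inside the finite-dimensional critical manifold $\sigma_0$, quotiented by the free $\mathbb{R}$-action of $-\nabla h$. The Morse--Smale hypothesis on $(h,g)$ makes this intersection transverse of dimension ${\rm ind}(x_0)-{\rm ind}(x_1)$; since $i_{\rm rel}(\sigma_0)=i_{\rm rel}(\sigma_1)$ in this case the quotient has dimension ${\rm ind}(x_0)-{\rm ind}(x_1)-1=\nu(x_0)-\nu(x_1)-1$.

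For $m\ge 1$ I would realize the stratum as the zero set of a smooth section of a Banach bundle over
\begin{equation*}
(\mathbb{R}_+)^{m-1}\times\prod_{j=1}^m Q_1^{(j)},
\end{equation*}
where $Q_1^{(j)}$ is an affine space of $W^{1,2}_{\rm loc}$ maps $w_j\colon\mathbb{R}\to\mathcal{E}$ converging exponentially to components $c_{j-1},c_j\subset{\rm Crit}(\mathcal{A}_H)$ at $\mp\infty$ (with $c_0=\sigma_0$, $c_m=\sigma_1$). The section enforces the cascade equations $\mathcal{F}_{H,K}(w_j)=0$ from \eqref{eq:4.2.2} together with the boundary conditions $w_1(-\infty)\in W^u(x_0;-\nabla h)$, $w_m(+\infty)\in W^s(x_1;-\nabla h)$ and the $m-1$ matching conditions $w_{j+1}(-\infty)=\Phi_h(t_j)(w_j(+\infty))$. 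The linearization splits as a direct sum of the Fredholm operators $D\mathcal{F}_{H,K}(w_j)$ handled in the proof of Proposition~\ref{prop:4.2.1} (now with Morse--Bott asymptotes, contributing the extra $\dim c_{j-1}$ terms via the Morse--Bott form of Lemma~\ref{lem:4.2.2}) plus finite-rank contributions from the evaluation and matching constraints. A standard bookkeeping of these indices, combined with the free $\mathbb{R}^m$-action of cascade reparametrization, produces the advertised dimension $\nu(x_0)-\nu(x_1)-1$.

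The substantive step is universal surjectivity of the linearization when $K$ is allowed to vary in $\mathbf{K}$. I would form the universal cascade moduli space, treating $K$ as an additional Banach parameter, and apply the Sard--Smale theorem to extract a residual set of regular $K$. Here the density of $\mathbf{K}$ among nuclear perturbations (Proposition~\ref{prop:2.2}) together with the bumps $\rho(\|w-w_0\|)k_0$ built into the definition of $\mathbf{K}$ provides enough freedom to perturb the linearization near any non-critical point on any $w_j$, while Proposition~\ref{prop:3.4} confines each cascade to a compact subset of $C^\alpha\oplus C^\beta$ so that these localized perturbations behave uniformly. Unique continuation for the formally self-adjoint linearized operator then propagates pointwise surjectivity to full cokernel control. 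The main obstacle I expect is precisely the cascade-matching condition: the twist by $\Phi_h(t_j)$ couples cokernel elements of neighboring $w_j$'s, so transversality must be argued globally along the concatenated trajectory rather than cascade by cascade, in the spirit of the transversality results of \cite{Fra,BoO,CiF}; the detailed proof of this universal surjectivity is what Section~\ref{sec:6} should provide.
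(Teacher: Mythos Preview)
Your overall strategy---Fredholm index on each cascade stratum, universal-moduli Sard--Smale transversality in $K$, then assembly of the strata---is the paper's three-step proof. Three points need sharpening, however.

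First, the Fredholm property in the Morse--Bott situation is \emph{not} available from Proposition~\ref{prop:4.2.1} as you suggest: since ${\rm Hess}\,\mathcal{A}_H$ has nontrivial kernel at each asymptotic limit, the operator $D\mathcal{F}_{H,K}(w_j)$ on unweighted $W^{1,2}\to L^2$ fails to be Fredholm. The paper works instead in the weighted spaces $W^{1,2}_d,\,L^2_d$ with $d$ below the spectral gap \eqref{eq:5.1.1}, and conjugates by the weight $s_d$ to shift the asymptotic operators to ${\rm Hess}\,\mathcal{A}_H(p_j)+d\,{\rm id}$ and ${\rm Hess}\,\mathcal{A}_H(q_j)-d\,{\rm id}$, which \emph{are} invertible; this is what yields the index formula \eqref{eq:5.1.5} and hence \eqref{eq:5.1.2}. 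Your phrase ``converging exponentially'' hints at this, but the conjugation trick is the actual content. Second, the universal surjectivity is not deferred to Section~\ref{sec:6}: it is carried out here as Step~2, arguing (as you sketch) via a hypothetical cokernel element $\eta=(\eta_j)$, localized bumps $\kappa^\epsilon$ in $\mathbf{K}$, and Proposition~\ref{prop:2.2}, all in the weighted $L^2_d$ setting. Section~\ref{sec:6} concerns the Morse case and perturbs $H$, not $K$. Third, your assembly of strata (``disjoint, same dimension, hence a manifold'') is too loose. The paper's Step~3 is an induction on $l$: one compactifies $\mathcal{M}_{\le l}$ to a manifold with corners, identifies $\partial\bar{\mathcal{M}}_{\le l}=\partial\widehat{\mathcal{M}}_{l+1}$, and glues across this common boundary to obtain $\mathcal{M}_{\le l+1}$ as a manifold. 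The corners arise from the constraints $t_j\in\mathbb{R}_+$, and it is this boundary-matching, not a bare disjoint union, that gives $\widehat{\mathcal{M}}^{a,b}$ its manifold structure.
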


 The idea of the proof of the theorem is owned to Frauenfelder \cite[Theorem A.11]{Fra}.
By (\ref{eq:4.3}), we can write ${\rm Hess}\mathcal{A}_H(z,\lambda)=T_1+T_2,$ where
$$
T_1=\mathcal{D}_sL\oplus Id_{\mathbb{R}},\quad (T_1)^2=Id_\mathcal{E},
$$
and
\begin{eqnarray*}T_2=\begin{pmatrix}
-\lambda\mathcal{D}_sH_{zz}(x,z)&-\mathcal{D}_sH_z(x,z) \\
-\big(\mathcal{D}_sH_z(x,z)\big)^*&-1
\end{pmatrix}
\end{eqnarray*}
is compact on $\mathcal{E}$. Since  $(T_1+T_2)^2=Id_\mathcal{E}+T_1T_2+T_2T_1+T_2^2$, and $T_1T_2+T_2T_1+T_2^2$ is a compact self-adjoint operator, by the spectral theory of compact operator
we obtain that the spectrum of $(T_1+T_2)^2$ can be described as a sequence of discrete points tending to $1$, and thus ${\rm Spec}({\rm Hess}\mathcal{A}_H(z,\lambda))\setminus\{0\}$ is away from $0$.
Let $d$ be a constant such that
\begin{equation}\label{eq:5.1.1}
0<d<\min\{|\lambda|:\lambda\in{\rm Spec}({\rm Hess}(\mathcal{A}_H)(w))\setminus\{0\},~w\in{\rm Crit}^{[a,b]}(\mathcal{A}_H)\},
\end{equation}
and let $s_d:\mathbb{R}\to\mathbb{R}$ be the weight function defined by $$s_d(t)=e^{d\vartheta(t)t},$$
where $\vartheta(t)$ is a smooth cutoff function satisfying $\vartheta(t)=-1$ for $t<0$ and $\vartheta(t)=1$ for $t>1$.
We  introduce suitable weighted Sobolev spaces as follow:
\begin{eqnarray*}
&&L^2_d(\mathbb{R},\mathcal{E}):=\left\{w\in L^2(\mathbb{R},\mathcal{E})\,|\, s_dw\in L^2(\mathbb{R},\mathcal{E})\right\},\\
&&W^{1,2}_d(\mathbb{R},\mathcal{E}):=\left\{w\in W^{1,2}(\mathbb{R},\mathcal{E})\,|\, s_dw\in W^{1,2}(\mathbb{R},\mathcal{E})\right\}
\end{eqnarray*}
with weighted norms
\begin{eqnarray*}
&&\|w\|_d:=\|s_dw\|_{L^2(\mathbb{R},\mathcal{E})},\quad w\in L^2_d(\mathbb{R},\mathcal{E}),\\
&&\|w\|^2_{1,d}:=\|s_dw\|^2_{L^2(\mathbb{R},\mathcal{E})}+\|s_dw'\|^2_{L^2(\mathbb{R},\mathcal{E})},
\quad w\in W^{1,2}_d(\mathbb{R},\mathcal{E}).
\end{eqnarray*}

 We define $$\mathcal{X}=\mathcal{E}^{a,b}_d(H,h)$$
as the Banach manifold consisting of all tuples
$$
w=((w_j)_{1\leq j\leq m},(t_j)_{1\leq j\leq m-1})\in \big(W^{1,p}_{loc}(\mathbb{R},\mathcal{E})\big)^m\times
({\mathbb{R}_+\setminus\{0\}})^{m-1}
$$
with the following properties:
\begin{description}
\item[(i)] For $1\leq j\leq m$ there exist $p_j,q_j\in{\rm Crit}^{[a,b]}(\mathcal{A}_H)$ such that
$$w_j(-\infty)=p_j,\quad w_j(+\infty)=q_j,\quad w_j\in \bar{w}_j+W^{1,2}_d(\mathbb{R},\mathcal{E}),$$
where  $\bar{w}_j:\mathbb{R}\to \mathcal{E}$ is a smooth map which for some $R\in\mathbb{R}$ satisfies $\bar{w}_j(t)\equiv p_j$ for $t\leq -R$ and $\bar{w}_j(t)\equiv q_j$ for $t\geq R$.
\item[(ii)] $p_{j+1}=\Psi_h(t_j)q_j$ for $1\leq j\leq m-1$.
\end{description}
There exist two natural evaluation maps
\begin{eqnarray*}
&&{\rm ev}_1:\mathcal{X}\to{\rm Crit}^{[a,b]}(\mathcal{A}_H),\quad w\mapsto w_1(-\infty)=p_1,\\
&&{\rm ev}_2:\mathcal{X}\to{\rm Crit}^{[a,b]}(\mathcal{A}_H),\quad w\mapsto w_m(+\infty)=q_m.
\end{eqnarray*}
The tangent space of $\mathcal{X}$ at $w$ can be identified with a subspace of
\begin{eqnarray*}
\begin{array}{c}
\bigoplus\limits_{j=1}^m\bigl(W^{1,2}_d(\mathbb{R},\mathcal{E})
\times T_{p_j}{\rm Crit}^{[a,b]}(\mathcal{A}_H)\times T_{q_j}{\rm Crit}^{[a,b]}(\mathcal{A}_H)\bigr)\times\mathbb{R}^{m-1}
\end{array}
\end{eqnarray*}
consisting of tuples $\omega=((\xi_j,\zeta_{j,1},\zeta_{j,2})_{1\leq j\leq m},(\tau_j)_{1\leq j\leq m-1})$
which satisfy
$$d\Psi_h(t_j)\zeta_{j,2}+\frac{d}{dt}(\Psi_h(0)q_j)\tau_j
=\zeta_{j+1,1}.$$
$T_w \mathcal{X}$ is a Banach space with norm
$$
\|\omega\|:=\sum\limits^m_{j=1}(\|\xi_j\|_{1,d}+|\zeta_{j,1}|_g
+|\zeta_{j,2}|_g)+\sum\limits^{m-1}_{j=1}|\tau_j|,
$$
where $|\cdot|_g$ is the norm with respect to the Riemannian metric $g$ on ${\rm Crit}(\mathcal{A}_H)$. Consider the map
$$\mathfrak{F}_K:\mathcal{X}\to \bigoplus\limits_{j=1}^m L^2_d(\mathbb{R},\mathcal{E}),\quad w\mapsto( w_j'(t)+\nabla^K \mathcal{A}_H(w_j(t)))_{1\leq j\leq m}.$$
Let $\mathcal{M}_K=\mathfrak{F}_K^{-1}(0)$. The differential of $\mathfrak{F}_K$ at $w\in\mathcal{M}_K$ is denoted by
$${\rm d}_w\mathfrak{F}_K:T_w\mathcal{X}\to \bigoplus\limits_{j=1}^m L^2_d(\mathbb{R},\mathcal{E}).$$

\noindent{\bf 5.2.2. Proof of Theorem~\ref{th:5.1}}. \quad We prove the theorem in three steps.\\
\textbf{Step 1.} \emph{Let $m=m(w)$ be the number of cascades. Then ${\rm d}_w\mathfrak{F}_K$ is a Fredholm operator of index}
\begin{equation}\label{eq:5.1.2}
{\rm ind}({\rm d}_w\mathfrak{F}_K)=i_{\rm rel}({\rm ev_1}(w))+{\rm dim}_{{\rm ev_1}(w)}{\rm Crit}^{[a,b]}(\mathcal{A}_H)-i_{\rm rel}({\rm ev_2}(w))+m-1.
\end{equation}

For $1\leq j\leq m$ denote by
$${\rm d}_{w,j}\mathfrak{F}_K:W^{1,2}_d(\mathbb{R},\mathcal{E})\to L^2_d(\mathbb{R},\mathcal{E})$$
the restriction of ${\rm d}_w\mathfrak{F}_K$ to
$$
W^{1,2}_d(\mathbb{R},\mathcal{E})\equiv
W^{1,2}_d(\mathbb{R},\mathcal{E})\times\{0\}\times\{0\}
\subset W^{1,2}_d(\mathbb{R},\mathcal{E})
\times T_{p_j}{\rm Crit}^{[a,b]}(\mathcal{A}_H)\times T_{q_j}{\rm Crit}^{[a,b]}(\mathcal{A}_H),
$$
 which is a Fredholm operator given by
\begin{equation*}
{\rm d}_{w,j}\mathfrak{F}_K=\frac{d}{dt}+ K(w_j(t)){\rm Hess}\mathcal{A}_H(w_j(t))
+\{dK(w_j(t))[\cdot]\}\nabla\mathcal{A}_H(w_j(t)).
\end{equation*}
By deforming $K(\cdot)$ linearly to zero as in Proposition~\ref{prop:4.2.1}, we see that
\begin{equation}\label{eq:5.1.3}
{\rm ind}({\rm d}_{w,j}\mathfrak{F}_K)={\rm ind}(L_{w,j}),
\end{equation}
where $L_{w,j}:W^{1,2}_d(\mathbb{R},\mathcal{E})\to L^2_d(\mathbb{R},\mathcal{E})$ is defined by
\begin{equation*}
L_{w,j}=\frac{d}{dt}+{\rm Hess}\mathcal{A}_H(w_j(t)).
\end{equation*}
 By conjugating $L_{w,j}$ with $s_d$
we define the operator
\begin{equation*}
\tilde{L}_{w,j}=s_d L_{w,j} s_{-d}:W^{1,2}(\mathbb{R},\mathcal{E})\to L^2(\mathbb{R},\mathcal{E}).
\end{equation*}
This is also Fredholm and satisfies
\begin{equation}\label{eq:5.1.4}
{\rm ind}(\tilde{L}_{w,j})={\rm ind}(L_{w,j}).
\end{equation}
For $\xi\in W^{1,2}(\mathbb{R},\mathcal{E})$ we calculate
\begin{eqnarray*}
\tilde{L}_{w,j}\xi&=&s_dL_{w,j}(s_{-d}\xi)\\
&=&\frac{d\xi}{dt}+\big({\rm Hess}\mathcal{A}_H(w_j(t))-d(\vartheta(t)+
\vartheta'(t)t){\rm id}\big)\xi.
\end{eqnarray*}
Set
$$
A_j(t):={\rm Hess}\mathcal{A}_H(w_j(t))-d(\vartheta(t)+
\vartheta'(t)t){\rm id}.
$$
Then
$$
A_j(-\infty)={\rm Hess}\mathcal{A}_H(p_j)+d\,{\rm id}\quad\hbox{and}\quad A_j(+\infty)={\rm Hess}\mathcal{A}_H(q_j)-d\,{\rm id}.
$$
The inequality $(\ref{eq:5.1.1})$ implies that $A_j(\pm\infty)$ are invertible and satisfy
\begin{eqnarray*}
&&V^-(A_j(-\infty))=V^-({\rm Hess}\mathcal{A}_H(p_j))\quad\hbox{and}\\
&&V^-(A_j(+\infty))=V^-({\rm Hess}\mathcal{A}_H(q_j))\oplus {\rm ker}({\rm Hess}\mathcal{A}_H(q_j)).
\end{eqnarray*}
It follows from Lemma~\ref{lem:4.2.2} that $\tilde{L}_{w,j}$ is a Fredholm operator of index
\begin{eqnarray}\label{eq:5.1.5}
{\rm ind}(\tilde{L}_{w,j})&=&{\rm dim}(V^-(A_j(-\infty)),V^-(A_j(+\infty)))\notag\\
&=&{\rm dim}(V^-(A_j(-\infty)),V^-({\rm Hess}\mathcal{A}_H(q_j)))\notag\\
&&+{\rm dim}(V^-({\rm Hess}\mathcal{A}_H(q_j)),V^-(A_j(+\infty)))\notag\\
&=&{\rm dim}(V^-({\rm Hess}\mathcal{A}_H(p_j)),V^-({\rm Hess}\mathcal{A}_H(q_j)))\notag\\
&&+{\rm dim}(V^-({\rm Hess}\mathcal{A}_H(q_j)),V^-({\rm Hess}\mathcal{A}_H(q_j))\oplus {\rm ker}({\rm Hess}\mathcal{A}_H(q_j)))\notag\\
&=& i_{\rm rel}(p_j)-i_{\rm rel}(q_j)-{\rm dim}_{q_j}{\rm Crit}^{[a,b]}(\mathcal{A}_H).
\end{eqnarray}
Combining the following index formulation
$${\rm ind}({\rm d}_w\mathfrak{F}_K)=\sum\limits_{j=1}^m{\rm ind}({\rm d}_{w,j}\mathfrak{F}_K)+{\rm dim}_{p_1}{\rm Crit}^{[a,b]}(\mathcal{A}_H)+\sum\limits_{j=1}^m{\rm dim}_{q_j}{\rm Crit}^{[a,b]}(\mathcal{A}_H)+m-1$$
with (\ref{eq:5.1.3}), (\ref{eq:5.1.4}) and (\ref{eq:5.1.5}), we obtain (\ref{eq:5.1.2}).\\
\textbf{Step 2.} \emph{For generic $K\in \mathbf{K}$ the set
$\mathcal{M}_K=\mathfrak{F}_K^{-1}(0)$ is a finite dimensional manifold with the local dimension}
${\rm dim}_w\mathcal{M}_K={\rm ind}\mathfrak{F}_K$.

We define
$$
\mathfrak{F}:\mathcal{X}\times \mathbf{K}\to \bigoplus\limits_{j=1}^m L^2_d(\mathbb{R},\mathcal{E}),
\quad (w,K)\mapsto  \mathfrak{F}_K(w),
 $$
and set
$$
\Theta:=\{(w,K)\in\mathcal{X}\times \mathbf{K}|\mathfrak{F}(w,K)=0\}.
$$
For each $(w,K)\in\Theta$,  the derivative of $\mathfrak{F}$ at $(w,K)$ is given by
$$
{\rm d}\mathfrak{F}(w,K)(\zeta,\kappa)={\rm d}\mathfrak{F}_K(w)\zeta
+(\kappa(w_j)\nabla \mathcal{A}_H(w_j))_{1\leq j\leq m},\quad\forall~ (\zeta,\kappa)\in T_{(w,K)}(\mathcal{X}\times \mathbf{K}).
$$
\textsf{We claim that ${\rm d}\mathfrak{F}(w,K)$ is surjective}. In fact,
since ${\rm d}\mathfrak{F}_K(w)$ is a Fredholm operator, it and hence  ${\rm d}\mathfrak{F}(w,K)$ has a closed range and a finite dimensional cokernel.
 So it suffices to prove that ${\rm Im}({\rm d}\mathfrak{F}(w,K))$ is dense.
 By a contradiction  we assume that $\bigl({\rm Im}({\rm d}\mathfrak{F}(w,K))\bigr)^{\perp}$ contains a
 nonzero element
$$
\eta=(\eta_j)_{1\leq j\leq m}\in \bigoplus\limits_{j=1}^mL^2_d(\mathbb{R},\mathcal{E}).
$$
Then we have
\begin{eqnarray}
&&\sum\limits_{j=1}^m\int_M\langle \eta_j, ({\rm d}\mathfrak{F}_K(w)\zeta)_j\rangle dt =0, \quad \forall~\zeta\in T_w\mathcal{X},\label{eq:5.1.6}\\
&&\sum\limits_{j=1}^m\int_M\langle \eta_j, \kappa(w_j)\nabla \mathcal{A}_H(w_j)\rangle dt =0, \quad \forall~\kappa\in \mathbf{K}.\label{eq:5.1.7}
\end{eqnarray}
(\ref{eq:5.1.6}) implies that for all $\xi_j\in W^{1,2}(\mathbb{R},\mathcal{E})$,
\begin{equation*}
\int_M\left\langle \eta_j,\frac{d}{dt}\xi_j(t)+ K(w_j(t)){\rm Hess}\mathcal{A}_H(w_j(t))\xi_j(t)
+\{dK(w_j(t))[\xi_j(t)]\}\nabla\mathcal{A}_H(w_j(t))\right\rangle dt=0.
\end{equation*}
It follows that $\eta_j$ is $C^1$  for $1\leq j\leq m$. These and (\ref{eq:5.1.7}) yield
that $\eta$ vanishes identically. (See Lemma~\ref{lem:6.2} for a similar proof). This
contradiction leads to the claim.  Hence it follows from the implicit function theorem that $\Theta$ is a Banach manifold.

Consider the projection
$$\pi:\Theta\to \mathbf{K},\quad (w,K)\to K,$$
which has the differential
$$
{\rm d}\pi(w,K):T_{(w,K)}\Theta\to T_K\mathbf{K},\quad (\zeta,\kappa)\to \kappa.
$$
The kernel of ${\rm d}\pi(w,K)$ is isomorphic to the kernel of ${\rm d}\mathfrak{F}_K(w)$. The fact that ${\rm d}\mathfrak{F}(w,K)$ is surjective implies that ${\rm d}\pi(w,K)$ has the same codimension as the image of ${\rm d}\mathfrak{F}_K(w)$. Thus ${\rm d}\pi(w,K)$ is a Fredholm operator of the same index as ${\rm d}\mathfrak{F}(w,K)$. It follows from the Sard-Smale Theorem that all regular values of $\pi$
forms a residual (and thus dense) subset in $\mathbf{K}$. And such regular values correspond to $K$ for which ${\rm d}\mathfrak{F}_K(w)$ is surjective for each $w\in\mathfrak{F}_K^{-1}(0)$.

\noindent{\bf Step 3.} By step 2 for generic $K\in\mathbf{K}$ the set $\widehat{\mathcal{M}}_m^{a,b}(x_0,x_1;H,K,h,g)$ can be endowed with the structure of a manifold with corners of dimension $\nu(x_0)-\nu(x_1)-1$. Set
$$
\mathcal{M}_{\leq l}(x_0,x_1):=\bigcup\limits_{1\leq m\leq l}\widehat{\mathcal{M}}_m^{a,b}(x_0,x_1;H,K,h,g),
\quad\forall l\in \mathbb{N}.
$$
 We show by induction on $l$ that for generic $K\in\mathbf{K}$ the set $\mathcal{M}_{\leq l}(x_0,x_1)$ has the structure of manifold of dimension $\nu(x_0)-\nu(x_1)-1$. For $l=1$ this is clear  since $\mathcal{M}_{\leq 1}(x_0,x_1)=\widehat{\mathcal{M}}_1^{a,b}(x_0,x_1;H,K,h,g)$. As we said above
  Theorem~\ref{th:5.1}, $\mathcal{M}_{\leq l}(x_0,x_1)$ can be compactified to a manifold with corners $\bar{\mathcal{M}}_{\leq l}(x_0,x_1)$ such that
$$
\partial \bar{\mathcal{M}}_{\leq l}(x_0,x_1)=\partial \widehat{\mathcal{M}}_{l+1}^{a,b}(x_0,x_1;H,K,h,g).
$$
So $\mathcal{M}_{\leq l+1}(x_0,x_1)=
\mathcal{M}_{\leq l}(x_0,x_1)\cup\widehat{\mathcal{M}}_{l+1}^{a,b}(x_0,x_1;H,K,h,g)$ has a finite dimensional manifold structure with
$$
{\rm dim}\mathcal{M}_{\leq l+1}(x_0,x_1)={\rm dim}\mathcal{M}_{\leq l}(x_0,x_1)=\nu(x_0)-\nu(x_1)-1.
$$
\qed

Assume that $\nu(x_0)=\nu(x_1)+1$. Then  the set $\widehat{\mathcal{M}}^{a,b}(x_0,x_1;H,K,h,g)$ is finite
by Theorem~\ref{th:5.1}. Put
$$n(x_0,x_1)=\#\widehat{\mathcal{M}}^{a,b}(x_0,x_1;H,K,h,g).$$
The chain group $BC_*^{[a,b]}(H,K,h,g)$ is defined as the finite dimension $\mathbb{Z}_2$-vector space given by
$$
BC_*^{[a,b]}(H,K,h,g):={\rm Crit}^{[a,b]}(h)\otimes\mathbb{Z}_2,
$$
where  ${\rm Crit}^{[a,b]}(h):={\rm Crit}(h)\cap {\rm Crit}^{[a,b]}(\mathcal{A}_H)$.
The grading is given by the above index $\nu$ and the differential operator is defined by
$$
\partial x=\sum\limits_{\substack{y\in{\rm Crit}^{[a,b]}(h),\\\nu(y)=\nu(x)-1}}
\big(n(x,y)\hspace{1mm}{\rm mod}[2]\big)y.$$
Using the compactness in Proposition~\ref{prop:3.4} and a standard gluing construction as in \cite{Fra}, we can prove that $\partial^2=0$.
Thus $\big(BC_*^{[a,b]}(H,K,h,g),\partial\big)$ is a chain complex, still called the
\emph{Rabinowitz Floer complex} of $\mathcal{A}_H$.
 The corresponding homology
$$
HF_*^{[a,b]}(H,K,h,g):=H_*\big(BC_*^{[a,b]}(H,K,h,g)
\partial\big)
$$
is called the \emph{Rabinowitz Floer homology} of $\mathcal{A}_H$.
Standard arguments show that $HF_*^{[a,b]}(H,K,h,g)$ is independent up to canonical isomorphism of the choices of $H,K,h$ and $g$,  see \cite[Frauenfelder]{Fra} for details. So
$HF_*^{[a,b]}(H,K,h,g)$ will be simply denoted by $HF_*^{[a,b]}(H)$.

\subsection{Continuation of the Rabinowitz-Floer homology}\label{sec:5.2}
In this subsection, we show that under a small perturbation of the pair $(H,K)$ there exists a natural isomorphism between the Rabinowitz-Floer homology of $\mathcal{A}_H$ and that of the pertubed functional $\mathcal{A}_{\widetilde{H}}$. Then by taking a partition of a smooth path connecting from $(H_0,K_0)$ to $(H_1,K_1)$, under suitable hypotheses we prove $RHF_k^{[a,b]}(H_0,K_0)=RHF_k^{[a,b]}(H_1,K_1)$.
In order to control length of this paper we only consider the Morse situation.
The similar proof can be completed in the Morse-Bott situation; see \cite[Appendix A]{Fra}.

Fix a constant $\epsilon$ as in Proposition~\ref{prop:3.3}. Assume that $(H_0,K_0), (H_1,K_1)\in\Omega_{reg}$ and
\begin{equation}\label{5.2.1}
\sup_{z\in E_s}\int_M|H_0(x,z)-H_1(x,z)|dx+\|K_0-K_1\|_{\mathbf{K}}<\frac{\epsilon}{10}.
\end{equation}
Let $\beta(t)\in C^{\infty}(\mathbb{R},[0,1])$ satisfy $\beta(t)\equiv0$ for $t\leq0$, $\beta(t)\equiv1$ for $t\geq1$, and $0\leq\beta^\prime(t)\leq2$ for all $t$. We define the $t$-dependent functions $H(t,x,z)$, $K(t,w)$ by
\begin{equation}\notag
H(t,x,z)=(1-\beta(t))H_0(x,z)+\beta(t)H_1(x,z),\quad K(t,w)=(1-\beta(t))K_0(w)+\beta(t)K_1(w).
\end{equation}
It is not hard to check that they satisfy the assumptions (i)-(iii)
 with $A=\epsilon/5$ at the beginning of Section~3.3.
 By replacing $(H,K)$ by an arbitrary small perturbation we can assume that $(H,K)$ is regular in the sense that
  the map $\mathcal{F}_{H,K}:Q_1\to Q_0$ given by
$$
\mathcal{F}_{H,K}(w)(t)=\frac{dw(t)}{dt}+(I+K(t,w(t)))\nabla \mathcal{A}_H(t,x,w(t))
$$
 is transversal to $0\in Q_1$,  where $Q_0$ and $Q_1$ are as in (\ref{eq:4.2.2}).
Hence for given any pair of critical points $w_0\in{\rm Crit}(\mathcal{A}_{H_0})$ and $w_1\in{\rm Crit}(\mathcal{A}_{H_1})$,
if $w(t)$ is a solution of (\ref{eq:3.3.2}) satisfying $w(-\infty)=w_0$ and $w(+\infty)=w_1$,
 by Proposition~\ref{prop:3.3} $w(t)$ is uniformly bounded by a constant depending only on $w_0$ and $w_1$.
This uniform boundedness implies precompactness,
therefore we can define the moduli space of trajectories of the negative non-autonomous gradient flow
\begin{eqnarray} \notag
\overline{\mathcal{M}}(w_0,w_1)=\left\{w\in C^1(\mathbb{R},\mathcal{E})\bigg|\begin{array}{l}
w(t)\hbox{ is a solution of (\ref{eq:3.3.2}) with}\\  w(-\infty)=w_0 \hbox{ and } w(+\infty)=w_1
\end{array} \right\}.
\end{eqnarray}
One can show that $\overline{\mathcal{M}}(w_0,w_1)$ is either empty or a  manifold of dimension
$$
{\rm dim}\overline{\mathcal{M}}(w_0,w_1)=i_{\rm rel}^{H_0}(w_0)-i_{\rm rel}^{H_1}(w_1),
$$
where $i_{\rm rel}^{H_j}(w_j)$ is the relative index with respect to $H_j$ at $w_j$, $j=0,1$.
The key techniques of the proof are compactication of broken trajectories and the gluing construction very similar to that of the autonomous case, which we will not reproduce here. If $i_{\rm rel}^{H_0}(w_0)=i_{\rm rel}^{H_1}(w_1)$, then the integer $n(w_0,w_1):=\#\overline{\mathcal{M}}(w_0,w_1)$ is finite. For each $k\in\mathbb{Z}$, we consider a homomorphism
$$
\Psi_{01}:{\rm CF}_k(H_0)\to{\rm CF}_k(H_1)
$$
defined by
$$
\Psi_{01}\left(\sum^l_{j=1}m_jx_j\right)=\sum^l_{j=1}\sum\limits_{y\in{\rm Crit}_k(\mathcal{A}_{H_1})}(n(x_j,y)\text{ mod }[2])m_jy
$$
for $\sum^l_{j=1}m_jx_j\in {\rm CF}_k(H_0)$ with $x_j\in{\rm Crit}(\mathcal{A}_{H_0})$ and $m_j\in\mathbb{Z}_2$, $j=1,\cdots,m$.
We claim: \textsf{ $\Psi_{01}$ is a chain homomorphism}. That is, the following diagram communicates
\[
\begin{CD}
{\rm CF}_{k+1}(H_0) @>{\partial_{k+1}^0}>>{\rm CF}_k(H_0) \\
@VV\Psi_{01}V   @VV\Psi_{01}V\\
{\rm CF}_{k+1}(H_1)@>{\partial_k^1}>> {\rm CF}_k(H_1),
\end{CD}
\]
where $\partial^j$, $j=0,1$, are boundary operators corresponding to $(H_j,K_j)$.
The proof is standard. In fact, we only need to consider the 1-dimension moduli space $\overline{\mathcal{M}}(w_0,w_1)$ with $i_{\rm rel}^{H_0}(w_0)=i_{\rm rel}^{H_1}(w_1)+1$. The boundary of $\overline{\mathcal{M}}(w_0,w_1)$ then splits into two parts:
\begin{eqnarray*}
\partial\overline{\mathcal{M}}(w_0,w_1)&=&\left(\bigcup\limits_{x\in{\rm Crit}_{k+1}(\mathcal{A}_{H_1})}\overline{\mathcal{M}}(w_0,x)
\times\mathcal{M}_{H_1,K_1}(x,w_1)\right)\\
&&\bigcup\left(\bigcup\limits_{y\in{\rm Crit}_k(\mathcal{A}_{H_0})}\mathcal{M}_{H_0,K_0}(w_0,y)
\times\overline{\mathcal{M}}(y,w_1)\right).
\end{eqnarray*}
The first part appears in $\partial^0\circ\Psi_{01}$ while the second parts does in $\Psi_{01}\circ\partial^1$. The desired claim follows.

 Moreover, we have
\begin{lemma}\label{lem:5.1}
If $(H_l,K_l)\in\Omega_{reg}$, $l=1,2,3$, satisfy
 $$
 \sup\limits_{z\in E_s}\int_M|H_m(x,z)-H_n(x,z)|dx+\|K_0-K_1\|_{\mathbf{K}}<\frac{\epsilon}{5}\quad
 \forall m,n\in\{1,2,3\},
 $$
  then $\Psi_{ln}=\Psi_{lm}\circ\Psi_{mn}$ and $\Psi_{ll}=id$.
In particular, $\Psi_{12}$ is an isomorphism.
\end{lemma}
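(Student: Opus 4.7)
The plan is to establish both identities at the level of the induced maps on Rabinowitz-Floer homology by constructing chain homotopies from suitable ``homotopies of homotopies''. The conclusion that $\Psi_{12}$ is an isomorphism then follows at once: using the cocycle identity with appropriate index choices gives $\Psi_{12}\circ\Psi_{21}=\Psi_{22}=id$ and $\Psi_{21}\circ\Psi_{12}=\Psi_{11}=id$ on the respective Rabinowitz-Floer homology groups. The key preliminary observation, used throughout, is that keeping all intermediate homotopies inside an $\epsilon/5$-ball around the $(H_l,K_l)$ makes Proposition~\ref{prop:3.3} applicable with the uniform bound $A=\epsilon/5$, and Proposition~\ref{prop:3.4} then upgrades the $\mathcal{E}$-bound to a uniform $C^\alpha\oplus C^\beta$ bound, giving compactness of all moduli spaces that appear.

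To prove $\Psi_{ll}=id$ on homology, I would connect the constant (in $t$) homotopy at $(H_l,K_l)$ to itself through a generic one-parameter family $\{(H_l^\tau,K_l^\tau)\}_{\tau\in[0,1]}$ of $t$-dependent pairs, each $\epsilon/5$-close to $(H_l,K_l)$, coinciding with the constant pair at $\tau=0,1$. For a generic family, the parametrized moduli space $\bigcup_{\tau\in[0,1]}\overline{\mathcal{M}}^{\tau}(w_0,w_1)$ is a manifold of dimension $i_{\rm rel}^{H_l}(w_0)-i_{\rm rel}^{H_l}(w_1)+1$; counting its isolated components when this dimension equals $1$ defines a degree-$1$ operator $T$ with $\partial T+T\partial=\Psi_{ll}-\Psi_{ll}^{\rm const}$, while $\Psi_{ll}^{\rm const}=id$ because the autonomous flow produces only the stationary solutions at critical points.

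To prove $\Psi_{ln}=\Psi_{mn}\circ\Psi_{lm}$ on homology, I would introduce the family of concatenated homotopies $(H^R,K^R)$, $R\geq0$, which for large $R$ equals the $l\to m$ homotopy on $(-\infty,-R]$, the constant pair $(H_m,K_m)$ on $[-R,R]$, and the $m\to n$ homotopy on $[R,+\infty)$, while at $R=0$ it reduces to a direct $l\to n$ homotopy. Since $\int_{-\infty}^{+\infty}|\partial_tH^R|\,dt$ is bounded uniformly in $R$ by the sum of the two variations, and each variation is controlled by the hypothesis, Proposition~\ref{prop:3.3} gives a uniform bound in $\mathcal{E}$. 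The one-dimensional part of $\bigcup_{R\ge0}\overline{\mathcal{M}}^R(w_0,w_1)$ for $i_{\rm rel}^{H_l}(w_0)=i_{\rm rel}^{H_n}(w_1)+1$ has $R=0$ boundary counted by $\Psi_{ln}$ and $R\to\infty$ boundary counted by $\Psi_{mn}\circ\Psi_{lm}$ via splitting at a critical point of $\mathcal{A}_{H_m}$ of matching index, yielding the required chain homotopy.

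The main obstacle will be the compactness-plus-gluing analysis at $R\to\infty$: given a sequence of solutions $w_k$ of the $R_k$-homotopy with $R_k\to\infty$, one must extract, after appropriate translations, a subsequence converging to a pair of Floer trajectories meeting at a critical point of $\mathcal{A}_{H_m}$; and conversely, one must show that every such broken configuration of matching total index is uniquely glued into a one-parameter family of $R$-homotopy solutions for all sufficiently large $R$. Convergence uses the $C^\alpha\oplus C^\beta$ bound from Proposition~\ref{prop:3.4} together with local elliptic regularity to pass to the limit in the Rabinowitz-Floer equation; the gluing step adapts the implicit function theorem construction from Section~\ref{sec:4.3} to the non-autonomous setting, with Fredholm property and generic surjectivity of the linearized operator obtained as in Step~2 of the proof of Theorem~\ref{th:5.1}.
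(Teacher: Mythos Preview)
Your proposal is correct in outline and is precisely the standard ``homotopy of homotopies'' argument that the paper has in mind: the paper itself omits the proof and simply writes ``using the result of compactness and the standard arguments in \cite{AnV,CiF,SaD}, we arrive at the conclusion of the above lemma.'' Your sketch of the stretching-the-neck construction for the cocycle identity and the appeal to Propositions~\ref{prop:3.3}--\ref{prop:3.4} for compactness are exactly what those references do.

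Two small points you should tidy up. First, your argument for $\Psi_{ll}=id$ is slightly tangled: you take the homotopy-of-homotopies with \emph{both} endpoints equal to the constant pair, which yields only the trivial relation. In fact no parametrized argument is needed here: since $(H_l,K_l)\in\Omega_{reg}$ the constant-in-$t$ homotopy is already regular, and for index difference zero the only solutions of the autonomous equation are the constant trajectories at critical points (any nonconstant one would come in a free $\mathbb{R}$-family, contradicting transversality), so $\Psi_{ll}=id$ holds at the chain level. The homotopy-of-homotopies is only needed to show independence of the choice of regular continuation data, with one endpoint the constant homotopy and the other the perturbed one.

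Second, your invocation of Proposition~\ref{prop:3.3} for the concatenated family $(H^R,K^R)$ needs a word of care: the total $t$-variation along the concatenation is bounded by the \emph{sum} of the two individual variations, hence by $2\epsilon/5$, while Proposition~\ref{prop:3.3} as stated requires $A<\epsilon/5$. This is the same looseness present in the paper (compare the $\epsilon/10$ in (5.2.1) with the $\epsilon/5$ in the lemma); it is harmless and is cured either by assuming the pairwise distances are $<\epsilon/10$, or by observing that the proof of Proposition~\ref{prop:3.3} only needs $5A<\epsilon$, and one can carry the argument with a slightly smaller $\epsilon$ fixed at the outset.
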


Again using the result of compactness and the standard arguments in \cite{AnV,CiF,SaD}, we arrive at the conclusion of the above lemma. The proof is omitted here. By transversality, for each $H$ satisfying $({\bf H}1)-({\bf H}4)$, there exists  $(\tilde{H},\tilde{K})\in\Omega_{reg}$ such that $|H-\widetilde{H}|$ is small enough, then one can define the Rabinowitz-Floer homology of $\mathcal{A}_H$ to be that of $\mathcal{A}_{\widetilde{H}}$.
\begin{proposition}[\textbf{Global continuation}]\label{prop:5.1}
If $(H_0,K_0),(H_1,K_1)\in\Omega_{reg}$, then it holds that
$$RHF_*^{[a,b]}(H_0,K_0)=RHF_*^{[a,b]}(H_1,K_1).$$
\end{proposition}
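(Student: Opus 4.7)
\proof
The plan is to reduce the global statement to the local isomorphism provided by Lemma~\ref{lem:5.1} via a finite chain of short continuations. First, I would connect $(H_0,K_0)$ and $(H_1,K_1)$ by a continuous path
$$
[0,1]\ni\theta\mapsto (H^\theta,K^\theta)=\bigl((1-\theta)H_0+\theta H_1,\,(1-\theta)K_0+\theta K_1\bigr),
$$
noting that $H^\theta$ satisfies $({\bf H}1)$--$({\bf H}4)$ since these conditions are preserved under convex combinations (the constants $c_0,c_1,c_2$ may be replaced by their maxima over $\theta$, and $({\bf H}4)$ survives by linearity in $H$), and $K^\theta\in\mathbf{K}$ by convexity of $\mathbf{K}$. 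By uniform continuity of $\theta\mapsto H^\theta$ in the norm $\sup_{z\in E_s}\int_M|\cdot|\,dx$ and of $\theta\mapsto K^\theta$ in the $\mathbf{K}$-norm, I can choose a partition $0=\theta_0<\theta_1<\cdots<\theta_N=1$ such that any two consecutive pairs $(H^{\theta_j},K^{\theta_j})$, $(H^{\theta_{j+1}},K^{\theta_{j+1}})$ satisfy the smallness condition
$$
\sup_{z\in E_s}\int_M|H^{\theta_j}(x,z)-H^{\theta_{j+1}}(x,z)|\,dx+\|K^{\theta_j}-K^{\theta_{j+1}}\|_{\mathbf{K}}<\frac{\epsilon}{20}
$$
with $\epsilon$ as in Corollary~\ref{coro:3.1} applied to a common bound on $|\mathcal{A}_{H^\theta}|$.

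Next, I would replace each interior pair $(H^{\theta_j},K^{\theta_j})$, $1\le j\le N-1$, by a regular pair $(\widetilde H^{\theta_j},\widetilde K^{\theta_j})\in\Omega_{reg}$ obtained by a sufficiently small perturbation; this is possible because the transversality argument developed in Section~6 (together with the density of regular $K$ established in Step~2 of the proof of Theorem~\ref{th:5.1}) shows that $\Omega_{reg}$ is residual in the space of admissible pairs. I choose the perturbation small enough so that the smallness condition above is still satisfied with $\epsilon/10$ between $(\widetilde H^{\theta_j},\widetilde K^{\theta_j})$ and $(\widetilde H^{\theta_{j+1}},\widetilde K^{\theta_{j+1}})$, where $\widetilde H^{\theta_0}:=H_0$, $\widetilde K^{\theta_0}:=K_0$ and similarly at $j=N$.

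Then, for each $0\le j\le N-1$ the pair $(\widetilde H^{\theta_j},\widetilde K^{\theta_j}),\,(\widetilde H^{\theta_{j+1}},\widetilde K^{\theta_{j+1}})\in\Omega_{reg}$ satisfies (\ref{5.2.1}), and so the continuation chain map
$$
\Psi_{j,j+1}:{\rm CF}_\ast^{[a,b]}(\widetilde H^{\theta_j})\to{\rm CF}_\ast^{[a,b]}(\widetilde H^{\theta_{j+1}})
$$
is defined and induces a homomorphism
$$
(\Psi_{j,j+1})_\ast:RHF_\ast^{[a,b]}(\widetilde H^{\theta_j},\widetilde K^{\theta_j})\to RHF_\ast^{[a,b]}(\widetilde H^{\theta_{j+1}},\widetilde K^{\theta_{j+1}}).
$$
By Lemma~\ref{lem:5.1} each $(\Psi_{j,j+1})_\ast$ is an isomorphism, since applying that lemma to the triple $\{\widetilde H^{\theta_{j-1}},\widetilde H^{\theta_j},\widetilde H^{\theta_{j+1}}\}$ yields both $\Psi_{j-1,j+1}=\Psi_{j,j+1}\circ\Psi_{j-1,j}$ and $\Psi_{j,j}=\mathrm{id}$, which implies invertibility. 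Composing the isomorphisms yields
$$
RHF_\ast^{[a,b]}(H_0,K_0)\cong RHF_\ast^{[a,b]}(H_1,K_1).
$$

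The main technical obstacle is the joint regularity-plus-smallness statement used in the second paragraph: a priori the regularization might push consecutive pairs too far apart. The resolution is to carry out the partition and the perturbation simultaneously, shrinking the mesh $\max_j(\theta_{j+1}-\theta_j)$ first so that the continuous path pairs are at distance less than $\epsilon/20$, and only then performing perturbations of size less than $\epsilon/20$ at each node. A secondary subtlety is that in passing from an autonomous Morse setup to the nonautonomous continuation equation (\ref{eq:3.3.2}), one must verify the integral bound (\ref{eq:3.3.1}) for the interpolating Hamiltonian with constant $A<\epsilon/5$; this is precisely why the $\epsilon/10$ pointwise bound (\ref{5.2.1}) is imposed, since integration of $\partial_t H$ over $t\in[0,1]$ then yields $A\le\sup_z\int_M|H^{\theta_j}-H^{\theta_{j+1}}|\,dx<\epsilon/10<\epsilon/5$.
\qed
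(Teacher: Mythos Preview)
Your argument captures the idea of the paper's Step~1 (partition of a convex homotopy and repeated application of Lemma~\ref{lem:5.1}), but it has a genuine gap: you tacitly assume that
$$
\sup_{z\in E_s}\int_M|H_0(x,z)-H_1(x,z)|\,dx<\infty,
$$
when you invoke ``uniform continuity of $\theta\mapsto H^\theta$'' in that norm. Since $H^\theta-H^{\theta'}=(\theta-\theta')(H_1-H_0)$, the distance between consecutive nodes is $|\theta_{j+1}-\theta_j|\cdot\sup_{z}\int_M|H_1-H_0|\,dx$, which is $+\infty$ for \emph{every} partition if the supremum is infinite. Under $({\bf H}1)$--$({\bf H}4)$ with fixed $p,q$, the difference $H_0-H_1$ can still grow like $|u|^{p+1}+|v|^{q+1}$ (cf.\ (\ref{eq:2.4.2})), so this supremum is typically infinite and your partition/smallness argument never gets off the ground.

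The paper handles exactly this obstruction in its Step~2: since critical points of $\mathcal{A}_{H_l}$ in $[a,b]$ and all connecting orbits between them lie in a fixed ball $B_R(0)\subset E_s$ (Propositions~\ref{prop:3.1} and~\ref{prop:3.2}), one replaces $H_0$ by a cutoff interpolation $H^\delta(x,z)=\chi_\delta(\|z\|)H_0(x,z)+(1-\chi_\delta(\|z\|))H_1(x,z)$ with $\chi_\delta\equiv1$ on $[0,R]$ and $\chi_\delta\equiv0$ on $[R+\delta,\infty)$. Then $H^\delta-H_1$ is supported on the bounded set $\{\|z\|\le R+\delta\}$, so $\sup_z\int_M|H^\delta-H_1|\,dx<\infty$ and your Step~1 style argument applies to give $RHF_*^{[a,b]}(H^\delta,K^\delta)=RHF_*^{[a,b]}(H_1,K_1)$. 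On the other hand $H^\delta\equiv H_0$ on $B_R$, hence the critical points and connecting orbits of $\mathcal{A}_{H^\delta}$ and $\mathcal{A}_{H_0}$ coincide, which gives $RHF_*^{[a,b]}(H^\delta,K^\delta)=RHF_*^{[a,b]}(H_0,K_0)$. You should add this reduction before running the partition argument.
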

\begin{proof}
We prove the propostion in two steps.\\
\textbf{Step 1.} Let us make an additional hypothesis that $\sup_{z\in E_s}\int_M|H_0(x,z)-H_1(x,z)|dx$ is finite. Let $\epsilon$ be as in (\ref{5.2.1}). Given a smooth path $(H_s,K_s)$ with $s\in[0,1]$ from $(H_0,K_0)$ to $(H_1,K_1)$,  for example one can choose $H_s(x,z)=sH_0(x,z)+(1-s)H_1(x,z)$ and then take a partition $0=s_0< s_1<\cdots<s_m=1$ such that
$$
\sup\limits_{z\in E_s}\int_M|H_{s_{j+1}}(x,z)-H_{s_j}(x,z)|dx<\frac{\epsilon}{5},\quad
\|K_{s_{j+1}}-K_{s_j}\|_{\mathbf{K}}<\frac{\epsilon}{5},\quad j\in\{0,\ldots, m\}.
$$
It follows from Lemma~\ref{lem:5.1} that there exist isomorphisms
$$
\Psi_{j,j+1}:RHF_*(H_{s_j},K_{s_j})=RHF_*(H_{s_{j+1}},K_{s_{j+1}}).
$$
By composing these isomorphisms we get an isomorphism between the Rabinowitz-Floer homologies of $\mathcal{A}_{H_0}$ and $\mathcal{A}_{H_1}$.

\noindent{\bf Step 2.} If $(z,\lambda)\in{\rm Crit}_k^{[a,b]}(\mathcal{A}_{H_l})$ then $z\in \Sigma_1(H_l)=\{z\in E_s\,|\,\int_MH(x, z(x))dx\le 1\}$,
 $l=0,1$. From the proof of Proposition~\ref{prop:3.1}, we see that the assumption $({\bf H}4)$ implies that $\Sigma_1(H_0)$ and $\Sigma_1(H_1)$ are bounded sets in $E_s$. It follows from Proposition~\ref{prop:3.2} that the negative gradient flow lines connecting two critical points are uniformly bounded in $\mathcal{E}$. Take a ball $B_R(0)\subset E_s$ such that $\Sigma_1(H_l)$ and the $z$-components of corresponding connecting orbits are all contained within it. For $\delta>0$, one chooses a smooth function $\chi_\delta(t)$ such that $\chi_\delta(t)=1$ for $0\leq t\leq R$ and $\chi_\delta(t)=0$ for $t\geq (R+\delta)$. Given $K^\delta\in \mathbf{K}$, consider the modified function on $\Sigma M\oplus\Sigma M$,
 $$
H^\delta(x,z)=\chi_\delta(\|z\|)H_0(x,z)+(1-\chi_\delta(\|z\|))H_1(x,z).
$$
Clearly,  $\sup_{z\in E_s}\int_M|H^\delta(x,z)-H_1(x,z)|dx<+\infty$.
By step 1, it holds
\begin{equation}\label{eq:5.2}
RHF_*^{[a,b]}(H^\delta,K^\delta)=RHF_*^{[a,b]}(H_1,K_1)
\end{equation}
 Moreover, since $\Sigma_1(H^\delta)\to\Sigma_1(H_0)$ as $\delta\to 0$, and the Rabinowitz-Floer groups are defined in terms of the critical points and connecting orbits between them,  we obtain
\begin{equation}\label{eq:5.3}
RHF_*^{[a,b]}(H^\delta,K^\delta)=RHF_*^{[a,b]}(H_0,K_0).
\end{equation}
The desired result follows from (\ref{eq:5.2}) and (\ref{eq:5.3}).
\end{proof}

Because of the above theorem we can simply write $RHF_k^{[a,b]}(H)=RHF_k^{[a,b]}(H,K)$.
As we said before, the global continuation also holds in the Morse-Bott situation.
In particular, if $\mathcal{A}_H$ is Morse, we can take  $h$ vanishing identically
on ${\rm Crit}^{[a,b]}(\mathcal{A}_H)$, and obtain $RHF_*^{[a,b]}(H)=HF_*^{[a,b]}(H)$.

\section{Transversality}\label{sec:6}
In this section we first show that
 the nonlinearity $H$ can be slightly perturbed so that $\mathcal{A}_{H}$ is Morse. Then
  following the ideas of Abbondandolo and Majer \cite{AbM} we can make a small perturbation of $\mathcal{A}_{H}$ such that the perturbed functional $\widetilde{\mathcal{A}}$ satisfies the Morse-Smale condition.

Consider the Gevrey space $\mathbf{G}$ of $C^\infty$ functions $h:\Sigma M\oplus\Sigma M\to\mathbb{R}$ with the norm
$$|h|_\mathbf{G}:=\sup\limits_{k\in\mathbb{N}}
\frac{\|h\|_{C^k}}{(k!)^4}<+\infty.$$
Here the $\|\cdot\|_{C^k}$-norm can be explicitly given  as follows:
Because of compactness of $M$ we choose a finite open cover $\{\mathcal{U}_k\}^m_{k=1}$ of $M$ consisting
of domains of chart maps $\varphi_k:\mathcal{U}_k\to\Omega_k$, $k=1,\cdots,m$,  where $\Omega_k=\varphi_k(\mathcal{U}_k)\subset\mathbb{R}^n$
is an open unit ball; and therefore there exist  local trivializations $\Phi_k:\Sigma M\oplus\Sigma M|_{\mathcal{U}_k}\to\mathcal{U}_k\times\Sigma^n\times\Sigma^n$, $k=1,\cdots,m$.
Fix a partition
$\{\lambda_k\}^m_{k=1}$ of unity subordinate to  $\{\mathcal{U}_k\}^m_{k=1}$, and define for any smooth function $h:\Sigma M\oplus\Sigma M\to\mathbb{R}$,
$$
\|h\|_{C^k}=\sum\limits_k\sup\limits_{\bar{\Omega}_k\times
(\Sigma^n)^2}\left\|d^k\bigl((\lambda_k\circ\varphi_k^{-1})\cdot(h\circ\Phi_k^{-1})\bigr)(x,u,v)
\right\|.
$$
It is not hard to see that any alternative choice of finite open covering of charts, trivializations and partition of unity gives an equivalent
norm on $\mathbf{G}$. Let us fix such choice. Then $\mathbf{G}$ is a separable Banach space \cite{Hom}. 

\begin{lemma}\label{lem:6.1}
Assume that $H\in C^2(\Sigma M\oplus\Sigma M)$ satisfies $({\bf H}1)-({\bf H}4)$. Then
$\mathcal{A}_{H+h}$ is a Morse function for a generic perturbation $h$ of $H$ in $\mathbf{G}$.
\end{lemma}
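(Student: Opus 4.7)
The strategy is a classical Sard--Smale argument on a universal moduli space. Consider the map
\begin{equation*}
\Phi:\mathcal{E}\times\mathbf{G}\to\mathcal{E},\qquad\Phi(w,h)=\nabla\mathcal{A}_{H+h}(w),
\end{equation*}
with $\nabla\mathcal{A}_{H+h}$ as in (\ref{eq:2.8}). Since each $h\in\mathbf{G}$ is $C^\infty$ with all derivatives bounded, $H+h$ still satisfies $({\bf H}1)$--$({\bf H}4)$ and Proposition~\ref{prop:2.1} shows $\Phi$ is $C^1$ in $w$; affine dependence on $h$ makes $\Phi$ jointly $C^1$. A regular value $h$ of the projection $\pi:\Phi^{-1}(0)\to\mathbf{G}$ is precisely a perturbation at which every critical point of $\mathcal{A}_{H+h}$ is nondegenerate, i.e., $\mathcal{A}_{H+h}$ is Morse.

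The decisive step is transversality: I want ${\rm d}\Phi(w,h)$ surjective at every zero $(w,h)$ of $\Phi$. The partial ${\rm d}_w\Phi(w,h)={\rm Hess}\,\mathcal{A}_{H+h}(w)$ is a compact perturbation of the self-adjoint isomorphism $\mathcal{D}_sL\oplus{\rm Id}_\mathbb{R}$ (compare (\ref{eq:4.3})), hence self-adjoint Fredholm of index zero, and its image equals the $\mathcal{E}$-orthogonal complement of its kernel. So surjectivity of ${\rm d}\Phi(w,h)$ reduces to showing that every $(\xi,\mu)\in\ker{\rm d}_w\Phi(w,h)$ that is orthogonal to the image of ${\rm d}_h\Phi(w,h)$ vanishes. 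A direct computation yields
\begin{equation*}
{\rm d}_h\Phi(w,h)\cdot k=\begin{pmatrix}-\lambda\mathcal{D}_s k_z(x,z)\\-\int_M k(x,z(x))\,dx\end{pmatrix},
\end{equation*}
and, via (\ref{eq:2.4}), the orthogonality condition becomes
\begin{equation*}
\lambda\int_M\langle k_z(x,z(x)),\xi(x)\rangle\,dx+\mu\int_M k(x,z(x))\,dx=0,\qquad\forall\,k\in\mathbf{G}.
\end{equation*}

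Now I would extract $(\xi,\mu)=0$ from this family of tests. The constant $1$ belongs to $\mathbf{G}$, so $k\equiv 1$ forces $\mu=0$. In the remaining case $\mu=0$, $\xi\neq 0$: if $\lambda=0$, the first block of the kernel equation ${\rm Hess}\,\mathcal{A}_{H+h}(w)(\xi,0)=0$ collapses to $\mathcal{D}_sL\xi=0$, whence $\xi=0$ by $0\notin{\rm spec}(D)$, a contradiction. If $\lambda\neq 0$, elliptic regularity for the stationary and linearized systems places $z$ and $\xi$ in H\"older spaces, so $\xi$ has a well-defined pointwise value; choose $x_0\in M$ with $\xi(x_0)\neq 0$, work in a trivializing chart, and take test perturbations of the form $k(x,u,v)=\varphi(x)\langle\xi(x_0),(u,v)\rangle$ with $\varphi$ a nonnegative Gevrey bump supported near $x_0$. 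Such bumps exist in $\mathbf{G}$ because the Gevrey exponent $4$ strictly exceeds $1$. For such $k$ one has $k_z(x,z(x))=\varphi(x)\xi(x_0)$ in the trivialization, so the orthogonality reduces to $\lambda\int_M\varphi(x)\langle\xi(x_0),\xi(x)\rangle\,dx=0$; shrinking the support of $\varphi$ to $x_0$ yields $\lambda|\xi(x_0)|^2=0$, a contradiction.

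With transversality established, $\Phi^{-1}(0)$ is a $C^1$ Banach submanifold of $\mathcal{E}\times\mathbf{G}$ and the projection $\pi$ is $C^1$ Fredholm of index $0$. Since $\mathbf{G}$ is separable, Sard--Smale produces a residual, hence dense, set of regular values; at each such $h$ the Hessian of $\mathcal{A}_{H+h}$ is invertible at every critical point, so $\mathcal{A}_{H+h}$ is Morse. The main technical hurdle I anticipate is the construction and norm-control of the Gevrey bump perturbations together with the localization argument at an arbitrary point of $M$ where $\xi$ is nonzero; this is precisely the step where the Gevrey exponent $4>1$ is used essentially.
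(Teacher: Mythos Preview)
Your proposal is correct and follows the same Sard--Smale scheme as the paper: define a universal map $\Phi(w,h)=\nabla\mathcal{A}_{H+h}(w)$, show $0$ is a regular value, and project. The difference lies in the transversality step. The paper argues that the range of ${\rm d}_h\Psi(w,h)$ is \emph{dense} in $\mathcal{E}$: constants hit the $\mathbb{R}$-factor, and linear-in-fiber perturbations $\bar X(x,u,v)=a(x)\langle\phi(x),u\rangle+b(x)\langle\varphi(x),v\rangle$ (with Gevrey bounds on $a,b$) give $\bar X_z(x,z(x))=(a\phi,b\varphi)$, whose $\mathcal{D}_s$-images are dense in $E_s$; together with the closed finite-codimension range of ${\rm d}_w\Psi$ this yields surjectivity. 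You instead exploit that ${\rm d}_w\Phi={\rm Hess}\,\mathcal{A}_{H+h}$ is \emph{self-adjoint} Fredholm, so it suffices to kill any $(\xi,\mu)\in\ker{\rm d}_w\Phi\cap({\rm im}\,{\rm d}_h\Phi)^\perp$; you then use $k\equiv 1$ for $\mu$, handle $\lambda=0$ directly via $\mathcal{D}_sL$, and for $\lambda\neq 0$ invoke elliptic regularity for $\xi$ and a localized Gevrey bump to force $\xi(x_0)=0$.

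Both routes are sound. The paper's density argument is a bit shorter since it avoids regularity of the kernel element $\xi$ and the pointwise localization; on the other hand, your case split treats $\lambda=0$ explicitly (the paper asserts $\lambda\neq 0$ at critical points, which is safe for small $h$ by (\textbf{H1}) but would need a word for arbitrary $h\in\mathbf{G}$). Your use of Gevrey bumps is exactly the place where the exponent $4>1$ in the definition of $\mathbf{G}$ is essential, matching the paper's choice of perturbation space.
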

\begin{proof}
We define a map $\Psi:\mathcal{E}\times\mathbf{G}\to \mathcal{E}$ by
\begin{equation}\label{eq:6.1}
\Psi(w,h)=\nabla\mathcal{A}_{H+h}(w),
\end{equation}
where $w=(z,\lambda)\in \mathcal{E}$. One can easily checks that $\Psi$ is a map of class $C^1$.

We first prove that $0$ is a regular value of $\Psi$.
Since we have assume $0\notin{\rm spec}(D)$,
for each $(w,h)\in \Psi^{-1}(0)$ with $w=(z,\lambda)$ it must hold that $\lambda\ne 0$.
 It is not hard to see that
 the derivative of $\Psi$ at $(w,h)\in \Psi^{-1}(0)$  with respect to $w$ is given by
\begin{equation}\notag
\begin{array}{l}
{\rm d}_w\Psi(w,h)=
\begin{pmatrix}
\mathcal{D}_sL&0 \\
0&1
\end{pmatrix}
+
\begin{pmatrix}
-\lambda\mathcal{D}_s(H_{zz}+h_{zz})&-\mathcal{D}_s(H_z+h_z) \\
-\big(\mathcal{D}_s(H_z+h_z)\big)^*&-1
\end{pmatrix}.
\end{array}
\end{equation}
This is Fredholm operator with Fredholm index~$0$ since it is a compact perturbation of the invertible operator $\scriptscriptstyle\begin{pmatrix}\begin{smallmatrix}\mathcal{D}_sL&0 \\0&1\end{smallmatrix}\end{pmatrix}$. Hence the range of ${\rm d}_w\Psi(w,h)$ has finite codimension. To prove that ${\rm d}\Psi(w,h)$ is surjective, we only need to show that the range of the derivative of $\Psi$ with respect to $h$ at $(w,h)$ given by
\begin{equation}\label{eq:6.2}
\begin{array}{l}
{\rm d}_h\Psi(w,h)X=
\begin{pmatrix}
-\lambda\mathcal{D}_s X_z(x,z) \\
 -\int_M X(x,z)dx
 \end{pmatrix}
\end{array}
\end{equation}
is dense in $\mathcal{E}$, where $X\in T_h\mathbf{G}=\mathbf{G}$. Choosing $X\equiv constant\neq0$ and substituting it into (\ref{eq:6.2}), we see that the second component of ${\rm d}_h\Psi(w,h)X$ spans $\mathbb{R}$.

 \noindent{\bf Claim}. {\it The first component of ${\rm d}_h\Psi(w,h)$ has dense range in $E_s$}.

  In fact, consider the element of $C^1(M,\Sigma M\oplus\Sigma M)$
 of form $(a\phi, b\varphi)$, where $(\phi, \varphi)\in C^1(M,\Sigma M\oplus\Sigma M)$
 and $a, b\in C^\infty(M)$ satisfy the following condition
 $$
 \sup_{x\in M}\{|{\rm d}^ja(x)|,|{\rm d}^jb(x)|\}\leq C(j!)^4\quad\forall j\in\mathbb{N}\cup\{0\}
 $$
 for some constant $C$. Define the function $\bar{X}:\Sigma M\oplus\Sigma M\to\mathbb{R}$ by
\begin{equation*}
\bar{X}(x,u,v)=a(x)\langle\phi(x),u\rangle+b(x)\langle\varphi(x),v\rangle.
\end{equation*}
 Then $\bar{X}\in \mathbf{G}$ and
 $$
 \bar{X}_z(x,z(x))=\big( a(x)\phi(x),b(x)\varphi(x)\big)^T.
 $$
 Denote by $\triangle$ the set consisting of all such $\bar{X}$.
Since  $\{\bar{X}_z\,|\, \bar{X}\in\triangle\}$ is dense in $C^1(M,\Sigma M)\times C^1(M,\Sigma M)$, and $\mathcal{D}_s$ maps this set into a dense subspace in $E_s$, we deduce that the set
$$
\{\hbox{the first component of}\;{\rm d}_h\Psi(w,h)\bar{X}\,|\,\bar{X}\in\triangle\}
=\{-\lambda\mathcal{D}_s X_z\,|\,\bar{X}_z\in \triangle\}
$$
is dense in $E_s$. Here we use the fact that $\lambda\ne 0$, which comes from
the assumption that $0\notin{\rm Spect}(D)$ as showed at the beginning.
 Hence $0$ is a regular value of $\Psi$.

Next we consider the $C^1$-submanifold $\mathcal{Z}=\{(w,h)\in\mathcal{E}\times\mathbf{G}\,|\,
\Psi(w,h)=0\}$ and the projection $\pi:\mathcal{Z}\to\mathbf{G}$ given by $\pi(w,h)=h$. By a standard argument \cite{ScM}, the Fredholm property of ${\rm d}_w\Psi(w,h)$ implies that the derivative ${\rm d}\pi(w,h)$ is Fredholm and has the same index $0$ as ${\rm d}_w\Psi(w,h)$.
Therefore all regular values of $\pi$ form a residual subset of $\mathbf{G}$
by the Sard-Smale theorem \cite{Sma}. Moreover,  $h\in \mathbf{G}$ is a regular value of $\pi$
if and only if  $\mathcal{A}_{H+h}$ is a Morse functional. Hence $\mathcal{A}_{H+h}$ is a Morse functional for a generic $h\in\mathbf{G}$.
\end{proof}

 With the same strategy as the above arguments we shall prove
\begin{lemma}\label{lem:6.2}
Let $H\in C^2(\Sigma M\oplus\Sigma M)$ satisfying $({\bf H}1)-({\bf H}4)$. Then for generic $K\in\mathbf{K}$ the map $\mathcal{F}_{H,K}:Q_1\to Q_0$ in (\ref{eq:4.2.2}) satisfying $i_{\rm rel}(w_1)-i_{\rm rel}(w_2)=0$
has the regular value $0\in Q_0$.
\end{lemma}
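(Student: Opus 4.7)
The plan is a Sard--Smale universal moduli space argument paralleling the proof of Lemma~\ref{lem:6.1} (with $\mathbf{G}$ replaced by $\mathbf{K}$) and Step~2 in the proof of Theorem~\ref{th:5.1}. I would first assume, via Lemma~\ref{lem:6.1} if necessary, that $\mathcal{A}_H$ is Morse, so that its critical points $w_1,w_2$ are isolated and nondegenerate. Define the $C^1$ universal map
\[
\Phi:Q_1\times\mathbf{K}\to Q_0,\qquad \Phi(w,K)=\frac{dw}{dt}+(I+K(w))\nabla\mathcal{A}_H(w),
\]
and consider the projection $\pi:\Phi^{-1}(0)\to\mathbf{K}$. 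Since $d_w\mathcal{F}_{H,K}(w)$ is Fredholm of index $i_{\rm rel}(w_1)-i_{\rm rel}(w_2)=0$ by Proposition~\ref{prop:4.2.1}, and $\mathbf{K}$ is a separable Banach space, the Sard--Smale theorem reduces the lemma to showing that $d\Phi(w,K)$ has dense range at every $(w,K)\in\Phi^{-1}(0)$; the regular values of $\pi$ are then precisely the $K$ making $0$ a regular value of $\mathcal{F}_{H,K}$.

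For density, suppose $\eta\in L^2(\mathbb{R},\mathcal{E})$ annihilates ${\rm Im}\,d\Phi(w,K)$. Testing against pure $w$-variations shows that $\eta$ is a weak, and by a standard ODE bootstrap $C^1$, solution of the adjoint linearized equation along $w(\cdot)$. Testing against pure $K$-variations yields
\[
\int_{-\infty}^{+\infty}\big(\eta(t),\,\kappa(w(t))\nabla\mathcal{A}_H(w(t))\big)_{\mathcal{E}}\,dt=0\quad\forall\kappa\in\mathbf{K}.
\]
If $w_1=w_2$ then by Morseness $w$ is constant at a critical point and $d_w\mathcal{F}_{H,K}(w)$ is already an isomorphism (its asymptotic operator is invertible with no imaginary spectrum), so there is nothing to prove. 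Otherwise $w:\mathbb{R}\to\mathcal{E}$ is nonconstant, and the set of $t$ with $\nabla\mathcal{A}_H(w(t))\neq 0$ is open and dense. Fix such $t_0$ and an arbitrary $y\in C^2\oplus\mathbb{R}$; Proposition~\ref{prop:2.2} supplies a finite-rank $k_0\in\mathcal{NS}(\mathcal{E},C^2\oplus\mathbb{R})$ with $k_0\big(\nabla\mathcal{A}_H(w(t_0))\big)=y$. Running over the family of cutoffs $\kappa_s(w):=\rho(\|w-w(s)\|)k_0\in\mathbf{K}$ from the generators~(\ref{met:4}) with $s\in\mathbb{R}$, the orthogonality relation becomes
\[
\int_{\mathbb{R}}\rho(\|w(t)-w(s)\|)\,F(t)\,dt=0\quad\forall s\in\mathbb{R},\qquad F(t):=\big(\eta(t),\,k_0(\nabla\mathcal{A}_H(w(t)))\big)_{\mathcal{E}}.
\]
Since $w$ is an immersion at non-critical times, the localized kernels $t\mapsto\rho(\|w(t)-w(s)\|)$ are peaked near $s$ and their closed linear span is rich enough to force the continuous $F$ to vanish. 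Evaluating at $t_0$ gives $(\eta(t_0),y)_{\mathcal{E}}=0$; since $y$ is arbitrary in the dense subspace $C^2\oplus\mathbb{R}\subset\mathcal{E}$, $\eta(t_0)=0$, and continuity plus density of non-critical times yields $\eta\equiv 0$.

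The main obstacle will be the unique-continuation step just sketched: verifying that the cutoff family $\{\kappa_s\}_{s\in\mathbb{R}}\subset\mathbf{K}$, whose generators all share the fixed profile $\rho$ and differ only by translation of the center in $\mathcal{E}$, composes with the orbit $w$ to produce test functions rich enough to force $F\equiv 0$. This is precisely where the nuclear-operator class $\mathbf{K}$ together with Proposition~\ref{prop:2.2} play their decisive role, in analogy with the corresponding step in~\cite{AbM}; the remainder of the proof consists of the standard Fredholm/Sard--Smale machinery of Floer transversality.
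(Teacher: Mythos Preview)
Your overall scheme coincides with the paper's: universal map on $Q_1\times\mathbf{K}$, Fredholmness of $d_w\mathcal{F}_{H,K}$ with index~$0$, annihilator argument for the cokernel, and Sard--Smale to finish. The only substantive divergence is in how you localize to show $\eta(t_0)=0$, and there the paper's device closes exactly the gap you flag as ``the main obstacle.''

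Instead of varying only the center $s$ of bumps with the fixed profile $\rho(\|\cdot-w(s)\|)$ and then arguing that the resulting integral kernels are ``rich enough'' to force $F\equiv 0$---which is genuinely delicate, since the orbit may linger for a long $t$-interval inside a unit $\mathcal{E}$-ball, so $t\mapsto\rho(\|w(t)-w(s)\|)$ need not be localized in $t$ at all---the paper also \emph{scales} the bump. For a fixed non-critical time $t_0$ and any constant $\kappa\in\mathcal{NS}(\mathcal{E},C^2\oplus\mathbb{R})$ one inserts
\[
\kappa^\epsilon(w)=\frac{1}{\epsilon}\,\rho\!\left(\frac{\|w-w(t_0)\|_{\mathcal{E}}}{\epsilon}\right)\kappa
\]
into the orthogonality relation and lets $\epsilon\to 0$. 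Since $w'(t_0)\neq 0$ the $t$-integral concentrates as an approximate identity at $t_0$, yielding directly
\[
\big(\eta(t_0),\,\kappa\,\nabla\mathcal{A}_H(w(t_0))\big)_{\mathcal{E}}=0.
\]
Now Proposition~\ref{prop:2.2} lets $\kappa\,\nabla\mathcal{A}_H(w(t_0))$ range over all of $C^2\oplus\mathbb{R}$, which is dense in $\mathcal{E}$, so $\eta(t_0)=0$; letting $t_0$ run over all of $\mathbb{R}$ (every time is non-critical for a nonconstant flow line) gives $\eta\equiv 0$. This bypasses entirely the unique-continuation issue you identified; no density-of-translates argument is needed. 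Your separate treatment of the constant case $w_1=w_2$ is correct and is implicit in the paper as well.
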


\begin{proof}
We divide our proof into two steps.

\noindent{\bf Step 1.} Consider the map $\mathcal{F}_H:Q_1\times \mathbf{K}\to Q_0$ defined by
$$
\mathcal{F}_H(w,K)=\mathcal{F}_{H,K}(w)=\frac{d w}{dt}+ \nabla^K\mathcal{A}_H(w).
$$
It is of class~$C^1$ by our assumption, and the derivative of $\mathcal{F}_H$ at $(w,K)$ is given by
$$
{\rm d}\mathcal{F}_H(w,K)(y,\kappa)={\rm d}\mathcal{F}_{H,K}(w)y+\kappa(w)\nabla \mathcal{A}_H(w),\quad\forall~ (y,\kappa)\in T_{(w,K)}Q_1\times \mathbf{K}.
$$
Since ${\rm d}\mathcal{F}_{H,K}(w)$ is a Fredholm operator with the index $i_{\rm rel}(w_1)-i_{\rm rel}(w_2)=0$, it has a closed range
and a finite dimensional cokernel. Therefore, ${\rm d}\mathcal{F}_H(w,K)$ has a closed range and a finite dimensional cokernel. We claim that \textsf{$0\in Q_0$ is a regular value of $\mathcal{F}_H$}. Arguing by contradiction, assume that there exists $(w,K)\in(\mathcal{F}_H)^{-1}(0)$ such that ${\rm d}\mathcal{F}_H(w,K)$ is not surjective.
Then there exists $\psi\in L^2(\mathbb{R},\mathcal{E})\setminus\{0\}$ such that
\begin{eqnarray}
&&\int_M\langle\psi(t),{\rm d}\mathcal{F}_{H,K}(w(t))y\rangle dt=0, \quad\forall~y\in W^{1,2}(\mathbb{R},\mathcal{E}),\label{eq:6.3}\\
&&\int_M\langle\psi(t),\kappa(w(t))\nabla \mathcal{A}_H(w(t))\rangle dt=0, \quad\forall~\kappa\in \mathbf{K}.\label{eq:6.4}
\end{eqnarray}
(\ref{eq:6.3}) implies that $\psi(t)$ is a weak solution of the adjoint equation $({\rm d}\mathcal{F}_{H,K}(w(t)))^*\psi(t)=0$ and thus continuous. For any $\kappa\in\mathcal{NS}(\mathcal{E},C^2\oplus\mathbb{R})$ and a fixed $t_0\in\mathbb{R}$, we put
\begin{equation}\label{eq:6.5}
\kappa^\epsilon(w)=\frac{1}{\epsilon}\rho\bigg(\frac{\|w-w(t_0)\|_\mathcal{E}}
{\epsilon}\bigg)\kappa,
\end{equation}
where $\rho$ is as in (\ref{met:4}). Substituting $\kappa^\epsilon$ into (\ref{eq:6.4}) and taking $\epsilon\to 0$, we get
\begin{equation}\label{eq:6.6}
\frac{\int^1_{-1}e^{-1/(1-s^2)}ds}{\|w'(t_0)\|_\mathcal{E}}
\langle\psi(t_0),\kappa\nabla \mathcal{A}_H(w(t_0))\rangle=0.
\end{equation}
Since $\nabla \mathcal{A}_H(w(t_0))\neq0$ and $C^2\oplus\mathbb{R}$ is dense in $\mathcal{E}$, it follows from Proposition~\ref{prop:2.2} that $\psi(t_0)=0$. But $t_0$ is arbitrary.
 We arrive at  $\psi=0$, which contradicts with our assumption $\psi\neq0$. So ${\rm d}\mathcal{F}_H(w,K)$ is onto.

\noindent{\bf Step 2.} Now $\mathcal{Z}:=\mathcal{F}_H^{-1}(0)$ is a $C^1$-submanifold of $Q_1\times \mathbf{K}$ by Step 1,
and the projection $Q_1\times \mathbf{K}\to \mathbf{K}$ restricts to a Fredholm map $\pi:\mathcal{Z}\to\mathbf{K}$ with  index $0$.
 Hence the set of $\pi$'s regular values is of second category by the Sard-Smale theorem and satisfies the property of Lemma~\ref{lem:6.2}.
\end{proof}

\begin{remark}\label{rem:1}
{\rm Lemma~\ref{lem:6.1} and Lemma~\ref{lem:6.2} imply that for generic $h\in\mathbf{G}$ and generic $K\in\mathbf{K}$ the functional $\mathcal{A}_{H+h}$ has the \emph{Morse-Smale property up to order $0$}, that is to say, for each two critical points $w_1,w_2$ of $\mathcal{A}_{H+h}$
such that $i_{\rm rel}(w_1)-i_{\rm rel}(w_2)=0$
the unstable manifold of $w_1$ and the stable mainfold of $w_2$  meet transversally. However,
according to the usual method the perturbed functional should  own the Morse-Smale property up to at least order $2$ for constructing the Rabinowitz-Floer complex.
 This requires that $\nabla\mathcal{A}_{H}$ should be at least of class~$C^3$ because using the Sard-Smale theorem for Fredholm maps to obtain the Morse-Smale property requires the regularity to be strictly higher than the Fredholm index. But it is regrettable that when $\dim M\ge 3$ the functional $\mathcal{A}_H:\mathcal{E}\to \mathbb{R}$
 is at most of class $C^2$  even if $H$ is $C^\infty$.
Fortunately,  the functional $\mathcal{A}_H:\mathcal{E}\to\mathbb{R}$
can be written as
\begin{equation} \notag
\mathcal{A}_H(z,\lambda)=\frac{1}{2}\int_M\langle Lz(x),z(x)\rangle dx+G(z,\lambda),
\end{equation}
where $G(z,\lambda):=-\lambda\int_M\{H(x,z(x))-1\}dx$ and
  the gradient of the functional $G:\mathcal{E}\to\mathbb{R}$,
\begin{equation}\notag
(z,\lambda)\mapsto\nabla G(z,\lambda)=\begin{pmatrix}
-\lambda\mathcal{D}_s H_z(x,z) \\
 -\int_M\big(H(x,z)-1\big)dx
\end{pmatrix},
\end{equation}
is a compact map on $\mathcal{E}$  by Proposition~\ref{prop:2.1}. Thus according to \cite[Appendix B]{AbM}
(see \cite[page 758]{AbM} for precise explanation)  we can approximate $\mathcal{A}_H$ by a smooth functional $\widetilde{\mathcal{A}}$ on $\mathcal{E}$
 with  the Morse-Smale property up to every order.
}
\end{remark}
\begin{lemma}\label{lem:6.3}
Given $h$ as in Lemma~\ref{lem:6.1} and two real numbers $a,b$ with $a<b$, for each $\epsilon>0$ there exists a functional $\widetilde{\mathcal{A}}\in C^\infty(\mathcal{E})$ such that
\end{lemma}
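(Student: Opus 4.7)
The plan is to exploit the decomposition mentioned in Remark~\ref{rem:1},
\begin{equation*}
\mathcal{A}_{H+h}(z,\lambda)=\tfrac12(Lz,z)_{L^2}+G_h(z,\lambda),\qquad
G_h(z,\lambda):=-\lambda\!\int_M\!\bigl(H(x,z)+h(x,z)-1\bigr)dx,
\end{equation*}
and to apply the abstract smoothing construction of Abbondandolo--Majer \cite{AbM} to the \emph{compact} piece $\nabla G_h$ only, leaving the strongly indefinite quadratic part $\tfrac12(Lz,z)_{L^2}$ untouched. The first step is to localize in $\mathcal{E}$: by Corollary~\ref{coro:3.1} and Proposition~\ref{prop:3.2}, all critical points in $\mathcal{A}_{H+h}^{-1}[a,b]$ and all negative-gradient trajectories between them are contained in some ball $B_R\subset\mathcal{E}$; pick a cutoff $\chi\in C^\infty_c(\mathcal{E},[0,1])$ with $\chi\equiv 1$ on $B_R$ and $\chi\equiv 0$ outside $B_{R+1}$, so that any modification we perform inside $B_{R+1}$ leaves the Rabinowitz--Floer complex in the window $[a,b]$ unchanged.

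Next I would use the fact that $\nabla G_h:\mathcal{E}\to\mathcal{E}$ is a $C^1$ compact map (Proposition~\ref{prop:2.1}). By \cite[Appendix B]{AbM} one can find a smooth map $X_\epsilon:\mathcal{E}\to\mathcal{E}$ that is a compact perturbation of $\nabla G_h$, $C^1$-close to it, and is the gradient of some $C^\infty$ functional $\widetilde{G}$ on $\mathcal{E}$; splice this with the original via $\chi$ and set
\begin{equation*}
\widetilde{\mathcal{A}}(z,\lambda)=\tfrac12(Lz,z)_{L^2}+\chi(z,\lambda)\widetilde{G}(z,\lambda)+\bigl(1-\chi(z,\lambda)\bigr)G_h(z,\lambda),
\end{equation*}
chosen with $\|\widetilde{G}-G_h\|_{C^1(B_{R+1})}<\epsilon/2$ (and of course $\|\widetilde{\mathcal{A}}-\mathcal{A}_{H+h}\|_{C^2}<\epsilon$). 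Because $\mathcal{A}_{H+h}$ is Morse by Lemma~\ref{lem:6.1} and its critical set in $\mathcal{A}_{H+h}^{-1}[a,b]$ is finite, a standard implicit-function argument (applied to the Fredholm operator $\mathrm{Hess}\,\mathcal{A}_{H+h}$ at each critical point) shows that if $\epsilon$ is small enough, $\widetilde{\mathcal{A}}$ has exactly the same number of critical points inside $B_R$, each nondegenerate, with the same relative indices $i_{\rm rel}$ as defined in Section~\ref{sec:4.1}.

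Having replaced $\mathcal{A}_{H+h}$ by a $C^\infty$ functional $\widetilde{\mathcal{A}}$ with the same critical data, one may now apply the transversality scheme of Section~\ref{sec:6} (Lemma~\ref{lem:6.2}) on the metric space $\mathbf{K}$: since $\widetilde{\mathcal{A}}$ is smooth, the gradient $\nabla^K\widetilde{\mathcal{A}}$ is of class $C^\infty$ in $(w,K)$, and the Sard--Smale theorem now applies to the parametric $\mathcal{F}_{\widetilde{\mathcal{A}},K}$ for \emph{every} Fredholm index $k$ (not only $k=0$ as before), yielding a residual set of $K$'s in $\mathbf{K}$ for which the Morse--Smale property holds to every order. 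The output is the asserted $\widetilde{\mathcal{A}}$, which together with such a generic $K$ defines the Rabinowitz--Floer complex in the window $[a,b]$, and this complex is canonically isomorphic to the one for $\mathcal{A}_{H+h}$ by the continuation argument of Section~\ref{sec:5.2}.

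The main obstacle is the \emph{gluing step} $\chi\widetilde{G}+(1-\chi)G_h$: one must verify that the cutoff does not create spurious critical points near $\partial B_R$ nor spoil the global $(PS)_c$ condition. This is handled by choosing $\epsilon$ much smaller than $\inf\{\|\nabla\mathcal{A}_{H+h}(w)\|:w\in B_{R+1}\setminus B_R,\ \mathcal{A}_{H+h}(w)\in[a,b]\}$, which is strictly positive thanks to Proposition~\ref{prop:3.1} and the Morse property; the error from differentiating $\chi$ is then dominated by the pre-existing gradient and introduces no new zeros of $\nabla\widetilde{\mathcal{A}}$ in the annular region.
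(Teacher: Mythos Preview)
The paper does not actually supply a proof of Lemma~\ref{lem:6.3}: it simply states the result and defers entirely to \cite[Appendix~B]{AbM} (see Remark~\ref{rem:1}). Your plan is in the same spirit---use the decomposition $\mathcal{A}_{H+h}=\tfrac12(Lz,z)_{L^2}+G_h$ and smooth only the compact piece---so in outline you are doing what the paper intends.

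That said, your execution has a concrete flaw. The functional you write down,
\[
\widetilde{\mathcal{A}}=\tfrac12(Lz,z)_{L^2}+\chi\,\widetilde{G}+(1-\chi)\,G_h,
\]
is \emph{not} in $C^\infty(\mathcal{E})$: outside $B_{R+1}$ it equals $\tfrac12(Lz,z)_{L^2}+G_h=\mathcal{A}_{H+h}$, which is only $C^2$. The cutoff is both unnecessary and harmful. The construction in \cite[Appendix~B]{AbM} gives a \emph{global} smooth approximation $\widetilde{G}$ of $G_h$ (this is precisely the point of that appendix: any $C^1$ functional whose gradient is a compact map can be $C^1$-approximated by a $C^\infty$ one on all of $\mathcal{E}$), so you should simply take $\widetilde{\mathcal{A}}=\tfrac12(Lz,z)_{L^2}+\widetilde{G}$ with no localisation.

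A second issue concerns condition~(iv). Your implicit-function argument produces critical points \emph{near} those of $\mathcal{A}_{H+h}$, not the same ones; but (iv) asks for literally the same critical points and the same connecting orbits (this is what makes the boundary operator of $\widetilde{\mathcal{A}}$ coincide with \eqref{e:BH}). In the \cite{AbM} scheme this is achieved by arranging the smoothing of $G_h$ to vanish on a neighbourhood of the (finite) critical set in $\mathcal{A}_{H+h}^{-1}[a,b]$ and of the connecting orbits between them; then the critical points and the trajectories are unchanged, and the Morse--Smale property up to every order follows from Sard--Smale applied to the now $C^\infty$ vector field $\nabla^K\widetilde{\mathcal{A}}$, as you correctly describe in your last paragraph.
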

\begin{description}
\item[(i)] $\|\mathcal{A}_{H+h}-\widetilde{\mathcal{A}}\|_{C^2(\mathcal{E})}<\epsilon$,
\item[(ii)] $\widetilde{\mathcal{A}}$ satisfies the $(PS)_c$-condition in $[a-\epsilon,b+\epsilon]$,
\item[(iii)] $\widetilde{\mathcal{A}}$ has the Morse-Smale property up to each order,
\item[(iv)] $\widetilde{\mathcal{A}}$ and $\mathcal{A}_{H+h}$ have the same critical points and the same connecting orbits.
\end{description}

Remark that by Lemma~\ref{lem:6.3} the perturbed functional $\widetilde{\mathcal{A}}$ can be used to define a boundary homomorphism
$$\partial: C_k(\widetilde{\mathcal{A}},[a,b])\longrightarrow C_{k-1}(\widetilde{\mathcal{A}},[a,b])$$
which is the same as~(\ref{e:BH}). The invariance of the homology \cite[Section 9]{AbM} implies that different perturbed functionals give the same homology.
The Rabinowitz-Floer homology $RHF_\ast(H)$ is then defined to be the homology of the above complex.


\section{Existence results for the coupled Dirac system}\label{sec:7}
\setcounter{equation}{0}
In this section, we prove Theorem~\ref{th:1.1} via computing the Rabinowitz-Floer homology. To do it, we choose a special nonlinearity
$H_0(x,u,v)=\frac{1}{2}(|u|^2+|v|^2)$ whose homology can be easily computed, then by continuation we get the desired result.

\subsection{Computations of Rabinowitz-Floer homology}\label{sec:7.1}

By (\ref{eq:2.6}), each critical $(z,\lambda)$ of $\mathcal{A}_{H_0}$ satisfies
\begin{eqnarray}\label{eq:7.1}
\left\{ \begin{array}{l}
Lz=\lambda z,\\
\textstyle\int_M H_0(x,z) dx=1.
\end{array} \right.
\end{eqnarray}
So each connected component $\sigma_k$ of ${\rm Crit}(\mathcal{A}_{H_0})$ has the form
$$
\{(z_k,\bar{\lambda}_k)\in\mathcal{E}\times\mathbb{R}\,|\,(z_k,\bar{\lambda}_k)\,\hbox{satisfies}\;(\ref{eq:7.1})\},
$$
where  $\bar{\lambda}_k$ is a fixed eigenvalues of $L$. If $\bar{\lambda}_k$ has multiplicity $m_k$, then $\sigma_k$
is a manifold diffeomorphic to a sphere $S^{2m_k-1}$ and has the tangent space at $(z_k,\bar{\lambda}_k)$
\begin{equation}\notag
T_{(z_k,\bar{\lambda}_k)}\sigma_k=\{(z,0)\in \mathcal{E}\times\mathbb{R}\,|\,
Lz=\bar{\lambda}_kz,\hspace{2mm}(z_k,z)_{L^2}=0\}.
\end{equation}
Since
\begin{equation}\notag
{\rm Hess}\mathcal{A}_{H_0}(z,\lambda)=\begin{pmatrix}
\mathcal{D}_s L-\lambda\mathcal{D}_s&-\mathcal{D}_sz \\
-\big(\mathcal{D}_sz\big)^*&0
\end{pmatrix},
\end{equation}
it is easy to check that
$$
{\rm Ker}\big({\rm Hess}\mathcal{A}_{H_0}(z_k,\bar{\lambda}_k)\big)=T_{(z_k,\bar{\lambda}_k)}\sigma_k.
$$
Therefore, $\mathcal{A}_{H_0}$ is a Morse-Bott functional. As done in the construction of the homology in subsection~\ref{sec:5.1}, for each $k\in\mathbb{Z}$ we  can choose a Morse function $h_k$ and a Riemannian metric $g_k$ on $\sigma_k$
such that $h_k$ has precisely a maximum point $p_k^+$ and a minimum point $p_k^-$. It follows that
$$
\nu(p_k^+)=i_{\rm rel}(p_k^+)+2m_k-1\quad\hbox{and}\quad\nu(p_k^-)=i_{\rm rel}(p_k^-)
$$
since ${\rm ind}(p_k^+)=2m_k-1$ and ${\rm ind}(p_k^-)=0$. Let $h$ be the Morse function on the critical manifold of $\mathcal{A}_{H_0}$, which coincides with $h_k$ on each connected component $\sigma_k$.
To compute the relative indices of critical points of $\mathcal{A}_{H_0}$, we need

\begin{lemma}[{\cite[Proposition 2.4]{AbM}}]\label{lem:7.1}
Let $H_1, H_2$ be two Hilbert spaces, and $T:H_1\to H_2$ an injective bounded linear operator. If $U,V$ are commensurable subspaces of $H_1$, then $T(U),T(V)$ are commensurable subspaces of $H_2$ and ${\rm dim}(T(U),T(V))={\rm dim}(U,V)$.
\end{lemma}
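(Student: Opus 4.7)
I would prove Lemma~\ref{lem:7.1} by interpreting the relative dimension as a Fredholm index and showing this index is preserved under the injective map $T$. The key observation is that commensurability of closed subspaces $U, V \subset H_1$ is equivalent to the restricted orthogonal projection $P_V|_U : U \to V$ being Fredholm, with index precisely
$$
\mathrm{ind}\bigl(P_V|_U\bigr) \;=\; \mathrm{dim}(U \cap V^\perp) - \mathrm{dim}(V \cap U^\perp) \;=\; \mathrm{dim}(U,V),
$$
since $\ker(P_V|_U) = U \cap V^\perp$ and the orthogonal complement of $P_V(U)$ inside $V$ is exactly $V \cap U^\perp$.

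To pass from $H_1$ to $H_2$ via $T$, I would reduce to a self-adjoint setting via the polar decomposition $T = W|T|$, where $|T| = (T^*T)^{1/2}$ is positive self-adjoint and injective on $H_1$, and $W: H_1 \to H_2$ is an isometric embedding onto $\overline{\mathrm{range}(T)}$. Since $W$ is an isometry, it sends closed subspaces to closed subspaces of $H_2$, preserves orthogonality, and therefore trivially preserves both commensurability and the relative dimension. The problem thereby reduces to showing: for a positive self-adjoint injective operator $S$ on a single Hilbert space $H$, the subspaces $S(U), S(V)$ are commensurable closed subspaces with $\mathrm{dim}(S(U), S(V)) = \mathrm{dim}(U, V)$.

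For this reduced problem, I would deform $S$ to the identity along the path $S_t = (1-t)I + tS$, $t \in [0,1]$, each $S_t$ remaining positive self-adjoint and injective. Work with the composed operator
$$
F_t \;:=\; P_{S_t V}\, S_t|_U : U \to S_t V \subset H,
$$
which acts on the fixed domain $U$ and depends continuously on $t$ in operator norm through both $S_t$ and $P_{S_t V}$ (the latter via continuity of the gap metric on closed subspaces). Using self-adjointness of $S_t$ and the identity $S_t^{-1}(V^\perp) = (S_t V)^\perp$, one computes $\ker F_t = U \cap (S_t^2 V)^\perp$ and the cokernel of $F_t$ (inside $S_t V$) as $V \cap (S_t^2 U)^\perp$; by injectivity of $S_t$ these have the same dimensions as $S_t(U) \cap S_t(V)^\perp$ and $S_t(V) \cap S_t(U)^\perp$. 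Hence $\mathrm{ind}(F_t) = \mathrm{dim}(S_t(U), S_t(V))$. Local constancy of the Fredholm index along $t \mapsto F_t$ gives $\mathrm{dim}(U,V) = \mathrm{ind}(F_0) = \mathrm{ind}(F_1) = \mathrm{dim}(S(U), S(V))$, as desired.

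The main obstacle will be verifying the closedness of $S_t V$ (equivalently, of $T(V)$ in $H_2$) and the continuity of $t \mapsto P_{S_t V}$ at $t=1$, where $S_1 = S$ may fail to be boundedly invertible. To handle this, I would exploit the commensurability hypothesis: the spectral decomposition of the compact self-adjoint operator $P_U - P_V$ furnishes the five-piece Halmos decomposition
$$H_1 \;=\; (U \cap V) \oplus (U^\perp \cap V^\perp) \oplus (U \cap V^\perp) \oplus (U^\perp \cap V) \oplus H',$$
on whose three finite-dimensional middle pieces $T$ is trivially dimension-preserving, on whose common kernel part $(U \cap V) \oplus (U^\perp \cap V^\perp)$ the subspaces $U$ and $V$ agree (so $T$ contributes nothing to the relative dimension), and whose remaining piece $H'$ is controlled spectrally so that the positivity and injectivity of $S$ guarantee the required closedness and continuity. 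This reduces the delicate analytic questions to the finite-dimensional pieces, where they are automatic.
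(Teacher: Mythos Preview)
The paper does not supply its own proof of this lemma: it is quoted verbatim as \cite[Proposition 2.4]{AbM} and used as a black box in the index computations of Section~\ref{sec:7.1}. So there is no in-paper argument to compare against; any assessment must be of your proposal on its own merits.

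Your strategy---reinterpret $\dim(U,V)$ as the Fredholm index of $P_V|_U$, reduce via polar decomposition to a positive self-adjoint injective $S$ on a single space, and deform $S_t=(1-t)I+tS$---is sound and would finish the proof immediately whenever $T$ is an isomorphism (or merely bounded below), since then every $S_t$ is invertible, each $S_tV$ is closed, and $t\mapsto P_{S_tV}$ is norm-continuous on the whole interval $[0,1]$.

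The genuine gap is at $t=1$ when $S$ is not bounded below. You correctly flag that $S(V)$ need not be closed and that $t\mapsto P_{S_tV}$ may fail to be continuous there; but your proposed remedy via the Halmos five-space decomposition does not actually close the gap. After peeling off the finite-dimensional summands $U\cap V^\perp$ and $U^\perp\cap V$, the residual piece $H'$ is still infinite-dimensional in general, and on it $U$ and $V$ are in generic position (neither containment nor orthogonality). Nothing about the compactness of $P_U-P_V$ forces $S(H'\cap V)$ to be closed or gives you the needed continuity of the projection family at $t=1$; the phrase ``controlled spectrally so that positivity and injectivity of $S$ guarantee the required closedness'' is an assertion, not an argument. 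Indeed, for a merely injective bounded $T$ the image $T(V)$ of a closed subspace can genuinely fail to be closed, so the statement as written must be read either with closures $\overline{T(U)},\overline{T(V)}$ or with an implicit additional hypothesis---and your proof has to engage with whichever reading you adopt. As it stands, the argument is complete only for bounded-below $T$; for the general case you would need either to consult the original proof in \cite{AbM} or to supply a different mechanism (for instance, working directly with Fredholm pairs rather than projections, or proving an approximation lemma that lets you pass to the limit $t\to 1$ at the level of indices without needing norm-continuity of the projections).
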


  Denote by
  $$
  A=\mathcal{D}_sL\oplus Id_{\mathbb{R}},\hspace{4mm}B(z,\lambda)={\rm Hess}\mathcal{A}_H(z,\lambda).
  $$
Consider unbounded self-adjoint operators on $L^2(M,\Sigma M)\times L^2(M,\Sigma M)\times\mathbb{R}$ defined by
\begin{equation*}
C=\begin{pmatrix}
 L&0 \\
0&1
\end{pmatrix},\hspace{4mm}D(z,\lambda)=\begin{pmatrix}
 L-\lambda Id&-z \\
-(z)^*&0
\end{pmatrix}.
\end{equation*}
Let $j:\mathcal{E}\hookrightarrow L^2(M,\Sigma M)\times L^2(M,\Sigma M)\times\mathbb{R}$ be the inclusion map. Then (\ref{eq:2.4}) implies that
\begin{eqnarray}
&&(Aw,w)_{\mathcal{E}}=\big(C(j(w)),j(w)\big)_{L^2\times\mathbb{R}}\quad\forall w\in \mathcal{E},\label{eq:7.2}\\
&&(B(z,\lambda)w,w)_{\mathcal{E}}=\big(D(z,\lambda)(j(w)),
j(w)\big)_{L^2\times\mathbb{R}}\quad\forall w\in \mathcal{E}.\label{eq:7.3}
\end{eqnarray}
 Let $V$ be a subspace of $\mathcal{E}$. From (\ref{eq:7.2}) and (\ref{eq:7.3}), we deduce that $A$ (resp. $B(z,\lambda)$) is negative definite on $V $ if and only if $C$ (resp. $D(z,\lambda))$ is negative definite on $j(V)$. Let $V^-(C)$ and $V^-(D(z,\lambda))$ be the maximal negative definite subspaces of $C$ and
 $D(z,\lambda)$ respectively.  For a critical point  $w$ of $\mathcal{A}_{H_0}$, by
 Lemma~\ref{lem:7.1} we have
 $$
 i_{\rm rel}(w)={\rm dim}\big(V^-(D(w),V^-(C))\big).
 $$
  Since $L$ is essentially self-adjoint and has compact resolvents in $L^2(M,\Sigma M)\times L^2(M,\Sigma M)$,  the spectrum of $L$ consists of eigenvalues $\bar{\lambda}_l$ satisfying

$$-\infty\downarrow\bar{\lambda}_{-k}<\cdots<\bar{\lambda}_{-1}<0<
\bar{\lambda}_1<\cdots<\bar{\lambda}_k\uparrow+\infty,\hspace{4mm}k\in \mathbb{N}.$$
Note that ${\rm Spec}(L)={\rm Spec}(D)$. (In fact, $L(u,v)=\lambda(u,v)\Longrightarrow D(u\pm v)=\lambda(u\pm v)$, and
$Du=\lambda u$ implies $L(u,v)=\lambda(u,v)$ and $u=v$).
In particular, $0$ is not an eigenvalue of $L$ by our assumption that $0\notin {\rm Spec}(D)$.
Clearly, there exist $z\in L^2(M,\Sigma M)\times L^2(M,\Sigma M)$, $\lambda, \mu\in\mathbb{R}$
such that
 $C(z,\mu)^t=\lambda(z,\mu)^t$
 if and only if
$\mu=\lambda\mu$ and $Lz=\lambda z$.

For each $l\in \mathbb{Z}\setminus\{0\}$, denote by $z_{l,1},\ldots,z_{l,m_l}$ the orthogonal  eigenvectors of $L$ with $L^2$-norm $\sqrt{2}$ corresponding to $\bar{\lambda}_l$ with multiplicity $m_l$. Given $\bar{\lambda}_k$, $k\in \mathbb{Z}$, $n\in\{1,\ldots, m_k\}$, let
$$
z=\sum\limits_{l\in\mathbb{Z}\setminus\{0\}}a_{l,1}z_{l,1}+\cdots+a_{l,m_l}z_{l,m_l}
$$
with $a_{l,1},\ldots,a_{l,m_l}\in \mathbb{C}$,
such that $D(z_{k,n},\bar{\lambda}_k)(z,\mu)^t=\lambda(z,\mu)^t$.
Then we have
\begin{equation}\label{eq:7.4}
-(z_{k,n},z)_{L^2}=\lambda\mu
\end{equation}
 and
\begin{equation}\label{eq:7.5}
\begin{cases}
\begin{array}{l}
a_{l,1}(\bar{\lambda}_l-\bar{\lambda}_k)=\lambda a_{l,1}+\mu\delta^{l,1}_{k,n},\\
a_{l,2}(\bar{\lambda}_l-\bar{\lambda}_k)=\lambda a_{l,2}+\mu\delta^{l,2}_{k,n},\\
\cdots\cdots\cdots\cdots\cdots\\
a_{l,m_l}(\bar{\lambda}_l-\bar{\lambda}_k)=\lambda a_{l,m_l}+\mu\delta^{l,m_l}_{k,n}
\end{array}
\end{cases}
\end{equation}
for each $l\in\mathbb{Z}$.
It follows from (\ref{eq:7.4}) and (\ref{eq:7.5}) that either $\mu\neq0$ and so
$$
z=-\frac{\mu}{\lambda}z_{k,n}\quad\hbox{and}\quad\lambda=\pm\|z_{k,n}\|_{L^2},
$$
or $\mu=0$ and hence $z=a_{l,1}z_{l,1}+\cdots+a_{l,m_l}z_{l,m_l}$
for some $l\neq k$ and $\lambda=\bar{\lambda}_l-\bar{\lambda}_k$.

Summarizing up the above computation, we obtain that
\begin{eqnarray*}
&&V^-(C)=\bigoplus\limits_{l\in\mathbb{N}}{\rm span}_{\mathbb{C}}\{(z_{-l,1},0),\ldots,(z_{-l,m_{-l}},0)\}
\qquad\hbox{and}\\
&&V^-(D(z_{k,n},\bar{\lambda}_k))=\bigoplus\limits_{l<k}{\rm span}_{\mathbb{C}}\{(z_{l,1},0),\ldots,(z_{l,m_l},0)\}\bigoplus{\rm span}_{\mathbb{R}}\{(\frac{\sqrt{2}}{2}z_{k,n},1)\}.
\end{eqnarray*}

Let us discuss the relative index at $(z_{k,n},\bar{\lambda}_k)$ in two cases.

\noindent{\bf Case 1}. If $k<0$, then
\begin{eqnarray*}
&&V^-(D(z_{k,n},\bar{\lambda}_k))\cap\bigl(V^-(C)\bigr)^\perp=\{0\}\qquad\hbox{and}\\
&&[V^-(D(z_{k,n},\bar{\lambda}_k))]^\perp\cap V^-(C)=\bigoplus\limits_{k\leq l<0}{\rm span}_{\mathbb{C}}\{(z_{-l,1},0),\ldots,(z_{-l,m_{-l}},0)\}.
\end{eqnarray*}
It follows that
 $$
 i_{\rm rel}(z_{k,n},\bar{\lambda}_k)=-2\sum\limits_{k\leq l<0}m_l,
 $$
 which implies
$$
\nu(p_k^+)=-1-2\sum\limits_{k+1\leq l<0}m_l\qquad\hbox{and}\qquad
\nu(p_k^-)=-2\sum\limits_{k\leq l<0}m_l.
$$

\noindent{\bf Case 2}. If $k>0$, then
$$
V^-(D(z_{k,n},\bar{\lambda}_k))\cap\bigl(V^-(C)\bigr)^\perp=\bigoplus\limits_{0<l<k}{\rm span}_{\mathbb{C}}\{(z_{l,1},0),\ldots,(z_{l,m_l},0)\}\bigoplus{\rm span}_{\mathbb{R}}\{(\frac{\sqrt{2}}{2}z_{k,n},1)\}
$$
and
$$
\bigl(V^-(D(z_{k,n},\bar{\lambda}_k))\bigr)^\perp\cap V^-(C)=\{0\}.
$$
These lead to
$$
i_{\rm rel}(z_{k,n},\bar{\lambda}_k)=1+2\sum\limits_{0<l<k}m_l,
$$
and thus
$$
\nu(p_k^+)=2\sum\limits_{0<l\leq k}m_l\qquad\hbox{and}\qquad
\nu(p_k^-)=1+2\sum\limits_{0<l<k}m_l.
$$

Since in both cases it holds that
\begin{eqnarray*}
&&\nu(p_k^-)-\nu(p_{k-1}^+)=1\quad\hbox{for}\;
k\neq 0,1, \\
&&\nu(p_{\pm 1}^-)=\pm1\quad\hbox{and}\quad\nu(p_1^-)-\nu(p_{-1}^+)=2,
\end{eqnarray*}
we have one generator for each $\nu(p_k^\pm)$ in the chain
complex $BC_*(H,K,g,h)$ for generic choice of $K$ and $g$, that is to say,
\begin{eqnarray}\label{eq:7.6}
BC_*(H_0,K,g,h)=\left\{ \begin{array}{l}
\mathbb{Z}_2,\hspace{4mm} *=\pm 1,
\\[2pt]\mathbb{Z}_2,\hspace{4mm} *=2\sum_{1\leq j\leq k}m_j,\hspace{1mm}k\in\mathbb{N},
\\[2pt]\mathbb{Z}_2,\hspace{4mm} *=1+2\sum_{1\leq j\leq k}m_j,\hspace{1mm}k\in\mathbb{N},
\\[2pt]\mathbb{Z}_2,\hspace{4mm} *=-2\sum_{1\leq j\leq k}m_{-j},\hspace{1mm}k\in\mathbb{N},
\\[2pt]\mathbb{Z}_2,\hspace{4mm} *=-1-2\sum_{1\leq j\leq k}m_{-j},\hspace{1mm}k\in\mathbb{N},
\\[2pt]0, \hspace{6mm} \hbox{otherwise}.
\end{array} \right.
\end{eqnarray}
Notice that  $\nu(p_k^+)-\nu(p_k^-)\geq3$ for $m_k>1$. But
for $m_k=1$ one can choose $(h_k,g_k)$ such that there exist precisely two flow lines from $p_k^+$ to $p_k^-$ on the connected component $\sigma_k\thickapprox S^1$. In both cases, we get
$$
\partial_{\nu(p_k^+)}=0\quad\hbox{for all}\;k\in\mathbb{Z}\setminus \{0\}.
$$
 To compute homology it remains to compute $\partial_{\nu(p_k^-)}$. Since there exists no generators of index $0$, the image of $\partial_0$ must be zero. Then ${\rm ker} \partial_{-1}=\mathbb{Z}_2$ implies that
\begin{equation}\label{eq:7.7}
RHF_{-1}(H_0)=HF_{-1}(H_0)=\mathbb{Z}_2.
\end{equation}
Furthermore, by continuation we have
\begin{theorem}\label{th:7.1}
Let $H\in C^2(\Sigma M\oplus\Sigma M)$ satisfy $({\bf H}1)-({\bf H}4)$. Then
\begin{eqnarray}\label{eq:7.8}
RHF_{-1}(H)=RHF_{-1}(H_0)=\mathbb{Z}_2.
\end{eqnarray}
\end{theorem}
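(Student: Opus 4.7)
The strategy is to combine the explicit computation $RHF_{-1}(H_0)=\mathbb{Z}_2$ already obtained in (\ref{eq:7.7}) with the global continuation invariance (Proposition~\ref{prop:5.1}) of the Rabinowitz-Floer homology. Since the theorem asserts the second equality as well as the first, everything reduces to exhibiting an admissible homotopy from $H_0$ to $H$ along which continuation applies.

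First I would form the linear path $H_s := (1-s)H_0 + sH$ for $s\in[0,1]$ and verify that each $H_s$ satisfies (\textbf{H}1)--(\textbf{H}4). Condition (\textbf{H}1) for $H_0$ is the identity $\langle H_{0,u},u\rangle+\langle H_{0,v},v\rangle = 2H_0$, so a convex combination still satisfies (\textbf{H}1) with some $c_0\in(0,2)$; conditions (\textbf{H}2)--(\textbf{H}4) are stable under convex combinations since the quadratic terms from $H_0$ are dominated by the superquadratic exponents $p,q>1$. Hence each $\mathcal{A}_{H_s}$ fits the analytic framework of Sections~\ref{sec:2}--\ref{sec:4}, the $(PS)_c$ condition holds, and a partition $0=s_0<s_1<\cdots<s_N=1$ can be chosen so that consecutive pairs $(H_{s_j},K_{s_j})$ and $(H_{s_{j+1}},K_{s_{j+1}})$ are close enough for the local continuation isomorphisms $\Psi_{j,j+1}$ of Lemma~\ref{lem:5.1} to make sense.

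The main obstacle is that the quantity $\sup_{z\in E_s}\int_M|H_{s_j}-H_{s_{j+1}}|\,dx$ need not be finite, since $H$ grows polynomially of degree $p+1,q+1$ while $H_0$ is only quadratic, so Step~1 of the proof of Proposition~\ref{prop:5.1} cannot be applied directly along the path. To overcome this I would use Step~2 of the same proposition: because assumption (\textbf{H}4) forces every critical point of each $\mathcal{A}_{H_s}$ to lie in the bounded set $\Sigma_1(H_s)\subset E_s$, and because Proposition~\ref{prop:3.2} shows that every connecting trajectory between critical points of $\mathcal{A}_{H_s}$ in the window $[a,b]$ is uniformly $\mathcal{E}$-bounded, one may truncate each $H_{s_j}$ outside a large ball in $E_s$ using the cutoff $\chi_\delta$ to produce $H_{s_j}^\delta$ with $\sup_{z}\int_M|H_{s_{j+1}}^\delta-H_{s_j}^\delta|\,dx<\infty$ while preserving the Rabinowitz-Floer complex. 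This gives isomorphisms $RHF_{-1}(H_{s_j})\cong RHF_{-1}(H_{s_{j+1}})$ for every $j$.

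Finally I would reconcile the Morse-Bott situation at $s=0$ with the Morse situation at $s=1$. At $s=0$ the homology was computed via the Morse-Bott chain complex $BC_*^{[a,b]}(H_0,K_0,h,g)$, while for a generic perturbation (Lemma~\ref{lem:6.1}) $\mathcal{A}_H$ is Morse and one uses the Morse Rabinowitz-Floer complex. The global continuation in the Morse-Bott setting, announced in the last paragraph of Section~\ref{sec:5.2} (with reference to \cite{Fra}), provides the canonical isomorphism between these two versions, and composing it with the chain of $\Psi_{j,j+1}$ yields $RHF_{-1}(H)\cong RHF_{-1}(H_0)=\mathbb{Z}_2$, completing the proof.
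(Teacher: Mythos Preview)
Your proposal is correct and follows essentially the same route as the paper: the paper's proof of Theorem~\ref{th:7.1} is simply the phrase ``by continuation'', invoking Proposition~\ref{prop:5.1} (together with its Morse--Bott extension announced at the end of Section~\ref{sec:5.2}) to transport the explicit computation $RHF_{-1}(H_0)=\mathbb{Z}_2$ in (\ref{eq:7.7}) to an arbitrary admissible $H$. Your write-up spells out the details the paper leaves implicit---verifying (\textbf{H}1)--(\textbf{H}4) along a path, the truncation argument of Step~2 to handle the unbounded $\sup_z\int_M|H_{s_j}-H_{s_{j+1}}|$, and the passage between the Morse-Bott and Morse complexes---but the strategy is identical.
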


\subsection{Proof of Theorem~\ref{th:1.1}}\label{sec:7.2}

By the transversality results in Section~\ref{sec:6} and Theorem~\ref{th:7.1} there exist a sequence
of smooth functions $H_n:\Sigma M\oplus\Sigma M\to\mathbb{R}$ satisfying $({\bf H}1)-({\bf H}4)$ and
the following conditions\\
$\bullet$ $H-H_n\in\mathbf{G}\;\forall n$, and $\varepsilon_n:=|H-H_n|_{C^2}\to 0$ as $n\to\infty$;\\
$\bullet$ $RHF_{-1}^{[a,b]}(H_n)\ne 0\;\forall n$ for some two real numbers $a<b$.\\
Thus $\mathcal{A}_{H_n}$ has
  critical points $(z_n,\lambda_n)$ satisfying
$$
\mathcal{A}_{H_n}(z_n,\lambda_n)\in [a,b].
$$
 From the proof of Proposition~\ref{prop:3.1} we see that $\|(z_n,\lambda_n)\|_\mathcal{E}$ is bounded by some constant $C_1$ which is independent of $H_n$. Then
\begin{eqnarray}\label{eq:7.10}
|\mathcal{A}_H(z_n,\lambda_n)-\mathcal{A}_{H_n}(z_n,\lambda_n)|&\leq&
|\lambda_n|\int_M|H(x,z_n)-H_n(x,z_n)|\notag\\
&\leq& C_1\varepsilon_n
\end{eqnarray}
and
\begin{eqnarray}\label{eq:7.11}
\|\nabla \mathcal{A}_H(z_n,\lambda_n)-\nabla \mathcal{A}_{H_n}(z_n,\lambda_n)\|_\mathcal{E}&\leq&
\left\|\mathcal{D}_s\left\{\frac{\partial H}{\partial z}(x,z_n)-\frac{\partial H_n}{\partial z}(x,z_n)\right\}\right\|\notag\\
&&+|\lambda_n|\int_M|H(x,z_n)-H_n(x,z_n)|\notag\\
&\leq&C_0\varepsilon_n+C_1\varepsilon_n,
\end{eqnarray}
where in (\ref{eq:7.11}) we have use the facts: $D_s:E_s^*\to E_s$ is bounded and $L^\infty\times L^\infty\subset E_s^*$, which
imply that the first term of the right side is bounded by $C_0\|H_z-H_{nz}\|_{L^\infty}\leq C_0\varepsilon_n$. From (\ref{eq:7.10}) we may assume that
$$
(z_n,\lambda_n)\in \mathcal{A}_H^{-1}[a-K,b+K]=:\mathcal{S}\subset \mathcal{E}
$$
for some constant $K>0$. Let
\begin{eqnarray}\label{eq:7.12}
\alpha:=\inf\limits_{(z,\lambda)\in\mathcal{S}}\nabla\mathcal{A}_H(z,\lambda).
\end{eqnarray}
\textsf{We claim that $\alpha=0$}. By a contradiction, suppose $\alpha>0$. Then (\ref{eq:7.11}) yields
\begin{eqnarray*}
\|\nabla \mathcal{A}_{H_n}(z_n,\lambda_n)\|_\mathcal{E}&\geq&\|\nabla \mathcal{A}_H(z_n,\lambda_n)\|_\mathcal{E}-\|\nabla \mathcal{A}_H(z_n,\lambda_n)-\nabla \mathcal{A}_{H_n}(z_n,\lambda_n)\|_\mathcal{E}\notag\\
&\geq&\alpha-C_0\varepsilon_n-C_1\varepsilon_n>0
\end{eqnarray*}
for very large $n$. This contradiction shows $\alpha=0$.

Now by (\ref{eq:7.12}) one can choose a sequence $\{(z_n,\lambda_n)\}_{n=1}^\infty\subset \mathcal{S}\subset\mathcal{E}$ such that
\begin{eqnarray*}
\left\{ \begin{array}{l}
\mathcal{A}_{H}(z_n,\lambda_n)\to c\in[a-K,b+K],\\
\nabla \mathcal{A}_{H}(z_n,\lambda_n)\to 0,\quad n\to \infty.
\end{array} \right.
\end{eqnarray*}
That is, $\{(z_n,\lambda_n)\}_{n=1}^\infty$ is a $(PS)_c$-sequence.
It follows from Proposition~\ref{prop:3.1} that there exists a subsequence of $\{(z_n,\lambda_n)\}_{n=1}^\infty$ converging to $(z^*,\lambda^*)$ in $\mathcal{E}$, which is a critical point of $\mathcal{A}_H$. Then (\ref{eq:2.6}) implies that $z^*=(u^*,v^*)\neq0$ and $\lambda^*\neq0$.
Put
\begin{eqnarray*}
\left\{ \begin{array}{l}
u_0={\lambda^*}^{\frac{q+1}{1-pq}}u^*,\\
v_0={\lambda^*}^{\frac{p+1}{1-pq}}v^*.
\end{array} \right.
\end{eqnarray*}
Then $(u_0,v_0)$ is a nontrivial weak solution of the Dirac system (\ref{eq:1.10}) and thus is of class $C^1$ by the elliptic regularity.

\appendix
\section{ Proof of Proposition~\ref{prop:2.1}}\label{app:A}\setcounter{equation}{0}
\setcounter{equation}{0}
\noindent{\bf Step 1.}\; {\it $\mathcal{H}$ is G\^ateaux differentiable}.
Given $z=(u,v)\in E_s$, $h=(\xi,\zeta)\in E_s$ and $t\in (-1,1)\setminus\{0\}$ we have, by the mean value theorem,
\begin{eqnarray*}
&&\frac{H(x,u(x)+t\xi(x),v(x)+t\zeta(x))-H(x,u(x),v(x))}{t}\\
&=&\langle H_u(x,u(x)+\theta_1\xi(x),v(x)+t\zeta(x)),\xi(x)\rangle+
\langle H_v(x,u(x), v(x)+\theta_2\zeta(x)),\zeta(x)\rangle
\end{eqnarray*}
for some $\theta_j=\theta(t,x, z(x), h(x))\in (0,1)$, $j=1,2$. It follows from the condition (\textbf{H}2) that
\begin{eqnarray*}
&&\left|\frac{H(x,u(x)+t\xi(x),v(x)+t\zeta(x))-H(x,u(x),v(x))}{t}\right|\\
&\le& c_1\left(1+|u(x)+\theta_1\xi(x)|^p+ |v(x)+t\zeta(x)|^{\frac{p(q+1)}{p+1}}\right)|\xi(x)|\\
&& + c_1\left(1+|u(x)|^{\frac{q(p+1)}{q+1}}+ |v(x)+\theta_2\zeta(x)|^q\right)|\zeta(x)|\\
&\le& c_1\left(1+ 2^p|u(x)|^p+ 2^p|\xi(x)|^p+ 2^{\frac{p(q+1)}{p+1}}|v(x)|^{\frac{p(q+1)}{p+1}}+ 2^{\frac{p(q+1)}{p+1}}|\zeta(x)|^{\frac{p(q+1)}{p+1}}\right)|\xi(x)|\\
&& + c_1\left(1+|u(x)|^{\frac{q(p+1)}{q+1}}+ 2^q|v(x)|+ 2^q|\zeta(x)|^q\right)|\zeta(x)|.
\end{eqnarray*}
Hereafter $c_1, c_2,\cdots$, denote constants only depending on $H$ and $M$.
By the H\"{o}ler inequality and the Sobolev embedding theorem it is easily checked that the last two lines of the above inequalities are integrable. Using the Lebesgue Dominated Convergence Theorem we deduce
\begin{eqnarray*}
&&\lim_{t\to 0}\frac{\mathcal{H}(x,u+t\xi,v+t\zeta)-\mathcal{H}(x,u,v)}{t}\\
&=&\int_M\lim_{t\to 0}\frac{H(x,u(x)+t\xi(x),v(x)+t\zeta)-H(x,u(x),v(x))}{t}dx\\
&=&\int_M\{\langle H_u(x,u(x),v(x)),\xi(x)\rangle +\langle H_v(x,u(x),v(x)),\zeta(x)\rangle\} dx
\end{eqnarray*}
because $H\in C^2(\Sigma M\oplus\Sigma M)$. Notice that
 ({\bf H}2) implies
\begin{eqnarray*}
&&\int_M|\langle H_u(x,u,v),\xi\rangle| dx\leq c_1\int_M\big(1+|u|^p+|v|^{\frac{p(q+1)}{p+1}}\big)|\xi|dx,\\
&&\int_M|\langle H_v(x,u,v),\zeta\rangle| dx\leq c_1\int_M\big(1+|u|^{\frac{q(p+1)}{q+1}}+ |v|^q\big)|\xi|dx.
\end{eqnarray*}
It follows from H\"{o}ler inequalities, (\ref{eq:1.4.1}) and Sobolev embeddings that
\begin{eqnarray*}
&&\int_M|\langle H_u(x,u,v),\xi\rangle|dx\leq C\big(1+\|u\|_{s,2}^p+\|v\|_{1-s,2}^{\frac{p(q+1)}{p+1}}\big)\|\xi\|_s,\\
&&\int_M|\langle H_v(x,u,v),\zeta\rangle|dx\leq C\big(1+\|u\|_{s,2}^{\frac{q(p+1)}{q+1}}+\|v\|_{1-s,2}^q\big)
\|\zeta\|_{1-s}.
\end{eqnarray*}
Hence  the G\^ateaux derivative $D\mathcal{H}(z)=D\mathcal{H}(u,v)$ exists and
\begin{eqnarray}\label{e:A.1}
D\mathcal{H}(z)h&=&D\mathcal{H}(u,v)(\xi,\zeta)\nonumber\\
&=&\int_M\{\langle H_u(x,u(x),v(x)),\xi(x)\rangle +\langle H_v(x,u(x),v(x)),\zeta(x)\rangle\}dx\nonumber\\
&=&\int_MH_z(x,z(x))h(x)dx.
\end{eqnarray}

\noindent{\bf Step 2.}\; {\it $D\mathcal{H}:E_s\to E_s^\ast$ is continuous and thus $\mathcal{H}$
has continuous (Fr\'echet) derivative $\mathcal{H}'=D\mathcal{H}$. Consequently, the gradient $\nabla\mathcal{H}(z)$ at $z$ is given by
\begin{equation}\label{e:A.2}
\nabla\mathcal{H}(z)=\mathcal{D}_sH_z(\cdot,z).
\end{equation}}
Let
$$\hat{r}_1=\frac{2n}{n-2s},\quad\hat{r}_2=\frac{2n}{n-2(1-s)}.
$$
({\textbf H2}) implies that if $r_1\geq p+1$ and $r_2\geq q+1$ then there exists a constant $c_3>0$ such that
\begin{eqnarray}\label{e:A.3}
|H_u(x,u,v)|\leq c_3(1+|u|^{r_1-1}+|v|^{r_2(r_1-1)/r_1}),\label{A3:1}\\
|H_v(x,u,v)|\leq c_3(1+|u|^{r_1(r_2-1)/r_2}+|v|^{r_2-1}).\label{A3:2}
\end{eqnarray}
Obviously, the above two inqualities yield
\begin{eqnarray}\label{e:A.4}
|H_u(x,u,v)|^{r_1/(r_1-1)}\leq c_4(1+|u|^{r_1}+|v|^{r_2}),\label{A4:1}\\
|H_v(x,u,v)|^{r_2/(r_2-1)}\leq c_4(1+|u|^{r_1}+|v|^{r_2}).\label{A4:2}
\end{eqnarray}
By the Sobolev embedding we can find two constants $C_i$, $i=1,2$, such that
\begin{eqnarray*}
 &&\|u\|_{L^{2n/(n-2s)}}\leq C_1\|u\|_s\quad\forall u\in H^s(M,\Sigma M),\\
 &&\|v\|_{L^{2n/(n-2(1-s))}}\leq C_2\|v\|_{1-s}\quad\forall v\in H^{1-s}(M,\Sigma M).
\end{eqnarray*}
In particular, there is a constant $a_{r_1,r_2}>0$ such that for any $z=(u,v)\in E_s=H^s(M,\Sigma M)\times H^{1-s}(M,\Sigma M)$ we have
\begin{equation}\label{e:A.4+}
\|u\|_{L^{r_1}}+\|v\|_{L^{r_2}}\leq a_{r_1,r_2}\|z\|.
\end{equation}
Given $z=(u,v),h=(h_1,h_2)\in E_s$, combining (\ref{A3:1}) and (\ref{e:A.4}) gives
\begin{eqnarray}\label{e:A.5}
&&\int_M|H_u(x,z(x)+h(x))-H_u(x,z(x))|^{r_1/(r_1-1)}dx\nonumber
\\&\leq&c_5\int_M(1+|u|^{r_1}+|h_1|^{r_1}+|v|^{r_2}+|h_2|^{r_2}
)dx\nonumber \\
&\leq&c_5(1+\|z\|^{r_1}+\|z\|^{r_2}+\|h\|^{r_1}+\|h\|^{r_2}).
\end{eqnarray}
Similarly, from (\ref{A3:2}) and (\ref{e:A.4}) we arrive at
\begin{eqnarray}\label{e:A.6}
&&\int_M|H_v(x,z(x)+h(x))-H_v(x,z(x))|^{r_2/(r_2-1)}dx\nonumber
\\
&\leq&c_6(1+\|z\|^{r_1}+\|z\|^{r_2}+\|h\|^{r_1}+\|h\|^{r_2}).
\end{eqnarray}
By the definition and the H\"{o}lder inequality we get
\begin{eqnarray}\label{e:A.7}
&&\|D\mathcal{H}(z+h)- D\mathcal{H}(z)\|_{E_s^\ast}=
\sup_{\|g\|\le 1}\left| D\mathcal{H}(z+h)- D\mathcal{H}(z), g\rangle\right|\nonumber\\
&=&\sup_{\|g\|\le 1}\bigg[\int_M\big(|H_u(x,z(x)+h(x))-H_u(x,z(x))||g_1(x)|\nonumber\\
&&+|H_v(x,z(x)+h(x))-H_v(x,z(x))||g_2(x)|\big)dx\bigg]\nonumber\\
&\leq&\sup_{\|g\|\le 1}\bigg[\|g_1\|_{L^{r_1}}\times \left(\int_M|H_u(x,z(x)+h(x))-H_u(x,z(x))|^{r_1/(r_1-1)}dx\right)^{(r_1-1)/r_1}\nonumber\\
&&+\|g_2\|_{L^{r_2}}\times\left(\int_M|H_u(x,z(x)+h(x))-H_u(x,z(x))|^{r_2/(r_2-1)}dx\right)
^{(r_2-1)/r_2}\bigg]\nonumber\\
&\leq&a_{r_1,r_2}\bigg[ \left(\int_M|H_u(x,z(x)+h(x))-H_u(x,z(x))|^{r_1/(r_1-1)}dx\right)^{(r_1-1)/r_1}\nonumber\\
&&+\left(\int_M|H_v(x,z(x)+h(x))-H_v(x,z(x))|^{r_2/(r_2-1)}dx\right)
^{(r_2-1)/r_2}\bigg],
\end{eqnarray}
where we have used (\ref{e:A.4}) in the last inequality. We deduce from (\ref{A4:1}) and (\ref{e:A.5}) that the Nemytski map
$$
N_{H_u}: L^{r_1}(M, \Sigma M)\times L^{r_2}(M, \Sigma M)\to L^{\frac{r_1}{r_1-1}}(M, \Sigma M),\;
h\mapsto H_u(\cdot,h(\cdot))
$$
is continuous. Similarly, (\ref{A4:2}) and (\ref{e:A.6}) imply the Nemytski map
$$
N_{H_v}: L^{r_1}(M, \Sigma M)\times L^{r_2}(M, \Sigma M)\to L^{\frac{r_2}{r_2-1}}(M, \Sigma M),\;
h\mapsto H_v(\cdot,h(\cdot))
$$
is also continuous. Then by the Sobolev embedding we have
\begin{eqnarray}\label{e:A.8}
\int_M|H_u(x,z(x)+h(x))-H_u(x,z(x))|^{r_1/(r_1-1)}dx\to 0,\label{A8:1}\\
\int_M|H_v(x,z(x)+h(x))-H_v(x,z(x))|^{r_2/(r_2-1)}dx\to 0\label{A8:2}
\end{eqnarray}
as $\|h\|\to 0$. Combining (\ref{e:A.7}), (\ref{A8:1}) and (\ref{A8:2}) yields
$$
\|D\mathcal{H}(z+h)-D\mathcal{H}(z)\|_{\mathcal{L}(E_s, E_s^\ast)}\to 0\quad \hbox{as}\; \|h\|\to 0.
$$
\noindent{\bf Step 3.}\; {\it $\mathcal{H}'$ is a compact map}. Suppose that $(z_k)\subset E_s$ is bounded. Passing to a subsequence, one may assume that $z_k$ converges weakly in $E_s$ to $z=(u,v)$. By the Rellich embedding theorem and the continuousness of $\mathcal{H}'=D\mathcal{H}$ we obtain that
$\mathcal{H}^\prime(z_k)\to\mathcal{H}^\prime(z)$ as $k\to \infty$.\\
\noindent{\bf Step 4.}\; {\it $\mathcal{H}'$ is of class $C^1$}. Given $x\in M$, $z=(u,v),h=(h_1,h_2),g=(g_1,g_2)\in E_s$, and $t\in(-1,1)\backslash\{0\}$, the mean value theorem implies that
\begin{eqnarray}\label{e:A.9}
&&\mathcal{H}'(z+th)g-\mathcal{H}'(z)g\nonumber\\
&=&\int_M(H_z(x,z(x)+th(x))-H_z(x,z(x)))g(x)dx\nonumber\\
&=&\int_M\langle H_{zz}(x,z(x)+\theta th(x))th(x),g(x)\rangle dx
\end{eqnarray}
with $\theta=\theta(x,t,z(x),h(x))\in(0,1)$. Let $s_1:=\frac{n}{2s}$ and $s_2:=\frac{n}{2(1-s)}$. Then we have
{\setlength\abovedisplayskip{1pt}
\setlength\belowdisplayskip{1pt}
\begin{eqnarray}\label{e:A.11}
\frac{2}{\hat{r}_1}+\frac{1}{s_1}=1\label{holder:1}\quad\hbox{and}\quad
\frac{2}{\hat{r}_2}+\frac{1}{s_2}=1\label{holder:2}.
\end{eqnarray}
(\textbf{H3}) implies that there is a constant $a_1>0$ such that for any $z=(u,v)\in\Sigma_xM\oplus\Sigma_xM$ it holds that
{\setlength\abovedisplayskip{1pt}
\setlength\belowdisplayskip{1pt}
\begin{eqnarray}\label{Nemy:1}
&&|H_{uu}(x,z(x))|^{s_1}\leq a_1\big(1+|u(x)|^{p-1}\big)^{s_1},\nonumber\\
&&|H_{vv}(x,z(x))|^{s_2}\leq a_1\big(1+|v(x)|^{q-1}\big)^{s_2},\nonumber\\
&&|H_{uv}(x,z(x))|^{n}\leq a_1,\quad |H_{vu}(x,z(x))|^{n}\leq a_1.
\end{eqnarray}
Notice that $0<(p-1)s_1< \hat{r}_1$ and $0<(q-1)s_2< \hat{r}_2$, using the H\"{o}lder inequality gives
{\setlength\abovedisplayskip{1pt}
\setlength\belowdisplayskip{1pt}
\begin{eqnarray}\label{e:A.12}
&&\int_M |H_{uu}(x,z(x))+\theta th(x))-H_{uu}(x,z(x))|^{s_1}dx\nonumber\\
&\leq&a_2\int_M \big(1+|u(x)|^{s_1(p-1)}+|h_1(x)|^{s_1(p-1)}\big)dx\nonumber\\
&\leq&a_3\big(1+\|u\|_{L^{\hat{r}_1}}^{s_1(p-1)}+\|h_1\|_{L^{\hat{r}_1}}^{s_1(p-1)}\big),
\end{eqnarray}
{\setlength\abovedisplayskip{1pt}
\setlength\belowdisplayskip{1pt}
\begin{eqnarray}\label{e:A.13}
&&\int_M |H_{vv}(x,z(x))+\theta th(x))-H_{vv}(x,z(x))|^{s_2}dx\nonumber\\
&\leq&a_4\int_M \big(1+|v(x)|^{s_2(q-1)}+|h_2(x)|^{s_2(q-1)}\big)dx\nonumber\\
&\leq&a_5\big(1+\|v\|_{L^{\hat{r}_2}}^{s_2(q-1)}+\|h_2\|_{L^{\hat{r}_2}}^{s_2(q-1)}\big),
\end{eqnarray}
{\setlength\abovedisplayskip{1pt}
\setlength\belowdisplayskip{1pt}
\begin{eqnarray}\label{e:A.14}
\int_M |H_{uv}(x,z(x))+\theta th(x))-H_{uv}(x,z(x))|^{n}dx\leq a_6
\end{eqnarray}
and
{\setlength\abovedisplayskip{1pt}
\setlength\belowdisplayskip{1pt}
\begin{eqnarray}\label{e:A.15}
\int_M |H_{vu}(x,z(x))+\theta th(x))-H_{vu}(x,z(x))|^{n}dx\leq a_6
\end{eqnarray}
{\setlength\abovedisplayskip{1pt}
\setlength\belowdisplayskip{1pt}
It follows from (\ref{e:A.9}) and the H\"{o}lder inequality that
\begin{eqnarray*}
&&\left\|\frac{1}{t}(\mathcal{H}'(z+th)-\mathcal{H}'(z))-H_{zz}(\cdot,z)h\right\|_{E^\ast_s}\\
&\le&\sup_{\|g\|\le 1}
\left|\frac{1}{t}(\mathcal{H}'(z+th)g-\mathcal{H}'(z)g)-\langle H_{zz}(\cdot,z)h,g\rangle\right|\\
&\le&\sup_{\|g\|\le 1}\left|\int_M\langle H_{zz}(x,z(x)+\theta th(x))h(x)-H_{zz}(x,z(x))h(x),g(x)\rangle dx\right|\nonumber\\
&\leq& \sup_{\|g\|\le 1}\int_M\big[|H_{uu}(x,z(x)+\theta th(x))-H_{uu}(x,z(x))||h_1(x)||g_1(x)|\nonumber\\
&&+|H_{uv}(x,z(x)+\theta th(x))-H_{uv}(x,z(x))||h_2(x)||g_1(x)|\nonumber\\
&&+|H_{vu}(x,z(x)+\theta th(x))-H_{uv}(x,z(x))||h_1(x)||g_2(x)|\nonumber\\
&&+|H_{vv}(x,z(x)+\theta th(x))-H_{uv}(x,z(x))||h_2(x)||g_2(x)|\big]dx
\nonumber\\
&\leq& \sup_{\|g\|\le 1}\big[\|H_{uu}(\cdot,z+\theta th)-H_{uu}(\cdot,z)\|_{L^{s_1}}\|h_1\|_{L^{\hat{r}_1}}\|g_1\|_{L^{\hat{r}_1}}\nonumber\\
&&+\|H_{uv}(\cdot,z+\theta th)-H_{uv}(\cdot,z)\|_{L^n}\|h_2\|_{L^{\hat{r}_2}}\|g_1\|_{L^{\hat{r}_1}}\nonumber\\
&&+\|H_{vu}(\cdot,z+\theta th)-H_{vu}(\cdot,z)\|_{L^n}\|h_1\|_{L^{\hat{r}_1}}\|g_2\|_{L^{\hat{r}_2}}\nonumber\\
&&+\|H_{vv}(\cdot,z+\theta th)-H_{vv}(\cdot,z)\|_{L^{s_2}}\|h_2\|_{L^{\hat{r}_2}}\|g_2\|_{L^{\hat{r}_2}}\big].
\end{eqnarray*}
Combining (\ref{e:A.12})-(\ref{e:A.15}), the Sobolev inequality and the Lebesgue dominated convergence theorem implies that the right hand side of the last one of the above inequalities tends to $0$ as $t\to 0$. Therefore $\mathcal{H}':E_s\to E_s^\ast$ is G\^atuaux differentiable and
\begin{eqnarray}\label{e:A.16}
D\mathcal{H}'(z)h=H_{zz}(\cdot,z)h.
\end{eqnarray}
{\setlength\abovedisplayskip{1pt}
\setlength\belowdisplayskip{1pt}
For $z,h\in E_s$ it holds that
\begin{eqnarray}\label{e:A.17}
&&\|D\mathcal{H}'(z+h)-D\mathcal{H}'(z)\|_{\mathcal{L}(E_s, E_s^\ast)}=\sup_{\|g\|\le 1}
\|D\mathcal{H}'(z+h)g-D\mathcal{H}'(z)g\|_{E_s^\ast}\nonumber\\
&\le& \sup_{\|g\|\le 1}
\sup_{\|\kappa\|\le 1}|\langle D\mathcal{H}'(z+h)g-D\mathcal{H}'(z)g,\kappa\rangle|\nonumber\\
&=&\sup_{\|g\|\le 1}
\sup_{\|\kappa\|\le 1}\left|\int_M\langle H_{zz}(x,z(x)+h(x))g(x)-H_{zz}(x,z(x))g(x),\kappa(x)\rangle dx\right|\nonumber\\
&\leq& \sup_{\|g\|\le 1}\sup_{\|\kappa\|\le 1}\int_M\big[|H_{uu}(x,z(x))+h(x))-H_{uu}(x,z(x))||g_1(x)||\kappa_1(x)|\nonumber\\
&&+|H_{uv}(x,z(x))+h(x))-H_{uv}(x,z(x))||g_2(x)||\kappa_1(x)|\nonumber\\
&&+|H_{vu}(x,z(x))+h(x))-H_{uv}(x,z(x))||g_1(x)||\kappa_2(x)|\nonumber\\
&&+|H_{vv}(x,z(x))+h(x))-H_{uv}(x,z(x))||g_2(x)||\kappa_2(x)|\big]dx
\nonumber\\
&\leq&\sup_{\|g\|\le 1}\sup_{\|\kappa\|\le 1}
\big[\|H_{uu}(\cdot,z+h)-H_{uu}(\cdot,z)\|_{L^{s_1}}\|g_1\|_{L^{\hat{r}_1}}\|\kappa_1\|_{L^{\hat{r}_1}}\nonumber\\
&&+\|H_{uv}(\cdot,z+h)-H_{uv}(\cdot,z)\|_{L^n}\|g_2\|_{L^{\hat{r}_2}}\|\kappa_1\|_{L^{\hat{r}_1}}\nonumber\\
&&+\|H_{vu}(\cdot,z+h)-H_{vu}(\cdot,z)\|_{L^n}\|g_1\|_{L^{\hat{r}_1}}\|\kappa_2\|_{L^{\hat{r}_2}}\nonumber\\
&&+\|H_{vv}(\cdot,z+h)-H_{uv}(\cdot,z)\|_{L^{s_2}}\|g_2\|_{L^{\hat{r}_2}}\|\kappa_2\|_{L^{\hat{r}_2}}\big]\nonumber\\
&\leq&
a_{\hat{r}_1,\hat{r}_2}^2\big[\|H_{uu}(\cdot,z+h)-H_{uu}(\cdot,z)\|_{L^{s_1}}+\|H_{uv}(\cdot,z+h)-H_{uv}(\cdot,z)\|_{L^n}
\nonumber\\
&&+\|H_{vu}(\cdot,z+h)-H_{vu}(\cdot,z)\|_{L^n}+\|H_{vv}(\cdot,z+h)-H_{uv}(\cdot,z)\|_{L^{s_2}}\big],
\end{eqnarray}
where we have used (\ref{e:A.4}) in the last inequality. Obviouly, (\ref{Nemy:1}) implies that there is a constant $b_1>0$ such that
\begin{eqnarray}\label{Nemy:2}
&&|H_{uu}(x,z(x))|^{s_1}\leq b_1\big(1+|u(x)|^{\hat{r}_1}\big),\quad
|H_{vv}(x,z(x))|^{s_2}\leq b_1\big(1+|v(x)|^{\hat{r}_2}\big),\nonumber\\
&&|H_{uv}(x,z(x))|^{n}\leq b_1,\quad |H_{vu}(x,z(x))|^{n}\leq b_1.
\end{eqnarray}
Then the above inequalities imply that the Nemytski maps
\begin{eqnarray}
&&N_{H_{uu}}: L^{\hat{r}_1}(M,\Sigma M)\times L^{\hat{r}_2}(M,\Sigma M)\to L^{s_1}(M, {\rm End}(\Sigma M)),\;
h\mapsto H_{uu}(\cdot,h(\cdot)),\nonumber\\
&&N_{H_{vv}}: L^{\hat{r}_1}(M,\Sigma M)\times L^{\hat{r}_2}(M,\Sigma M)\to L^{s_2}(M, {\rm End}(\Sigma M)),\;
h\mapsto H_{vv}(\cdot,h(\cdot)),\nonumber\\
&&N_{H_{uv}}: L^{\hat{r}_1}(M,\Sigma M)\times L^{\hat{r}_2}(M,\Sigma M)\to L^{n}(M, {\rm End}(\Sigma M)),\;
h\mapsto H_{uv}(\cdot,h(\cdot))\quad\hbox{and}\nonumber\\
&&N_{H_{vu}}: L^{\hat{r}_1}(M,\Sigma M)\times L^{\hat{r}_2}(M,\Sigma M)\to L^{n}(M, {\rm End}(\Sigma M)),\;
h\mapsto H_{vu}(\cdot,h(\cdot))\nonumber
\end{eqnarray}
are all continuous. From this, (\ref{e:A.17}) and the Sobolev embedding we deduce that
$$
\|D\mathcal{H}'(z+h)-D\mathcal{H}'(z)\|_{\mathcal{L}(E_s, E_s^\ast)}\to 0
$$
as $\|h\|\to 0$. The desired result is proved.
\qed


{\bf Acknowledgement}. The authors want to thank Dr. Maalaoui for the fruitful discussions and the useful suggestions on the grading of Rabinowitz-Floer homology.



\end{document}